\renewcommand{\to}[1][]{\xrightarrow{\ #1\ }}
\newcommand{\leftidx}[3]{{\vphantom{#2}}#1#2#3}     
\newcommand{\lstar}[1]{\leftidx{^*}{#1}{}}
\newcommand{\ltightstar}[1]{\leftidx{^*}{\negthinspace#1}{}}
\newcommand{\forget}[1]{}  
\renewcommand{\theenumi}{\@roman\c@enumi}
\renewcommand{\phi}{\varphi}
\renewcommand{\epsilon}{\varepsilon}
\renewcommand{\theta}{\vartheta}
\newcommand{\sh}{\operatorname{sh}}
\newcommand{\llbracket}{[\negthinspace[}
\newcommand{\rrbracket}{]\negthinspace]}
\def\ZZ{{\mathbf Z}}
\def\NN{{\mathbf N}}
\def\AAA{{\mathbf A}}
\def\RR{{\mathbf R}}
\def\QQ{{\mathbf Q}}
\def\cJ{\mathcal{J}}
\def\cO{\mathcal{O}}
\def\cT{\mathcal{T}}
\def\cHT{\mathcal{HT}}
\def\cTpol{\mathcal{T}^{\rm pol}}
\def\cTser{\mathcal{T}^{\rm ser}}
\def\cTiso{\mathcal{T}^{\rm iso}}
\def\cHTpol{\mathcal{HT}^{\rm pol}}
\def\cHTser{\mathcal{HT}^{\rm ser}}
\def\cHTiso{\mathcal{HT}^{\rm iso}}
\def\cU{\mathcal{U}}
\def\fra{\mathfrak{a}}
\def\frb{\mathfrak{b}}
\def\frc{\mathfrak{c}}
\def\frm{\mathfrak{m}}
\def\frp{\mathfrak{p}}
\def\N{{\mathbf N}}
\def\A{{\mathbf A}}
\def\R{{\mathbf R}}
\def\Q{{\mathbf Q}}
\def\o{\circ}
\def\.{\cdot}
\def\({\Big{(}}
\def\){\Big{)}}
\def\^{\widehat}
\def\~{\widetilde}
\def\*{{}^*\!}
\def\[{\llbracket}
\def\]{\rrbracket}
\renewcommand{\and}{ \quad \text{and} \quad }
\newcommand{\fall}{ \quad \text{for all} \ \, }
\DeclareMathOperator{\codim}{codim}
 \DeclareMathOperator{\Spec}{Spec}
 \DeclareMathOperator{\lct}{lct}
 \DeclareMathOperator{\ord}{ord}
\DeclareMathOperator{\LCT}{LCT}
\newtheorem{lemma}{Lemma}[section]
\newtheorem{theorem}[lemma]{Theorem}
\newtheorem{corollary}[lemma]{Corollary}
\newtheorem{proposition}[lemma]{Proposition}
\newtheorem{conjecture}[lemma]{Conjecture}
\theoremstyle{definition}
\newtheorem{remark}[lemma]{Remark}
\theoremstyle{remark}
\newtheorem*{remark*}{Remark}
\newtheorem*{note*}{Note}
\begin{document}

\title{Limits of log canonical thresholds}

\author[T.~de Fernex]{Tommaso de Fernex}
\address{Department of Mathematics, University of Utah,
Salt Lake City, UT 48112, USA} \email{{\tt
defernex@math.utah.edu}}

\author[M. Musta\c{t}\u{a}]{Mircea~Musta\c{t}\u{a}}
\address{Department of Mathematics, University of Michigan,
Ann Arbor, MI 48109, USA} \email{{\tt mmustata@umich.edu}}

\markboth{T.~de Fernex and M.~Musta\c t\u a}{Limits of log canonical
thresholds}

\begin{abstract}
Let $\cT_n$ denote the set of log canonical thresholds of pairs
$(X,Y)$, with $X$ a nonsingular variety of dimension $n$, and $Y$ a
nonempty closed subscheme of $X$. Using non-standard methods, we
show that every limit of a decreasing sequence in $\cT_n$ lies in
$\cT_{n-1}$, proving in this setting a conjecture of Koll\'{a}r. We
also show that $\cT_n$ is closed in $\R$; in particular, every limit
of log canonical thresholds on smooth varieties of fixed dimension
is a rational number. As a consequence of this property, we see that
in order to check Shokurov's ACC Conjecture for all $\cT_n$, it is
enough to show that $1$ is not a point of accumulation from below of
any $\cT_n$. In a different direction, we interpret the ACC
Conjecture as a semi-continuity property for log canonical
thresholds of formal power series.

\bigskip

\noindent \textsc{R\'esum\'e}. 
Dans cet article, nous analysons les ensembles $\cT_n$ de seuils log canoniques de paires $(X,Y)$, ou 
$X$ est une vari\'{e}t\'{e} lisse de dimension $n$, et $Y$ est un 
sous-sch\'ema ferm\'{e} non-vide 
de $X$. En employant des m\'{e}thodes nonstandard, nous montrons que chaque limite
d'une suite strictement d\'{e}croissante de $\cT_n$ appartienne \`a l'ensemble $\cT_{n-1}$ (ce r\'esultat 
\`a \'et\'e conjectur\'e par J. Koll\'{a}r
dans ses travaux sur le sujet). Nous montrons \'{e}galement que l'ensemble $\cT_n$
est ferm\'{e} dans $\R$, et on d\'eduit que les valeurs adh\'erentes de
l'ensemble des
seuils log canoniques des pairs $(X,Y)$ sont rationnelles, si la dimension de $X$ est major\'ee. Une autre cons\'equence de 
nos r\'esultats concerne la Conjecture ACC de Shokurov pour les $\cT_n$. En effet, nous montrons qu'elle sera une cons\'equence de
l'\'enonc\'e suivant~: pour tout $n$, la valeur $1$ ne peut 
pas $\hat{\rm e}$tre obtenue comme limite d'une suite strictement 
croissante de nombres contenus dans $\cT_n$. 
Dans une autre perspective, nous interpr\'{e}tons la Conjecture ACC comme une
propri\'{e}t\'{e} de semicontinuit\'{e} de seuils log canoniqes  des s\'{e}ries formelles. 
\end{abstract}

\thanks{2000\,\emph{Mathematics Subject Classification}.
 Primary 14B05; Secondary 03H05, 14E30.
\newline The first author was partially supported by
NSF grant DMS 0548325, and the second author
  was partially supported by  NSF grant DMS 0500127 and
  a Packard Fellowship}
\keywords{Log canonical threshold, multiplier ideals, ultrafilter, resolution of singularities}

\maketitle

\section{Introduction}

Let $k$ be an algebraically closed field of characteristic zero. We
consider pairs of the form $(X,Y)$, where $X$ is a smooth variety
defined over $k$ and $Y \subseteq X$ is a nonempty closed subscheme.
For every integer $n \ge 0$, we are interested in the set of all
possible log canonical thresholds in dimension $n$
$$
\cT_n(k) := \{ \lct(X,Y) \mid \text{$X$ smooth over $k$ of dimension
$n$, $\emptyset \ne Y \subseteq X$} \} \subseteq \R,
$$
where we make the convention that $\lct(X,X) = 0$.
It is well-known that $\cT_n(k) \subseteq \Q$. Note that
$\cT_0(k) = \{0\}$ and $\cT_{n-1}(k) \subseteq \cT_n(k)$ for every
$n\geq 1$.

There are two fundamental questions regarding the accumulation
points (in $\R$) of these sets.

\begin{conjecture}\label{conj_shokurov}
For every $n$, the set $\cT_n(k)$ has no points of accumulation from
below.
\end{conjecture}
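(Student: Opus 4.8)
The plan is to argue by induction on $n$, in the local, non-standard spirit of the present paper. Since $\cT_0(k)=\{0\}$ has no accumulation point, the base case is trivial, and the two results recorded above — that $\cT_n(k)$ is closed in $\R$ and that limits of strictly decreasing sequences in $\cT_n(k)$ lie in $\cT_{n-1}(k)$ — reduce the conjecture to the single assertion that, for every $m$, the value $1$ is not the limit of a strictly increasing sequence in $\cT_m(k)$. To see why such a reduction holds below $1$: if $c<1$ were a point of accumulation from below of $\cT_n(k)$, then $c\in\cT_n(k)\subseteq\Q$ by closedness, so $c=a/b$ with $1\le a<b$; realizing the approximating thresholds as $c_i=\lct_0(\A^n,\fra_i)\nearrow c$ with the $\fra_i$ taken $\frm$-primary (so that $\lct_0$ is the global threshold), and introducing the monomial ideal $\frb=(y_1^b,\dots,y_{b-a}^b)$ in $b-a$ fresh variables — for which $\lct(\frb)=(b-a)/b=1-c$ — the additivity of the log canonical threshold under the sum of ideals in disjoint sets of variables gives $\lct_0(\A^{\,n+b-a},\,\fra_i+\frb)=c_i+(1-c)$. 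These numbers lie in $\cT_{n+b-a}(k)$, are all $<1$, and increase strictly to $1$, a contradiction. The rationality of $c$ is used essentially here, to keep $\frb$ in a fixed number of variables; an accumulation point above $1$ would be handled by the analogous device of cutting down by general hyperplanes rather than adding variables.

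It remains to exclude a strictly increasing sequence $c_i\nearrow 1$ with $c_i=\lct_0(\A^n,\fra_i)<1$, and here lies the real difficulty. The plan is to pass to a non-standard limit: over a non-principal ultrafilter on $\N$, form the non-standard extension $\lstar{k}$ of $k$ together with the ultraproduct of the (completed) ideals $\fra_i$, inside the ultraproduct of the local rings $k[[x_1,\dots,x_n]]$. The first point is that the non-standard log canonical threshold attached to this non-standard pair should equal the ultralimit of the $c_i$, namely $1$; this is the technical core, and is exactly the kind of statement the machinery of this paper is designed to establish, by way of a characterization of the threshold — through jet schemes, or through the Skoda-type stabilization of multiplier ideals — that descends along the ultraproduct. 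Granting this, the non-standard pair is log canonical but not Kawamata log terminal, hence carries a minimal log canonical center $W$ of positive codimension; adjunction along $W$ should then produce a log canonical pair of dimension $\le n-1$ over $\lstar{k}$ retaining a trace of the entire sequence $(c_i)$. One would specialize this data back to a pair over $k$ — a {\L}o\'s-type descent, legitimate since $k$ is algebraically closed of characteristic zero — and invoke the inductive hypothesis for $\cT_{n-1}(k)$ to conclude that the $c_i$ are eventually constant, the desired contradiction.

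The principal obstacle is this last, $c=1$, step, for two intertwined reasons. First, the divisors $E_i$ over $\A^n$ that compute the thresholds satisfy $\ord_{E_i}(\fra_i)\ge 1/(1-c_i)\rightarrow\infty$, so they cannot be drawn from any bounded family of log resolutions; transporting through the ultraproduct — and then faithfully back down to $k$ — not merely the numerical value of the threshold but the existence and birational geometry of a minimal log canonical center demands controlling a genuinely unbounded amount of data, and it is far from clear that a purely local, non-standard framework is strong enough. Second, the descent to $\cT_{n-1}(k)$ must be set up so that strict monotonicity upstairs cannot collapse to a stationary sequence downstairs for merely formal reasons; securing this rigidity is, in my estimation, the point at which the argument is most likely to stall. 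It seems plausible that a complete proof of Conjecture~\ref{conj_shokurov} will need to supplement this local picture with global, minimal-model-program-type boundedness, rather than rest on local non-standard methods alone.
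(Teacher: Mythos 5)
The statement you are ``proving'' is Conjecture~\ref{conj_shokurov} itself, which this paper explicitly leaves open: the paper's contribution is the reduction in Corollary~\ref{cor_thm_main1} and the reformulation in Proposition~\ref{prop:intro}, not a proof. So there is no in-paper proof to match your sketch against; the relevant question is whether your sketch closes the gap the authors leave, and it does not.

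The first half of your proposal is correct and is, in substance, the paper's own Corollary~\ref{cor_thm_main1}. Using closedness of $\cT_n$ to get $c\in\QQ$, then adding the ideal $\frb=(y_1^b,\dots,y_{b-a}^b)$ in fresh variables (the paper instead adds $\sum_i y_i^b$ to a hypersurface $f_m$ and invokes Thom--Sebastiani; both are correct) produces a sequence in a larger $\cT_N$ increasing strictly to $1$. This correctly reduces Conjecture~\ref{conj_shokurov} for all $n$ to the statement that $1$ is not an accumulation point from below of any $\cT_m$.

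The second half, as you yourself acknowledge, is a sketch that stalls, and the obstruction is worth naming sharply. The paper's mechanism for extracting a dimension drop in Theorem~\ref{thm_main2} is tied to the \emph{direction} of the monotonicity: for a strictly decreasing sequence $c_m\searrow c$ one has $\lct_0(\fra_m+\frm^d)\ge\lct_0(\fra_m)=c_m>c$ for all $m$ and $d$, hence $\lct(\widetilde\fra+\widetilde\frm^d)>\lct(\widetilde\fra)$ for all $d$, hence by Proposition~\ref{limit1} the locus $\LCT(\widetilde\fra)$ is positive-dimensional and Proposition~\ref{reduction_to_zero-dimensional} drops the dimension. For a strictly increasing sequence $c_m\nearrow c$ this chain of inequalities simply does not run: one only gets $\lct_0(\fra_m+\frm^d)\ge c_m$, which is weaker than $c$, and nothing forbids $\LCT(\widetilde\fra)$ from being the closed point alone. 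In that case the minimal log canonical center is $0$-dimensional, ``adjunction along $W$'' produces a pair on a point, and the induction in dimension never starts. Your observation that $\ord_{E_i}(\fra_i)\to\infty$ is a symptom of the same phenomenon, but the clean obstruction is the reversed sign of the strict inequality. So your proposal reproduces the paper's reduction and correctly identifies the hard step, but leaves the conjecture exactly as open as the paper does; as you suspect, the known resolutions of this last step require global MMP-boundedness input beyond the local, non-standard machinery developed here.
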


\begin{conjecture}\label{conj_kollar}
For every $n\geq 1$, the set of points of accumulation from above of
$\cT_n(k)$ is equal to $\cT_{n-1}(k)$.
\end{conjecture}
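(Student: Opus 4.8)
\noindent The plan is to prove the two inclusions separately.

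\smallskip

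\noindent\emph{The inclusion $\cT_{n-1}(k)\subseteq\{\text{points of accumulation from above of }\cT_n(k)\}$.} This I would do by hand. Since $0$ is the limit from above of $\lct(\A^n,V(\frm^j))=n/j\in\cT_n(k)$, one may assume $c>0$, and (using the description of $\cT_m$ by log canonical thresholds of ideals in $m$-dimensional regular complete local rings) write $c=\lct_0(\fra)$ for a nonzero ideal $\fra\subseteq k[[x_1,\dots,x_{n-1}]]$ contained in the maximal ideal. In $k[[x_1,\dots,x_{n-1},t]]$ put $\fra_j:=\fra\cdot k[[x_1,\dots,x_{n-1},t]]+(t^j)$; by the summation formula for log canonical thresholds (the Thom--Sebastiani type identity for ideals in disjoint sets of variables), $\lct_0(\fra_j)=\lct_0(\fra)+\lct_0((t^j))=c+\tfrac1j\in\cT_n(k)$. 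These strictly decrease to $c$, so $c$ is a point of accumulation from above of $\cT_n(k)$.

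\smallskip

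\noindent\emph{The inclusion $\{\text{points of accumulation from above of }\cT_n(k)\}\subseteq\cT_{n-1}(k)$.} This is the substance of the theorem. Take a strictly decreasing $(c_i)_{i\ge1}$ in $\cT_n(k)$ with limit $c$ and write $c_i=\lct_0(\fra_i)$ for nonzero $\fra_i\subseteq S:=k[[x_1,\dots,x_n]]$, the threshold being attained at the closed point, so $0\in\nklt(S,c_i\cdot V(\fra_i))$. Fix a nonprincipal ultrafilter $\cU$ on $\NN$ and form a \emph{generic limit} of the sequence $(\fra_i)$: a field extension $K/k$ and a nonzero ideal $\fra_\infty\subseteq R:=K[[x_1,\dots,x_n]]$ such that, for each $N$, the point of the (finite type over $k$) parameter space of ideals of $S/\frm_S^N$ determined by $\fra_\infty+\frm_R^N$ is the generic limit, with respect to $\cU$, of the points determined by the truncations $\fra_i+\frm_S^N$. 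The construction should simultaneously carry along log resolutions of $(\Spec S,V(\fra_i))$ computing the $\lct_0(\fra_i)$, so as to produce a log resolution of $(\Spec R,V(\fra_\infty))$ and a divisor $E$ over $\Spec R$ with $A_R(E)=c\cdot\ord_E(\fra_\infty)$, where $A_R(E)$ is the log discrepancy. This yields $\lct_0(\fra_\infty)\le c$; combined with the reverse inequality $\lct_0(\fra_\infty)\ge c$ (which follows from lower semicontinuity of the threshold under generization of ideals, since $\fra_\infty+\frm_R^N$ generizes $\fra_i+\frm_S^N$ for $\cU$-almost all $i$), one gets $\lct_0(\fra_\infty)=c$, and $E$ is a log canonical place of $(\Spec R,c\cdot V(\fra_\infty))$.

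\smallskip

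\noindent Granting this, I would finish by a dichotomy on the non-klt locus $W:=\nklt(R,c\cdot V(\fra_\infty))$, which is nonempty since $\fra_\infty\neq0$ and $\lct_0(\fra_\infty)=c<\infty$. If some irreducible component $Z$ of $W$ has $d:=\dim Z\ge1$, then localizing and completing at the generic point $\zeta$ of $Z$ produces a nonzero ideal on an $(n-d)$-dimensional regular complete local ring over $\kappa(\zeta)$ whose threshold is exactly $c$ ($\le c$ because $\zeta\in W$, and $\ge c$ because $c=\lct_0(\fra_\infty)$), so $c\in\cT_{n-d}(\kappa(\zeta))\subseteq\cT_{n-1}(\kappa(\zeta))$; and then $\cT_{n-1}(\kappa(\zeta))\subseteq\cT_{n-1}(\overline{\kappa(\zeta)})=\cT_{n-1}(k)$, because log canonical thresholds are unchanged by base field extension and $\cT_{n-1}$ is independent of the algebraically closed base field of characteristic zero. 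It remains to exclude the case $W=\{0\}$. Here the minimal log canonical centre of $(\Spec R,c\cdot V(\fra_\infty))$ is the closed point, so the place $E$ above has centre $\{0\}$; being built from $\fra_\infty$ by blow-ups supported over $\{0\}$, it transfers through the generic limit to a divisor $E_i$ over $\Spec S$ with centre $\{0\}$, with $A_S(E_i)=A_R(E)$ and $\ord_{E_i}(\fra_i)\ge\ord_E(\fra_\infty)$ for $\cU$-almost all $i$, whence $\lct_0(\fra_i)\le A_S(E_i)/\ord_{E_i}(\fra_i)\le c$ --- contradicting $c_i>c$. Thus $W$ is positive-dimensional, and $c\in\cT_{n-1}(k)$ as above.

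\smallskip

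\noindent The step I expect to be the main obstacle is setting up the generic limit so that $\lct_0(\fra_\infty)=c$ (equivalently, the inequality $\lct_0(\fra_\infty)\le c$). One cannot arrange the $\fra_i$ to have bounded colength --- bounded colength would make $\fra_\infty$ an $\frm_R$-primary ideal and force $c=\lct_0(\fra_\infty)=\lct_0(\fra_\infty+\frm_R^N)\ge\lct_0(\fra_i)=c_i$ by lower semicontinuity, a contradiction --- so there is no finite-determinacy shortcut, and the resolutions of the $(\Spec S,V(\fra_i))$ have unbounded complexity; the delicate part is to form a ``generic limit of resolutions'' controlling the limiting threshold from above and making the transfer of divisors in the last step precise. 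The reverse inequality $\lct_0(\fra_\infty)\ge c$, the localization at $\zeta$, and the descent from $K$ to $k$ are comparatively routine.
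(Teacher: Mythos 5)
Your high-level plan matches the paper's exactly: both directions, the use of a non-principal ultrafilter to form a limiting ideal in a power series ring over a non-standard extension of $k$ (the paper's $\widetilde{\fra}\subseteq\*k\llbracket x_1,\ldots,x_n\rrbracket$), the crucial identity that the lct of the limit ideal equals $c$, the deduction that the LCT locus is positive-dimensional, and the descent to dimension $n-1$ by localizing and completing at the generic point of a component. The $\geq c$ inequality via semicontinuity, the easy inclusion, and the final descent step are all as in the paper.

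The gap you flag at the end is exactly where the paper's method differs from your sketch. Your proposal of carrying along log resolutions of the $(\Spec S,V(\fra_i))$ through the ultraproduct does run into the unbounded-complexity problem you name; the paper never attempts it. Instead, Proposition~\ref{prop:lct=shadow-of-star-lct} works throughout with the \emph{bounded-degree truncations} $\fra_m+\frm^d$: the internal ideal $[\fra_m+\frm^d]$ is generated in degree $\le d$, and Lemma~\ref{lem:lct=star-lct:bounded-case} identifies its $\*\lct_0$ with the $\lct_0$ of the associated polynomial ideal over $\*k$. This lemma is proved via the jet-scheme characterization of the lct: the threshold is pinned down by the codimensions of finitely many jet-scheme fibers, each cut out by polynomials of bounded degree, and codimension transfers through the ultraproduct (Proposition~\ref{prop:transfer-of-codimension}, relying on van den Dries--Schmidt). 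Combined with the elementary estimate $|\lct(\fra)-\lct(\fra+\frm^d)|\le n/d$ from Corollary~\ref{limit2}, this sandwiches $\lct(\widetilde{\fra})$ and $\mathrm{sh}(\*\lct_0(A))$ to within $2n/d$ for all $d$, giving equality. The same truncation machinery also replaces your divisor-transfer argument for excluding $W=\{0\}$: for each $d$, $\lct(\widetilde{\fra}+\widetilde{\frm}^d)$ equals the $\cU$-constant value $\lambda_d$ of $\lct_0(\fra_m+\frm^d)\ge c_m>c$, so $\lct(\widetilde{\fra}+\widetilde{\frm}^d)>\lct(\widetilde{\fra})$ strictly for every $d$, and Proposition~\ref{limit1} then rules out $\{0\}$ being a component of $\LCT(\widetilde{\fra})$. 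Working with jet schemes and bounded-degree truncations, rather than with resolutions or valuations, is the missing technical ingredient that makes the ultraproduct argument close.
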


In particular, the two conjectures predict that every $\cT_n(k)$ is
closed, and that the set of its accumulation points is equal to
$\cT_{n-1}(k)$. In fact, both conjectures have stronger
formulations, in which $\cT_n(k)$ is defined under weaker conditions
on the singularities of the ambient variety $X$.

Conjecture~\ref{conj_shokurov}, known as the \emph{ACC Conjecture},
was formulated by Shokurov in \cite{Sho}, where it was proved for
$n=2$ (in the more general context, alluded to in the previous
paragraph). Alexeev proved it for $n=3$ in \cite{alexeev}. The main
interest in this conjecture comes from its implications to the
Minimal Model Program, more precisely, to the Termination of Flips
Conjecture (see \cite{birkar} for a precise statement).
Conjecture~\ref{conj_kollar} above was suggested by Koll\'{a}r in
\cite{kollar}. It was shown in \cite{mckernan} that
Conjecture~\ref{conj_kollar} follows if one assumes the Minimal
Model Program and a conjecture of Alexeev--Borisov--Borisov
 on the boundedness
of $\QQ$-Fano varieties. In particular, it is known to hold (in a
more general formulation) for $n\leq 3$.

\bigskip

It is not hard to see that the set $\cT_n(k)$ is independent of $k$
(see Propositions~\ref{lem:finitely-many-lct} and 
\ref{prop:all-the-same-subset} below).
 From now on we simply write $\cT_n$
instead of $\cT_n(k)$. Our main goal is to prove
Conjecture~\ref{conj_kollar}, as well as the fact that $\cT_n$ is
closed. We state our main results in the following order.

\begin{theorem}\label{thm_main1}
For every $n$, the set $\cT_n$ is closed in $\R$.
\end{theorem}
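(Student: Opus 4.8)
The plan is to realize an arbitrary limit $c=\lim_i c_i$ of a sequence $c_i\in\cT_n$ as a single log canonical threshold in dimension $n$, by constructing a ``generic limit'' of the defining ideals with the help of an ultrafilter. Let $c=\lim_i c_i$ with $c_i\in\cT_n$; we must show $c\in\cT_n$. If $c=0$ there is nothing to prove, since $0=\lct(X,X)\in\cT_n$, so assume $c>0$ and, after discarding finitely many terms, $c_i\ge c/2$ for all $i$. Writing $c_i=\lct(X_i,Y_i)$, the non-klt locus of $(X_i,c_iY_i)$ is a nonempty closed subset of $Y_i$, so $c_i=\lct_{p_i}(X_i,Y_i)$ for some point $p_i\in Y_i$; since the log canonical threshold is local and unchanged under completion, and $\widehat{\cO}_{X_i,p_i}\cong k[[x_1,\dots,x_n]]=:R$ with maximal ideal $\frm$, we may assume $c_i=\lct(\fra_i)$ for a nonzero ideal $\fra_i\subseteq\frm$ in $R$.

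The crucial reduction is a truncation statement: there is an integer $M=M(n,c)$, depending only on $n$ and on the lower bound $c/2$ for the thresholds, such that $\lct(\fra_i+\frm^M)=\lct(\fra_i)=c_i$ for all $i$. Replacing $\fra_i$ by $\fra_i+\frm^M$ we may assume $\frm^M\subseteq\fra_i$, so that $[\fra_i]$ is a $k$-point of the finite-type $k$-scheme $H$ parametrizing ideals $J$ with $\frm^M\subseteq J\subseteq\frm$ (a closed subscheme of a disjoint union of Grassmannians of subspaces of $R/\frm^M$), which carries a tautological family of ideals. Now fix a non-principal ultrafilter $\cU$ on the index set and let $W\subseteq H$ be the smallest closed subscheme with $\{i:[\fra_i]\in W\}\in\cU$; it exists by noetherianity, and it is irreducible because $\cU$ is an ultrafilter, so $W$ cannot be a union of two proper closed subsets without one of them already having the defining property. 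Let $\eta$ be the generic point of $W$ and let $\fra_\infty\subseteq\bar K[[x_1,\dots,x_n]]$ be the ideal obtained from the tautological ideal at $\eta$ after base change to an algebraic closure $\bar K$ of $\kappa(\eta)$, which is algebraically closed of characteristic zero; note $\frm^M\subseteq\fra_\infty$.

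The claim is that $\lct(\fra_\infty)=c$. By lower semicontinuity of the log canonical threshold in the tautological family over $H$, the function $h\mapsto\lct(\mathrm{taut}_h)$ is constructible with $\{\lct\ge\alpha\}$ open, so the locus $U\subseteq W$ where $\lct$ attains its generic value $\lct(\fra_\infty)$ is open and dense in $W$. By minimality of $W$ we get $\{i:[\fra_i]\in U\}\in\cU$, and for such $i$ one has $c_i=\lct(\fra_i)=\lct(\fra_\infty)$, using invariance of the threshold of a fibre under extension of the base field. Since $c_i\to c$, the $\cU$-limit of $(c_i)$ is $c$, and hence $\lct(\fra_\infty)=c$. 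Finally, since $\frm^M\subseteq\fra_\infty$, the subscheme $Y:=V(\fra_\infty)\subseteq\A^n_{\bar K}$ is nonempty and supported at the origin, so $\lct(\A^n_{\bar K},Y)=\lct(\fra_\infty)=c$; thus $c\in\cT_n(\bar K)$, and by the independence of $\cT_n$ from the base field (Propositions~\ref{lem:finitely-many-lct} and~\ref{prop:all-the-same-subset}) we conclude $c\in\cT_n$.

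The main obstacle, and the place requiring real work, is the uniform truncation statement $\lct(\fra+\frm^M)=\lct(\fra)$ for $M$ bounded in terms of $n$ and a positive lower bound on $\lct(\fra)$: this is what turns the log canonical threshold into a finite amount of data and makes the parameter scheme $H$ available (in the one-variable example $\fra=(x^a)$ one needs precisely $M\ge a=1/\lct(\fra)$, which is why the bound must involve a lower bound on the threshold rather than only $n$). I expect it to follow from the behaviour of multiplier ideals of $\fra+\frm^M$, via a summation formula comparing $\cJ\!\big((\fra+\frm^M)^\lambda\big)$ with the multiplier ideals of $\fra$ and of $\frm^M$. The remaining inputs — lower semicontinuity of $\lct$ in families and its invariance under extension of an algebraically closed field of characteristic zero — are standard.
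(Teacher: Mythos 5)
Your overall strategy is in the same spirit as the paper's — build a single ``limit ideal'' from the sequence $(\fra_i)$ via an ultrafilter and show that its log canonical threshold is $c$ — and the machinery of your second half (the $\cU$-minimal irreducible closed subset, lower semicontinuity of $\lct$ in families, invariance of $\lct$ under extension of algebraically closed fields) is sound. The fatal gap is exactly the step you flagged as ``requiring real work'': the claim that there is a single $M = M(n,c)$, depending only on $n$ and a positive lower bound $c/2$ on the thresholds, such that $\lct(\fra_i + \frm^M) = \lct(\fra_i)$ for all $i$. This is false. Take $n = 2$ and $\fra_m = (x_1^2 + x_2^{2m}) \subseteq k\llbracket x_1,x_2\rrbracket$, so that $\lct(\fra_m) = \tfrac12 + \tfrac1{2m}$; these are all $\geq \tfrac12$, and each $\fra_m$ even has log canonical threshold locus equal to the closed point. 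But for $d \leq 2m$ one has $x_2^{2m} \in \frm^d$, hence $\fra_m + \frm^d = (x_1^2) + \frm^d$, whose log canonical threshold is $\tfrac12 + \tfrac1d > \lct(\fra_m)$ when $d < 2m$; equality $\lct(\fra_m + \frm^d) = \lct(\fra_m)$ requires $d \geq 2m$, which is unbounded. A lower bound on $\lct$ simply does not bound the order to which one must read the ideal, so there is no finite-type parameter scheme $H$ catching the whole sequence after a fixed truncation. The multiplier-ideal summation formula you appeal to yields only the estimate $\lct(\fra + \frm^d) \leq \lct(\fra) + n/d$ of Corollary~\ref{limit2}, not exact equality at a bounded level.

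This is precisely the difficulty that the paper's passage to formal power series is designed to circumvent. Rather than truncate at a single level, the internal ideal $A = [\fra_i]$ is sent to an ideal $\widetilde{\fra}$ in $\ltightstar{k}\llbracket x_1,\ldots,x_n\rrbracket$, which records the truncations at every level $d$ at once, and the key computation (Proposition~\ref{prop:lct=shadow-of-star-lct}) shows $\sh\big(\ltightstar{\lct}_0(A)\big) = \lct(\widetilde{\fra})$ by applying Corollary~\ref{limit2} and Lemma~\ref{lem:lct=star-lct:bounded-case} simultaneously to $\widetilde{\fra} + \widetilde{\frm}^{\,d}$ and to the internal ideals $A + M^d$, and then letting $d \to \infty$. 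Lemma~\ref{lem:lct=star-lct:bounded-case} is the finite-type-parameter-scheme ingredient you had in mind, but used degree by degree rather than once. Your argument can be repaired along the lines of Koll\'ar's later reworking mentioned in the introduction — keeping a whole inverse system of truncation levels and generic limits — but a single uniform $M$ will not do.
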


Since $\cT_n \subseteq \Q \cap [0,n]$, this immediately implies
the following useful property.

\begin{corollary}\label{cor1}
Every limit of log canonical thresholds on smooth varieties of
bounded dimension is a rational number.
\end{corollary}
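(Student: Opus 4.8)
The plan is to deduce this directly from Theorem~\ref{thm_main1}, together with the elementary facts recorded above: that $\cT_{n-1} \subseteq \cT_n$ for every $n \ge 1$, and that $\cT_n \subseteq \Q \cap [0,n]$.

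First I would unwind what ``bounded dimension'' means. Suppose we are given a sequence $\bigl(\lct(X_i,Y_i)\bigr)_{i \ge 1}$ in which each $X_i$ is smooth of some dimension $n_i$, each $Y_i$ is a nonempty closed subscheme of $X_i$, and $n_i \le N$ for a fixed integer $N$ and all $i$. Then $\lct(X_i,Y_i) \in \cT_{n_i}$ for each $i$, so every term of the sequence lies in $\bigcup_{n=0}^{N}\cT_n$. Using the chain $\cT_0 \subseteq \cT_1 \subseteq \cdots \subseteq \cT_N$, this union is simply $\cT_N$; hence the entire sequence takes values in $\cT_N$.

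Now suppose the sequence converges to a real number $c$. Then $c$ belongs to the closure of $\cT_N$ in $\R$. By Theorem~\ref{thm_main1} that closure is $\cT_N$ itself, so $c \in \cT_N$; and since $\cT_N \subseteq \Q$ (this is well known, and recalled in the introduction), the number $c$ is rational, which is the assertion.

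There is essentially no obstacle here once Theorem~\ref{thm_main1} is available: the only point requiring a moment's care is the reduction from ``varieties of bounded dimension'' to the single set $\cT_N$, which is precisely what the nesting $\cT_{n-1}\subseteq\cT_n$ provides. In fact the argument can be stated without reference to limits at all --- the set of log canonical thresholds occurring in dimensions at most $N$ is exactly $\cT_N$, which by Theorem~\ref{thm_main1} is a closed subset of $\Q\cap[0,N]$, so it contains all of its accumulation points and each of them is a rational number.
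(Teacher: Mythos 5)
Your argument is correct and matches the paper's proof exactly: the corollary is deduced from Theorem~\ref{thm_main1} together with the inclusion $\cT_n \subseteq \Q \cap [0,n]$, with the nesting $\cT_{n-1}\subseteq\cT_n$ handling the reduction to a single dimension. The paper states this as an immediate consequence; your fuller unwinding is fine but not doing anything beyond what the paper intends.
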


\begin{theorem}\label{thm_main2}
For every $n\geq 1$, the set of points of accumulation from above of
$\cT_n$ is equal to $\cT_{n-1}$.
\end{theorem}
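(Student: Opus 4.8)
The plan is to establish the two inclusions separately. The easy direction is $\cT_{n-1} \subseteq \{$accumulation points from above of $\cT_n\}$: given $c = \lct(X,Y)$ with $\dim X = n-1$, one produces pairs $(X \times \AAA^1, Y_t)$ in dimension $n$ whose log canonical thresholds strictly decrease to $c$. Concretely, if $Y$ is defined by an ideal $\fra$ on $X$, take on $X \times \AAA^1$ the ideal $\fra \cdot \Oo + (t^m)$, whose log canonical threshold is $\min\{c, 1/m\}$ if $c < 1$ — more care is needed, but the standard computation (using that lct behaves well under adding a transverse smooth divisor with high multiplicity, or via the formula for lct of a sum of ideals in independent variables) yields a sequence in $\cT_n$ strictly above $c$ and converging to $c$. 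Since $\cT_{n-1} \subseteq \cT_n$ and, by Theorem~\ref{thm_main1}, $\cT_n$ is closed, any such $c$ is genuinely a limit from above of points of $\cT_n$, hence lies in the accumulation set.

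For the reverse inclusion, the heart of the matter: suppose $c_i \in \cT_n$ is a strictly decreasing sequence with $\lim c_i = c$; we must show $c \in \cT_{n-1}$. By Theorem~\ref{thm_main1}, $c \in \cT_n$ already, so $c = \lct(X,Y)$ for some smooth $n$-fold $X$ and nonempty $Y \subseteq X$; the point is to drop the dimension by one. I expect this to run through the same non-standard (ultraproduct) machinery that underlies Theorem~\ref{thm_main1}: realize each $c_i$ as $\lct(X_i, Y_i)$ with $\dim X_i = n$, pass to the ultraproduct $X^* = \prod X_i / \cU$ over a non-principal ultrafilter $\cU$, and use Łos-type transfer together with the boundedness/finiteness statements (Propositions~\ref{lem:finitely-many-lct} and~\ref{prop:all-the-same-subset}) to see that $c = \lct$ of a suitable pair built from the ultraproduct. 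The extra input needed beyond Theorem~\ref{thm_main1} is that the \emph{strict} decrease forces the log canonical threshold to be computed at a point where the pair degenerates — so that after localizing at that point and cutting by a general hyperplane section (or by passing to the tangent cone / an appropriate hypersurface through the center), one obtains an $(n-1)$-dimensional smooth variety carrying a subscheme with the same log canonical threshold $c$. The mechanism is: if lct were computed by a divisor whose center is the whole of $X$ (the generic point), then $c$ would be locally constant in the family, contradicting strict monotonicity; hence the center is a proper subvariety, and Inversion of Adjunction (restriction to a general hypersurface containing a general point of that center) drops the dimension without changing the threshold.

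The main obstacle, and where the non-standard methods really earn their keep, is controlling the ultraproduct geometrically: a priori $X^*$ is a scheme over a huge non-standard field and need not be of finite type, so one cannot directly speak of resolutions, discrepancies, or hyperplane sections. The technical core — presumably already set up for Theorem~\ref{thm_main1} — is to show that the relevant local invariant (the log canonical threshold, witnessed by finitely many numerical data from a log resolution) transfers across the ultraproduct, and that one may replace $X^*$ by an honest finite-type model over $k$ on which the computation is valid. Granting that apparatus, the dimension reduction via adjunction is then essentially formal: the strict monotonicity of the $c_i$ is exactly what prevents the minimal log canonical center from being all of $X^*$, and a general member of an appropriate linear system through that center provides the smooth $(n-1)$-dimensional variety realizing $c$, so $c \in \cT_{n-1}$. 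Combined with the easy inclusion and Theorem~\ref{thm_main1}, this proves that the set of accumulation points from above of $\cT_n$ is exactly $\cT_{n-1}$.
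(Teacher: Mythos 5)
The overall strategy matches the paper, but several of the intermediate steps are either wrong or missing the actual argument.

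On the easy inclusion, the formula you quote, $\lct\bigl(\fra\cdot\cO+(t^m)\bigr)=\min\{c,1/m\}$, is incorrect and in fact converges to $0$, not to $c$; the correct identity used in the paper (Proposition~1.20 of \cite{ELSV}) is $\lct\bigl(\fra+(t^m)\bigr)=c+\tfrac1m$, which is exactly what gives a strictly decreasing sequence in $\cT_n$ with limit $c$.

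On the hard inclusion, you correctly place the weight of the argument on showing that strict decrease forces a dimension drop, but the mechanism you sketch is not the one needed here and the crucial step is left unjustified. First, the dichotomy you invoke --- ``center equal to all of $X^*$'' versus ``proper subvariety'' --- is not meaningful: the LCT locus of a proper ideal is always a proper closed subset. The relevant dichotomy is whether the LCT locus of the formal power series ideal $\~\fra\subseteq{}^*k\llbracket x_1,\dots,x_n\rrbracket$ is zero-dimensional (only the closed point) or positive-dimensional. The paper shows it is positive-dimensional by a concrete computation: the internal ideal $[\fra_m+\frm^d]$ is generated in degree $\le d$, hence $\lct_0(\fra_m+\frm^d)$ equals some $\lambda_d$ for almost all $m$ (Proposition~\ref{lem:finitely-many-lct}), and by Proposition~\ref{prop:lct=shadow-of-star-lct} one gets $\lct(\~\fra+\~\frm^d)=\lambda_d$; since $\lambda_d\ge c_m>c=\lct(\~\fra)$ for those $m$, the inequality $\lct(\~\fra+\~\frm^d)>\lct(\~\fra)$ holds for every $d$. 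The second assertion of Proposition~\ref{limit1} (stability of the lct under adding a high power of the maximal ideal at a zero-dimensional LCT component) then rules out $\LCT(\~\fra)=\{\~\frm\}$. You assert the conclusion without this argument. Second, your dimension-reduction step via inversion of adjunction and a general hypersurface through a general point of the center is not what the paper does, and it is not obviously available in the present setting: the ambient object is $\operatorname{Spec}{}^*k\llbracket x_1,\dots,x_n\rrbracket$, not a finite-type scheme, so ``a general member of an appropriate linear system'' has no direct meaning. The paper instead takes the generic point $\xi$ of a positive-dimensional component of $\LCT(\~\fra)$, completes at $\xi$ (which produces a formal power series ring in $\dim\cO_{X,\xi}<n$ variables), truncates, and descends to a polynomial ring over an algebraically closed field; this is the content of Proposition~\ref{reduction_to_zero-dimensional}~ii), which is purpose-built precisely to avoid hypersurface sections and adjunction in the non-finite-type setting. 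So while your intuition about what strict monotonicity must force is sound, the route you propose to exploit it would need substantial further justification, and the actual proof in the paper uses a different, cleaner lemma at exactly this point.
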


We mention that there are versions of these results when instead of
arbitrary subschemes we consider only hypersurfaces. Suppose that
$\cHT_n\subseteq\cT_n$
is defined by considering only pairs $(X,Y)$, where $Y$ is
locally defined by one equation. In this case $\cHT_n=\cT_n\cap
[0,1]$, hence $\cHT_n$ is closed, too, and the set of points of
accumulation from above of $\cHT_n$ is equal to
$\cHT_{n-1}\smallsetminus\{1\}$.

Since $\cHT_n\subseteq\cT_n\subseteq n\cdot \cHT_n$, it follows that
Conjecture~\ref{conj_shokurov} holds if and only if for every $n$,
the set $\cHT_n$ has no points of accumulation from below. As a
consequence of Corollary~\ref{cor1}, we show that the conjecture can
be reduced to a special case.

\begin{corollary}\label{cor_thm_main1}
Conjecture~\ref{conj_shokurov} holds for every $n$ if and only if
the following special case holds: for every $n$, there is
$\delta_n\in (0,1)$ such that $\cHT_n\cap (\delta_n,1)=\emptyset$.
\end{corollary}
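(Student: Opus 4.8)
The plan is to prove the two implications separately, with essentially all of the content in the converse. The forward implication I would dispose of immediately: using $\cHT_m\subseteq\cT_m\subseteq m\cdot\cHT_m$ (recorded in the excerpt), Conjecture \ref{conj_shokurov} is equivalent to the statement that no $\cHT_m$ has a point of accumulation from below; since $1\in\cHT_m$ for $m\ge 1$ (it is the log canonical threshold of a smooth hypersurface), this in particular forbids $1$ from being approached from below within $\cHT_m$, which is exactly the existence of a $\delta_m\in(0,1)$ with $\cHT_m\cap(\delta_m,1)=\emptyset$; and $\cHT_0=\{0\}$ trivially admits such a $\delta_0$.

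For the converse I would argue by contradiction: assume the gap hypothesis holds for every $m$, but some $\cHT_n$ has a point of accumulation from below, witnessed by a strictly increasing sequence $(c_i)$ in $\cHT_n$ with limit $c$. Then $c>0$, and since $\cHT_n\subseteq[0,1]$ we have $c\le 1$. The crucial input is Corollary \ref{cor1}, which forces $c$ to be \emph{rational}: write $c=p/q$ with $1\le p\le q$ integers. The idea is then to transport this accumulation point, which lies strictly below $1$, into an accumulation point equal to $1$ in a larger but fixed dimension, contradicting the gap there.

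To carry this out I would, for each $i$, represent $c_i$ as $\lct_0(f_i)$ for a polynomial $f_i\in k[x_1,\dots,x_n]$ (the log canonical threshold of a hypersurface pair is attained at a closed point, and it depends only on a sufficiently high-order truncation of a local equation, so a polynomial representative exists), and then set $F_i:=f_i(x_1,\dots,x_n)+y_1^{\,q}+\cdots+y_{q-p}^{\,q}$ in $k[x_1,\dots,x_n,y_1,\dots,y_{q-p}]$ (with $F_i=f_i$ if $p=q$). By the Thom--Sebastiani formula $\lct_0(f(x)+g(y))=\min\{1,\lct_0(f(x))+\lct_0(g(y))\}$ for a sum of functions in separate variables, together with $\lct_0(y_1^q+\cdots+y_{q-p}^q)=(q-p)/q$, one gets $\lct_0(F_i)=\min\{1,\,c_i+(q-p)/q\}$; since $c_i<c=p/q$, this equals $d_i:=c_i+(q-p)/q<1$. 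Restricting to a small enough neighbourhood of the origin in $\A^{n+q-p}$ (so that no worse singular point interferes) shows $d_i\in\cHT_{n+q-p}$. But the $d_i$ are strictly increasing, lie in $(0,1)$, and converge to $c+(q-p)/q=1$, so $1$ is a point of accumulation from below of $\cHT_{n+q-p}$, contradicting $\cHT_{n+q-p}\cap(\delta_{n+q-p},1)=\emptyset$. Translating back through $\cHT_m\subseteq\cT_m\subseteq m\cdot\cHT_m$ then yields Conjecture \ref{conj_shokurov}.

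The part that is not purely formal is the Thom--Sebastiani step: it relies on the exact additivity identity above (not merely a one-sided bound), together with the routine but necessary localization to realize $d_i$ genuinely as a member of $\cHT_{n+q-p}$ rather than only as a local invariant of a germ. Everything else — the rationality of $c$ from Corollary \ref{cor1}, the passage between $\cT_m$ and $\cHT_m$, and the monotonicity bookkeeping for the sequence $(d_i)$ — is straightforward.
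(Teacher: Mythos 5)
Your proposal is correct and takes essentially the same route as the paper: reduce to $\cHT_n$ via the sandwich $\cHT_n\subseteq\cT_n\subseteq n\cdot\cHT_n$, use Theorem~\ref{thm_main1} (equivalently Corollary~\ref{cor1}) to force the limit $c$ of a strictly increasing sequence in $\cHT_n$ to be a rational $a/b\le 1$, and then apply the Thom--Sebastiani identity to $f_m + \sum_{i=1}^{b-a} y_i^b$ to push the accumulation point up to $1$ in dimension $n+b-a$. The only inessential difference is that you phrase the converse by contradiction and worry about localizing to a neighbourhood of the origin, which is unnecessary since $\cHT_m=\cHTpol_m$ is by definition the set of local thresholds $\lct_0(f)$.
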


In a different direction, we investigate the ACC Conjecture using
the Zariski topology on the set of formal power series.

\begin{proposition}\label{prop:intro}
Conjecture~\ref{conj_shokurov}
holds for $n$ if and only if, assuming that $k$ is uncountable,
for every $c$ there is an integer
$N(n,c)$ such that the condition for $f$ to lie in
$$
{\mathcal R}_{n}(c):=\{ f \in k\llbracket x_1,\dots,x_n \rrbracket
\mid f(0)=0, \lct(f) \geq c \}
$$
depends only on the truncation of $f$ up to degree $N(n,c)$.
\end{proposition}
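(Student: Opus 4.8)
The plan is to show that the two conditions are equivalent by reformulating both in terms of jets, using the fact that $\cT_n$ is closed (Theorem~\ref{thm_main1}) to bridge the two. First I would dispose of the easy direction. Suppose $\mathcal R_n(c)$ is a ``cylinder'' for every $c$, that is, membership depends only on the truncation up to some degree $N(n,c)$. Then each $\mathcal R_n(c)$ is a closed subset of $k\llbracket x_1,\dots,x_n\rrbracket$ that is pulled back from the finite-dimensional space of truncations $k[x_1,\dots,x_n]/\frm^{N(n,c)+1}$. If Conjecture~\ref{conj_shokurov} failed for $n$, there would be log canonical thresholds $c_i = \lct(f_i)$ strictly increasing to some $c$ with $c_i < c$; I would want to contradict the cylinder property by noting that for a generic choice (using $k$ uncountable) one can in fact realize $\lct = c_i$ by a power series whose truncation up to any fixed degree is as prescribed but whose lct stays $< c$, whereas a nearby power series (same truncation up to degree $N(n,c')$ for suitable $c' \le c$) should have lct $\ge c'$. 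Making this precise is one of the technical points: I would use semicontinuity of the lct together with the fact that lct only depends on the ideal generated by $f$ and its partials (so on a finite truncation, once the order is bounded).

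For the substantive direction, assume Conjecture~\ref{conj_shokurov} for $n$ and fix $c$. By the ACC property, the set $\cT_n \cap [c,\infty)$ — equivalently $\cHT_n \cap [c,1]$ for the hypersurface version, which by $\cHT_n = \cT_n \cap [0,1]$ controls $\mathcal R_n(c)$ — has a minimum, call it $c^* \ge c$, and there is a gap: $\cT_n \cap [c, c^*) = \emptyset$. So for $f$ with $f(0)=0$, the condition $\lct(f) \ge c$ is the same as $\lct(f) \ge c^*$, and I want to show the latter is determined by a bounded truncation of $f$. The key point is a boundedness statement: there is an integer $N = N(n,c)$ such that for every $f$ with $\ord(f) \le$ some bound, $\lct(f) \ge c^*$ if and only if $\lct(f_{\le N}) \ge c^*$, where $f_{\le N}$ is the truncation. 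One gets a uniform bound on the order because if $\ord_0(f)$ is large then $\lct(f) \le n/\ord_0(f) < c$, so only finitely many orders are relevant; for each such order, the lct is computed from the multiplier ideals / a log resolution of the ideal $(f)$, and I would invoke a boundedness result (or Propositions~\ref{lem:finitely-many-lct}/\ref{prop:all-the-same-subset}, giving finiteness of lct's in bounded settings) to conclude that lct is constant on a cylinder. Concretely, I expect to argue: if $f$ and $g$ have the same truncation up to high enough degree $N$, then $(f) + \frm^{N+1} = (g) + \frm^{N+1}$, and for $N$ large relative to the order, the multiplier ideals $\cJ(\lambda \cdot (f))$ and $\cJ(\lambda \cdot (g))$ agree for all $\lambda$ near $c^*$, hence $\lct(f) = \lct(g)$ in the range that matters.

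The main obstacle will be establishing this uniform truncation bound $N(n,c)$ — i.e.\ that ``lct $\ge c^*$'' is genuinely a cylinder condition and not just a countable union of cylinders. The subtlety is that a priori a log resolution of $(f)$ could require more and more blow-ups as $f$ varies over power series of a fixed order, so one needs a bound on the complexity of resolutions that is uniform over the (infinite-dimensional, but fixed-order) family. I would handle this by reducing mod a high power of $\frm$: since the order is bounded by $n/c$, the ideal $(f)$ is, up to adding $\frm^{N+1}$, determined by finitely much data, and I would either appeal to generic smoothness / constructibility of lct in families (the sets $\{c_i\}$ being locally constant on a stratification of a finite-dimensional parameter space of truncations) or, more efficiently, invoke precisely the closedness of $\cT_n$ from Theorem~\ref{thm_main1} together with the ACC hypothesis: the gap below $c^*$ forces the ``threshold'' behavior to be insensitive to the higher-order terms, because perturbing them can change lct only by landing in $\cT_n$, which near $c^*$ is discrete from below. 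This is where the nonstandard/ultrafilter machinery of the paper is likely the cleanest tool: an ultraproduct of power series whose truncations agree up to every finite degree but not beyond would have to have lct equal (by transfer) to the common truncated value for any given precision, and the ACC gap pins it down exactly. I would make that the backbone of the argument.
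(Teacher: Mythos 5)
Your high-level plan is the right one, and several observations match the paper's argument: you correctly note that, assuming ACC in dimension $n$ and using the closedness of $\cT_n$ (Theorem~\ref{thm_main1}), there is a gap $(c-\epsilon,c)\cap\cHT_n=\emptyset$, and that $\lct(f)\ge c$ forces a uniform bound on $\ord(f)$. But both directions of your sketch have concrete gaps, and neither needs the machinery you reach for (bounds on resolution complexity, constructibility in families, ultrafilters --- the paper does not in fact use nonstandard methods for this proposition).

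The missing tool in the ``ACC implies cylinder'' direction is the uniform Lipschitz estimate of Corollary~\ref{limit2}: $|\lct(f)-\lct_0(f_{\le d})|\le n/(d+1)$ for every $f\in\frm$. With the gap in hand, choosing $d$ with $n/(d+1)<\epsilon/2$ gives immediately that $\lct(f)\ge c$ if and only if $\lct_0(f_{\le d})>c-\epsilon/2$, a condition visibly depending only on $f_{\le d}$; no control on resolutions or on families of truncations is needed. Your talk of multiplier ideals of $(f)$ and $(g)$ agreeing ``for all $\lambda$ near $c^*$'' is circling this estimate without producing it. For the converse (``cylinder implies no accumulation from below'') your sketch does not identify where uncountability enters, and the phrase ``for a generic choice'' does no work. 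The actual mechanism is Batyrev's lemma (Remark~\ref{rem_batyrev}): over an uncountable field, a nested countable family of nonempty constructible cylinders has nonempty intersection. Using Corollary~\ref{limit2} together with semicontinuity of $\lct$ one writes each $\mathcal{R}_n(c')$ as a countable intersection of constructible cylinders; if $c_m\nearrow c$ strictly then $\mathcal{R}_n(c)=\bigcap_m\mathcal{R}_n(c_m)$, and if $\mathcal{R}_n(c)$ were a cylinder, Batyrev's lemma would force $\mathcal{R}_n(c)=\mathcal{R}_n(c_m)$ for some $m$, contradicting the existence (by Corollary~\ref{cor:all-the-same-subset}) of $h_m\in\frm$ with $\lct(h_m)=c_m<c$. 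Without such a compactness-type lemma, your attempted contradiction from a failure of ACC has no traction against the cylinder hypothesis.
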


In fact, we will see that the set ${\mathcal R}_n(c)$ has the
property in the above proposition if and only if it is open inside the
maximal ideal with respect to the Zariski topology on $k\llbracket
x_1,\ldots,x_n\rrbracket$ (see \S 5). Furthermore,
Corollary~\ref{cor_thm_main1} implies that in order to prove the ACC
Conjecture for every $n$, it is enough to prove the assertion in the
proposition only for the sets ${\mathcal R}_{n}(1)$.

The main ingredient in the proof of the above theorems is given by
non-standard methods.
This approach is very natural in this context, when one wants
to encode a sequence of polynomials (or ideals) in a single object.
In our case, we start with a sequence of ideals
$\fra_m \subset k[x_1,\dots,x_n]$ whose log canonical thresholds
converge to some $c \in \RR$.
Ultrafilter constructions give non-standard extensions of our algebraic structures:
we get a field $\ltightstar{k}$ containing $k$ and a ring
$\ltightstar{(k[x_1,\dots,x_n])}$ containing $k[x_1,\dots,x_n]$.
Moreover, there is a truncation map from $\ltightstar{(k[x_1,\dots,x_n])}$
to the formal power series ring $\ltightstar{k}\llbracket x_1,\ldots,x_n\rrbracket$.
Our sequence of ideals determines an ideal
$[\fra_m] \subset \ltightstar{(k[x_1,\dots,x_n])}$ whose image in
$\ltightstar{k}\llbracket x_1,\ldots,x_n\rrbracket$
we denote by $\~\fra$. Our key result is that $\lct(\~\fra) = c$.
After possibly replacing $\ltightstar{k}$ by a larger field $K$, we obtain an
ideal in a polynomial ring over $K$ whose log canonical
threshold is $c$. Since $\cT_n(k)$ is independent of $k$, we get the
conclusion of Theorem~\ref{thm_main1}.
If the sequence $\{c_m\}_m$ is
strictly decreasing, then we conclude that the limit is actually
a log canonical threshold in a smaller dimension via a more careful
analysis of the singularities of the ideal $\widetilde{\fra}$.
We mention that
non-standard methods were also employed in \cite{BMS} to study the
sets of $F$-pure thresholds of hypersurfaces in positive
characteristic (though in that case one could only obtain the
analogue of Theorem~\ref{thm_main1} above).

As it should be apparent from the above sketch of the proof, we need
to work with log canonical thresholds of ideals in formal power
series rings. The familiar framework for studying such invariants is
that of schemes of finite type over a field. However, since
resolutions of singularities are available for arbitrary excellent
schemes (see \cite{Temkin}), it is not hard to extend the theory of
log canonical thresholds and multiplier ideals to such a general
setting. We explain this extension in detail in the next section.

In \S 3 we discuss some elementary properties of the sets $\cT_n$,
in particular the independence of the base field. The proofs of the
main results are contained in \S 4. In \S 5 we make
some comments on Conjecture~\ref{conj_shokurov}, proving
Corollary~\ref{cor_thm_main1} and Proposition~\ref{prop:intro}. The
proposition follows from a basic property of cylinders in the ring
of formal power series. This interpretation of the ACC Conjecture
illustrates once more that formal power series provide the
natural setting when considering sequences of log canonical
thresholds.

After the first version of this article was made public,
J\'anos Koll\'ar gave a new proof of the above theorems
using an infinite sequence of approximations and field extensions
in place of non-standard methods (at the core the two proofs are the same).
In fact, making use of results from \cite{BCHM},
he obtains a stronger version of Theorem~\ref{thm_main2},
showing that all accumulation points of the set of
log canonical thresholds
in dimension $n$ (not just the limits of decreasing sequences)
are log canonical thresholds in dimension $n-1$.

\subsection{Acknowledgments}
The second author is indebted to Caucher Birkar who introduced him
to non-standard constructions. We are grateful to Jarek
W{\l}odarczyk for various discussions on resolution of
singularities, and to Matthias Aschenbrenner and Pierre Milman for
pointing out some key references. We would also like to acknowledge
the comments and suggestions we received from an
anonymous referee.
Our project began at the AIM
workshop on ``Numerical invariants of singularities and
higher-dimensional algebraic varieties''. We would like to thank AIM
for providing a stimulating environment.

\section{Multiplier ideals on excellent schemes}

Our goal in this section is to develop the theory of multiplier
ideals and log canonical thresholds for ideals on a regular
excellent scheme of characteristic zero. We will apply this theory
when the ambient space is either a smooth scheme of finite type over
a field or the spectrum of a formal power series ring over a field.
All our schemes have characteristic zero, that is, they are schemes
over ${\rm Spec}(\QQ)$.

Recall that a Noetherian ring $A$ is \emph{excellent} if the
following hold:
\begin{enumerate}
\item[1)] For every prime ideal $\frp$ in $A$, the completion
morphism $A_{\frp}\to \widehat{A_{\frp}}$  has geometrically regular
fibers.
\item[2)] For every $A$-algebra of finite type $B$, the regular
locus of ${\rm Spec}(B)$ is open.
\item[3)] $A$ is universally catenary.
\end{enumerate}
For the basics on excellent rings we refer to \cite{Matsumura}.
 It is known that every algebra of finite type
over an excellent ring is excellent, and that all complete
Noetherian local rings are excellent. A Noetherian scheme $X$ is
\emph{excellent} if it admits an open cover by spectra of excellent
rings.

The key ingredient in building the theory of multiplier ideals is
the existence of log resolutions of singularities. It was shown in
\cite{Temkin} that Hironaka's Theorem giving existence of
resolutions for integral schemes of finite type over a field
implies the following general statement (in fact, the result in
\emph{loc. cit.} holds for quasi-excellent schemes, but we do not
need this generality).

\begin{theorem}[\cite{Temkin}]\label{thm1}
Let $X$ be an integral, excellent scheme of characteristic zero, and
let $Y\hookrightarrow X$ be a proper closed subscheme. There is a
proper, birational morphism $f\colon X'\to X$ with the following
properties:
\begin{enumerate}
\item[i)] $X'$ is a regular scheme.
\item[ii)] The inverse image $f^{-1}(Y)$ is a divisor with simple
normal crossings.
\end{enumerate}
Moreover, if $U$ is an open subset of $X$ such that $U$ is regular
and $Y\vert_U$ is a divisor with simple normal crossings, then we can
take $f$ to be an isomorphism over $U$.
\end{theorem}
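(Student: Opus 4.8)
The plan is to deduce the theorem from Hironaka's theorem for integral schemes of finite type over a field of characteristic zero, used in its modern refined form: there is a \emph{canonical} strong desingularization and a \emph{canonical} principalization of a coherent ideal sheaf on a regular scheme, each realized by a sequence of blow-ups with regular centres and each \emph{functorial} in the data with respect to smooth morphisms (Bierstone--Milman, Villamayor, W{\l}odarczyk, Koll\'ar). Granting these in the finite-type case, one makes two reductions: first, from an arbitrary excellent $X$ to the case in which the ambient ring is complete local; second, from that case back to the finite-type case via the Cohen structure theorem. Once strong desingularization produces a proper birational morphism $X_0\rightarrow X$ with $X_0$ regular and one principalizes the pullback of $\cI_Y$ to $X_0$, the composition is the asserted morphism $f\colon X'\rightarrow X$: the statement that $f^{-1}(Y)$ is a divisor with simple normal crossings is precisely the output of principalization, and the last clause---that $f$ may be taken to be an isomorphism over a regular open set $U$ on which $Y|_U$ is already simple normal crossings---is automatic, because a functorial algorithm performs no blow-up over such a $U$. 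This is, in essence, the strategy of \cite{Temkin}.

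For the first reduction, fix a point $x\in X$. Since $\cO_{X,x}$ is excellent, the completion morphism $\Spec(\widehat{\cO_{X,x}})\rightarrow\Spec(\cO_{X,x})$ is faithfully flat with geometrically regular fibres---by the very definition of excellence---hence is a regular morphism; and by the Cohen structure theorem (equicharacteristic zero) $\widehat{\cO_{X,x}}$ has the form $k\llbracket t_1,\dots,t_d\rrbracket/J$ for some field $k$ of characteristic zero. So assume one can resolve schemes of the shape $\Spec(k\llbracket\underline t\rrbracket/J)$ together with a closed subscheme. Resolving $\Spec(\widehat{\cO_{X,x}})$ and the pullback of $Y$, and then using Artin approximation to \emph{spread out} the finitely many blow-up centres to an \'etale neighbourhood of $x$ in $X$---where, the completion being regular, the total space stays regular and the simple-normal-crossings condition survives---one gets a resolution over a neighbourhood of $x$; functoriality of the canonical algorithm then forces these local resolutions to agree on overlaps, so they glue to a global morphism $f$. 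This reduces the theorem to the power-series-quotient case. For the second reduction, write $J=(g_1,\dots,g_r)$; by a standard consequence of the Artin--Rees lemma there is an integer $N$ such that the ideal $J_N\subseteq k[\underline t]$ obtained by truncating each $g_i$ in degree $<N$ already generates $J$ inside $k\llbracket\underline t\rrbracket$ (and similarly for the subscheme). Now $\Spec(k[\underline t]/J_N)$ is of finite type over $k$, and resolving it and then localizing and completing at $(\underline t)$---once more invoking compatibility of the canonical resolution with the resulting regular morphism---yields a resolution of $\Spec(k\llbracket\underline t\rrbracket/J)$. This closes the loop back to Hironaka's theorem.

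The main obstacle is the interface between the two reductions. The resolution furnished by the finite-type theory is functorial for \emph{smooth} morphisms, whereas the descent in the first reduction requires compatibility with the completion morphism, which is merely \emph{regular}; closing this gap is the technical heart of the argument, and it is where N\'eron--Popescu desingularization enters. A regular morphism of Noetherian rings is a filtered colimit of smooth ones, so one must show that the invariants driving the blow-up sequence are insensitive to such colimits---equivalently, that the canonical resolution commutes with regular base change. A second, related point is that $k\llbracket\underline t\rrbracket$ is itself not of finite type over $k$; this is circumvented by resolving the finite-type truncation $\Spec(k[\underline t]/J_N)$ and transporting the result along the completion morphism, which again rests on the same regular-functoriality, supplemented by Artin-type approximation over the excellent henselian local rings in play. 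Once these functoriality and approximation statements are in place, assembling the global $f$ and verifying its stated properties is routine.
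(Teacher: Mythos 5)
The paper does not prove this statement: it is quoted directly from \cite{Temkin}, so there is no internal proof to compare against. Judged on its own terms, your sketch captures the broad shape of Temkin's argument — localize, complete, invoke Cohen's structure theorem, algebraize, apply Hironaka for finite type over a field, and descend the resolution through functoriality for regular morphisms, with N\'eron--Popescu desingularization as the technical crux. That part is a fair summary of the strategy the paper is implicitly relying on.

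However, your ``second reduction'' contains a genuine error. You assert that if $J=(g_1,\dots,g_r)\subseteq k\llbracket t_1,\dots,t_n\rrbracket$, then for $N$ large the ideal generated by the polynomial truncations $g_i^{<N}$ already equals $J$ inside $k\llbracket t\rrbracket$, attributing this to Artin--Rees. This is false. Take $n=2$ and $J=(t_1-f(t_2))$ with $f\in t_2\, k\llbracket t_2\rrbracket$ transcendental over $k(t_2)$ (e.g.\ $f=\sum_{m\ge 1}t_2^{m!}$). If $J$ were generated by a polynomial $p$, writing $t_1-f=p\cdot u$ with $u$ a unit and substituting $t_1\mapsto f$ would give $p(f,t_2)=0$, contradicting transcendence; in particular no truncation of $t_1-f$ generates $J$, for any $N$. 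So the ideal $J$ may fail to descend to the polynomial ring at all, and the loop back to Hironaka's theorem is not closed by truncation. The correct mechanism is Popescu's theorem itself, not just as a ``commutes with regular base change'' device: since $k[t]_{(t)}\to k\llbracket t\rrbracket$ is a regular homomorphism, Popescu writes it as a filtered colimit of smooth, finite-type homomorphisms, so the finitely generated ideal $J$ descends to a smooth $k[t]_{(t)}$-algebra that \emph{is} essentially of finite type over $k$, where Hironaka applies; one then transports the resulting resolution back along the (regular) structure map. You relegate Popescu to the functoriality role, but in Temkin's argument it also performs precisely the algebraization that your truncation step cannot.
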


From now on we assume that \emph{all} our schemes are excellent,
of characteristic zero. We want to define the relative canonical divisor $K_{Z/X}$
for a proper birational morphism $f\colon Z\to X$ between two
regular schemes. We consider the ideal $J$ on $Z$ defined as the Fitting ideal
${\rm Fitt}^0(\Omega_{Z/X})$. We first show that this ideal can be computed locally 
in a similar way with the case of schemes of finite type over a field. 
 
Given $z\in Z$ and $x=f(z)$, we consider the injective homomorphism
of regular local rings $\phi\colon \cO_{X,x}\to \cO_{Z,z}$. Since 
this morphism induces an isomorphism of rings of fractions, and since
our rings are in particular universally catenary, it follows from the dimension formula
(see Theorem~15.6 in \cite{Matsumura}) that 
$$\dim(\cO_{X,x})=\dim(\cO_{Z,z})+{\rm trdeg}(k(z)/k(x)).$$
If we choose local parameters $t_1,\ldots,t_r$ for $\cO_{X,x}$,
and $u_1,\ldots,u_s$ for $\cO_{Z,z}$, then the ring homomorphism induced by
$\phi$ at  the level of completions can be described as
$$\widehat{\phi}\colon k(x)\llbracket t_1,\ldots,t_r\rrbracket
\to k(z)\llbracket u_1,\ldots,u_s\rrbracket.$$ Note that
$k(z)$ is a separable extension of $k(x)$, with ${\rm trdeg}(k(z)/k(x))=r-s$.
A basis of ${\rm Der}_{k(x)}(k(z))$ induces derivations $D_1,\ldots,D_{r-s}$
of $k(z)\llbracket u_1,\ldots,u_s\rrbracket$ over $k(x)$, by acting on coefficients.
If we put $\phi_i:=\widehat{\phi}(t_i)$ and $D_{r-s+j}=\partial/\partial u_j$ for $1\leq j\leq s$,
then we see that $J\cdot\widehat{\cO_{Z,z}}$ 
is generated by ${\rm det}(D_i(\phi_j))_{1\leq i,j\leq r}$. 
In particular, $J$ is a locally principal ideal around $z\in Z$. We denote by
$K_{Z/X}$ the effective divisor corresponding to $J$. It defines the locus where the morphism
$f$ is not \'{e}tale. Its complement is the inverse image of an open subset $U$ of $X$,
such that $f^{-1}(U)\to U$ is an isomorphism. 

Suppose now that $X$ is a regular connected 
scheme. If $Y$ is a closed, proper subscheme of $X$, a \emph{log
resolution} $f\colon X'\to X$ of $(X,Y)$ is a morphism as in 
Theorem~\ref{thm1}, such that in addition the union of $f^{-1}(Y)$ with
the exceptional locus of $f$
has simple normal crossings. (Note that this
exceptional locus is automatically a divisor, being equal to the support
of $K_{X'/X}$.)

We see that every $f\colon X'\to X$ as in Theorem~\ref{thm1} is
dominated by some $g\colon X''\to X'$ such that $f\circ g$ is a log
resolution. Indeed, it is enough to apply the theorem for $(X',D)$,
where $D=f^{-1}(Z)+K_{X'/X}$ to get $g$ that is an isomorphism over
$X'\smallsetminus {\rm Supp}(D)$ (in this case, the exceptional locus
of $f\circ g$ is contained in $g^{-1}(D)$).

Furthermore, the above assertion implies that every two log
resolutions of $(X,Y)$ are dominated by a third one.

\bigskip

Let $X$ be a regular, connected excellent scheme of characteristic
zero. Suppose that $Y$ is a proper closed subscheme of $X$, and let
$\fra$ be the ideal sheaf defining $Y$. If $\lambda\in\RR_{\ge 0}$,
then the \emph{multiplier ideal sheaf} $\cJ(\fra^{\lambda})$ is
defined as follows.

Consider a log resolution $f\colon X'\to X$ of the pair $(X,Y)$. If
we denote $f^{-1}(\fra)=\cO_{X'}(-F)$, then
$$
\cJ(\fra^{\lambda}):=f_*\cO_{X'}(K_{X'/X}-\lfloor \lambda F\rfloor).
$$
Since $K_{X'/X}$ is an effective exceptional divisor, we have
$f_*\cO_{X'}(K_{X'/X})=\cO_X$, and therefore $\cJ(\fra^{\lambda})$
is a (coherent) sheaf of ideals in $\cO_X$.

This is, of course, consistent with the usual definition of
multiplier ideals when $X$ is of finite type over a field. Our
reference for the basic results on multiplier ideals is
\cite{positivity}. Most of the results therein extend almost
verbatim to our setting. We state the results for completeness, but
we give detailed proofs only when there is a significant difference
from the familiar case.

\begin{proposition}\label{prop1}
The definition of $\cJ(\fra^{\lambda})$ does not depend on the
choice of a log resolution.
\end{proposition}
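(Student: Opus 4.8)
The plan is to reduce the independence from the choice of log resolution to the classical case by the standard ``domination'' trick. Since any two log resolutions of $(X,Y)$ are dominated by a third one (as established above), it suffices to show the following: if $f\colon X'\to X$ is a log resolution and $g\colon X''\to X'$ is a proper birational morphism between regular schemes such that $f\circ g$ is again a log resolution, then the two recipes for $\cJ(\fra^\lambda)$, computed on $X'$ and on $X''$, agree. So first I would set up this reduction and fix notation: write $f^{-1}(\fra)=\cO_{X'}(-F)$, so that $(f\circ g)^{-1}(\fra)=\cO_{X''}(-g^*F)$, and recall the transitivity of relative canonical divisors, $K_{X''/X}=K_{X''/X'}+g^*K_{X'/X}$, which follows from the local description of $K_{Z/X}$ via the Fitting ideal of $\Omega$ and the cotangent exact sequence.

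Next, the heart of the argument is the computation
\[
g_*\cO_{X''}\bigl(K_{X''/X'}+g^*K_{X'/X}-\lfloor\lambda\,g^*F\rfloor\bigr)
=\cO_{X'}\bigl(K_{X'/X}-\lfloor\lambda F\rfloor\bigr).
\]
By the projection formula this reduces to showing
\[
g_*\cO_{X''}\bigl(K_{X''/X'}+\lceil\{\lambda F\}\bigr\rceil\circ g\bigr)=\cO_{X'},
\]
where I am using that $g^*\lfloor\lambda F\rfloor-\lfloor\lambda\,g^*F\rfloor$ together with the round-up of the fractional part of $g^*F$ can be absorbed into an effective divisor supported on the exceptional locus of $g$ plus (the pullback of) the simple-normal-crossings divisor $F$. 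The clean way to phrase this is: for a proper birational morphism $g$ between regular schemes, and for any simple-normal-crossings divisor $D$ on $X'$ with $\lfloor D\rfloor=0$ (the fractional part), one has $g_*\cO_{X''}(K_{X''/X'}+\lceil g^*D\rceil)=\cO_X'$. This is exactly the local vanishing / ``birational transformation rule'' lemma from \cite{positivity}, and the point of the excellent-scheme setting is that its proof goes through verbatim once one knows resolution of singularities and the local structure of $K_{Z/X}$ described above. I would state this as a lemma and give the proof: the inclusion $\supseteq$ is because $K_{X''/X'}+\lceil g^*D\rceil$ is effective and exceptional over $X'$; for $\subseteq$ one reduces to the local ring at a point of $X'$, takes a further log resolution to make $g^*D+(\text{exceptional})$ have simple normal crossings, and computes on a normal crossings model where everything is monomial.

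The step I expect to be the main obstacle is verifying that the local vanishing statement $g_*\cO_{X''}(K_{X''/X'}+\lceil g^*D\rceil)=\cO_{X'}$ really does hold over an arbitrary regular excellent base rather than just over a field. In the finite-type-over-a-field case this is usually deduced from a connectedness/Hodge-theoretic argument or from Grauert--Riemenschneider-type vanishing; over a general excellent scheme one does not have those tools directly. The right approach, and the one I would carry out, is to bypass vanishing theorems entirely and argue by explicit local computation on a simple normal crossings model: after replacing $X''$ by a log resolution of $(X',g^*D+K_{X''/X'})$, the question becomes whether, for a regular local ring and a regular subring with the same fraction field, the pushforward of $\cO(K_{X''/X'}+\lceil g^*D\rceil)$ recovers the base ring, and this can be checked monomially using the formula $J\cdot\widehat{\cO_{Z,z}}=(\det(D_i(\phi_j)))$ for $K_{Z/X}$ together with valuation-theoretic bookkeeping on the exceptional primes. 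The key technical input that makes this work in the excellent setting, and which I would invoke, is Theorem~\ref{thm1} (resolution of singularities) applied iteratively, plus the fact that the completions $\widehat{\cO_{X',x}}$ are formal power series rings over the residue field, so the local computations are literally the same power-series computations as in the classical case.
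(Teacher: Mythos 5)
Your proposal is correct and follows the same route as the paper: reduce via domination to a proper birational morphism $g$ between two log resolutions, use transitivity $K_{X''/X}=K_{X''/X'}+g^*K_{X'/X}$, and carry over the argument of \cite{positivity}, Thm.~9.2.18, to the excellent setting using the Fitting-ideal description of $K_{Z/X}$. One point to correct: the classical proof of the comparison/pushforward lemma is not a vanishing theorem and relies on neither Hodge theory nor Grauert--Riemenschneider; it is already the purely combinatorial computation you propose, and rests on the single discrepancy bound --- the one the paper isolates as the thing needing re-verification --- that if a prime $E$ appears with coefficient $a_i\ge 1$ in $f^*(D_i)$ for simple normal crossings divisors $D_1,\dots,D_r$, then its coefficient in $K_{X'/X}$ is at least $a_1+\cdots+a_r-1$, whence $K_{X''/X'}+g^*\lfloor\lambda F\rfloor-\lfloor\lambda g^*F\rfloor$ is effective and exceptional. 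Your monomial computation from $\det(D_i(\phi_j))$ is exactly what establishes this bound, so there is no vanishing theorem to circumvent and (since both $f$ and $f\circ g$ are already log resolutions) no further resolution is needed.
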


\begin{proof}
The argument follows as in \cite{positivity}, Thm.~9.2.18, using the
fact that every two log resolutions can be dominated by a third one, 
and the following consequence of our local computation of $K_{X'/X}$.
If $E$ is a smooth prime divisor on $X'$, and $D_1,\ldots,D_r$ are simple normal crossings
divisors such that $E$ appears with coefficient $a_i\geq 1$ in $f^*(D_i)$, then
$E$ appears with coefficient $\geq a_1+\cdots+a_r-1$ in $K_{X'/X}$.
\end{proof}

Note that if $\fra=\cO_X$, then $\cJ(\fra^{\lambda})=\cO_X$ for
every $\lambda\geq 0$. We make the convention that if $\fra=0$, then
$\cJ(\fra^{\lambda})=0$ for every $\lambda\geq 0$. The following
easy properties follow immediately from the definition.

\begin{proposition}\label{prop2}
Let $X$ be a scheme as above,
 and let $\fra$ be a (coherent) sheaf of ideals on $X$.
\begin{enumerate}
\item[1)] If $\lambda<\mu$, then
$\cJ(\fra^{\mu})\subseteq\cJ(\fra^{\lambda})$.
\item[2)] For every $\lambda\in\RR_{\ge 0}$, there is $\epsilon>0$ such
that $\cJ(\fra^{\mu})=\cJ(\fra^{\lambda})$ for every $\mu\in
[\lambda,\lambda+\epsilon)$.
\item[3)] If $\frb\subseteq\fra$, then
$\cJ(\frb^{\lambda})\subseteq\cJ(\fra^{\lambda})$.
\item[4)] If $\fra\neq 0$, then  $\cJ(\fra^0)=\cO_X$.
\end{enumerate}
\end{proposition}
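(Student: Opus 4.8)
The plan is to deduce all four items directly from the definition $\cJ(\fra^\lambda)=f_*\cO_{X'}(K_{X'/X}-\lfloor\lambda F\rfloor)$, computed on a single log resolution $f\colon X'\to X$ chosen once and for all (legitimate by Proposition~\ref{prop1}); for part 3) one must take a little care and pass to a common resolution, as I explain below. Throughout, write $f^{-1}(\fra)=\cO_{X'}(-F)$ with $F=\sum_i a_i E_i$ an effective divisor with simple normal crossings support, and $K_{X'/X}=\sum_i k_i E_i$ effective.

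\emph{Part 1).} If $\lambda<\mu$ then $\lfloor\lambda F\rfloor\leq\lfloor\mu F\rfloor$ coefficient-by-coefficient (since $a_i\geq 0$ forces $\lambda a_i\leq\mu a_i$), hence $K_{X'/X}-\lfloor\mu F\rfloor\leq K_{X'/X}-\lfloor\lambda F\rfloor$, which gives the inclusion of the corresponding sheaves of sections and therefore, after applying the left-exact functor $f_*$, the inclusion $\cJ(\fra^\mu)\subseteq\cJ(\fra^\lambda)$. \emph{Part 2).} The function $\mu\mapsto\lfloor\mu a_i\rfloor$ is right-continuous and integer-valued, so for each $i$ there is $\epsilon_i>0$ with $\lfloor\mu a_i\rfloor=\lfloor\lambda a_i\rfloor$ for $\mu\in[\lambda,\lambda+\epsilon_i)$; taking $\epsilon=\min_i\epsilon_i$ over the finitely many components $E_i$ of $F$ gives $\lfloor\mu F\rfloor=\lfloor\lambda F\rfloor$ on that interval, hence equality of the multiplier ideals. \emph{Part 4).} If $\fra\neq 0$ then $F$ and $K_{X'/X}$ are honest divisors with $\lfloor 0\cdot F\rfloor=0$, so $\cJ(\fra^0)=f_*\cO_{X'}(K_{X'/X})=\cO_X$, the last equality being the one already noted in the text, since $K_{X'/X}$ is an effective exceptional divisor.

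\emph{Part 3)} is the one point requiring slightly more than a single-resolution computation. Given $\frb\subseteq\fra$ (both nonzero; the case $\frb=0$ is the convention $\cJ(\frb^\lambda)=0\subseteq\cJ(\fra^\lambda)$), I would choose by Proposition~\ref{prop1} and the remark preceding it a single log resolution $f\colon X'\to X$ that simultaneously resolves $\fra$, $\frb$, and the pair $(X, V(\fra\cdot\frb))$, writing $f^{-1}(\fra)=\cO_{X'}(-F_{\fra})$ and $f^{-1}(\frb)=\cO_{X'}(-F_{\frb})$ with simple normal crossings support. Since $\frb\subseteq\fra$, we get $\cO_{X'}(-F_{\frb})=f^{-1}(\frb)\subseteq f^{-1}(\fra)=\cO_{X'}(-F_{\fra})$ as ideal sheaves, which forces $F_{\fra}\leq F_{\frb}$, hence $\lfloor\lambda F_{\fra}\rfloor\leq\lfloor\lambda F_{\frb}\rfloor$ and therefore $K_{X'/X}-\lfloor\lambda F_{\frb}\rfloor\leq K_{X'/X}-\lfloor\lambda F_{\fra}\rfloor$; applying $f_*$ gives $\cJ(\frb^\lambda)\subseteq\cJ(\fra^\lambda)$.

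The only genuine subtlety—so mild it hardly deserves to be called an obstacle—is the one just flagged for part 3): the definition as written uses \emph{a} log resolution of the given pair, so to compare $\cJ(\frb^\lambda)$ with $\cJ(\fra^\lambda)$ one must invoke Proposition~\ref{prop1} to recompute both on a common resolution that dominates log resolutions of both subschemes, which exists by the domination statement established just before Proposition~\ref{prop1}. Everything else is the observation that $\lfloor\cdot\rfloor$ is monotone and right-continuous on each of the finitely many divisorial coefficients, combined with left-exactness of $f_*$ and the vanishing $f_*\cO_{X'}(K_{X'/X})=\cO_X$ for an effective exceptional divisor. No feature of the excellent (as opposed to finite-type) setting intervenes here beyond the existence of log resolutions already secured in Theorem~\ref{thm1}.
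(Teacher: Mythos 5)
Your proposal is correct and is exactly the ``immediate from the definition'' verification the paper has in mind (it gives no proof, simply asserting the properties follow from the definition). The one point genuinely worth spelling out is the one you flag: for part~3) the comparison requires a single log resolution computing both multiplier ideals, which you justify via Proposition~\ref{prop1} and the domination remark preceding it.
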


Suppose now that $\fra\neq 0$ and $\fra\neq\cO_X$. A positive number
$\lambda$ is called a \emph{jumping number} of $\fra$ if
$\cJ(\fra^{\lambda})\neq\cJ(\fra^{\lambda'})$ for every
$\lambda'<\lambda$. It is clear that $\fra$ has jumping numbers.

Consider a log resolution $f\colon X'\to X$, and the divisors $F$
and $K_{X'/X}$, where $f^{-1}(\fra)=\cO(-F)$. Write
\begin{equation}\label{eq_divisors}
F=\sum_{i=1}^ra_iE_i,\,\,K_{X'/X}=\sum_{i=1}^r\kappa_iE_i.
\end{equation}
 It is clear that if $\lambda$ is a jumping number of $\fra$, then there is
$i$ such that $a_i\lambda$ is an integer. In particular, all jumping
numbers are rational numbers. Moreover, the jumping numbers of a
given ideal $\fra$ form a discrete subset of $\RR_+$.

The smallest jumping number of $\fra$ is called the \emph{log
canonical threshold} of $\fra$, and it is denoted by $\lct(\fra)$.
It is the smallest $\lambda$ such that
$\cJ(\fra^{\lambda})\neq\cO_X$. With the notation in
(\ref{eq_divisors}), we have
$$
\lct(\fra)=\min_i\frac{\kappa_i+1}{a_i}.
$$
It follows from Proposition~\ref{prop2} 3) that if
$\frb\subseteq\fra$, then $\lct(\frb)\leq\lct(\fra)$. It is natural
to make the convention that $\lct(0)=0$ and $\lct(\cO_X)=\infty$.

\begin{proposition}\label{prop3}
If $\overline{\fra}$ denotes the integral closure of the ideal
$\fra$, then $\cJ(\fra^{\lambda})=\cJ(\overline{\fra}^{\lambda})$ for every $\lambda$.
In particular, we have $\lct(\fra)=\lct(\overline{\fra})$.
\end{proposition}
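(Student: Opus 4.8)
The plan is to reduce everything to a single log resolution that serves for both $\fra$ and $\overline{\fra}$, and then to exploit the fact that the multiplier ideal, as computed via (\ref{eq_divisors}), depends only on the divisor $F$ with $f^{-1}(\fra)=\cO_{X'}(-F)$. We may assume $0\neq\fra\neq\cO_X$, the other cases being trivial by our conventions.

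First I would fix a log resolution $f\colon X'\to X$ of $(X,Y)$, where $Y=V(\fra)$, and write $\fra\cdot\cO_{X'}=\cO_{X'}(-F)$. The central claim is that $\overline{\fra}\cdot\cO_{X'}=\cO_{X'}(-F)$ as well. One inclusion is immediate from $\fra\subseteq\overline{\fra}$. For the other, I would argue affine-locally: if $g$ is a local section of $\overline{\fra}$, then $g$ satisfies a monic equation $g^m+c_1g^{m-1}+\dots+c_m=0$ with $c_i$ a section of $\fra^i$; pulling this equation back to $\cO_{X'}$ shows that the image of $g$ is integral over the invertible ideal $\fra\cdot\cO_{X'}=\cO_{X'}(-F)$. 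Since $X'$ is regular, hence locally a normal domain, an invertible ideal is integrally closed, so the image of $g$ lies in $\cO_{X'}(-F)$. This gives $\overline{\fra}\cdot\cO_{X'}\subseteq\cO_{X'}(-F)$, hence equality.

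Next I would observe that, because $\overline{\fra}\cdot\cO_{X'}=\cO_{X'}(-F)$ is already invertible with ${\rm Supp}(F)\cup {\rm Exc}(f)$ having simple normal crossings, the very same morphism $f$ is also a log resolution of $(X,V(\overline{\fra}))$ (note $V(\fra)$ and $V(\overline{\fra})$ have the same support, as $\sqrt{\fra}=\sqrt{\overline{\fra}}$, so nothing changes regarding birationality). Computing both multiplier ideals with this common resolution,
$$
\cJ(\overline{\fra}^{\lambda})=f_*\cO_{X'}(K_{X'/X}-\lfloor\lambda F\rfloor)=\cJ(\fra^{\lambda})
$$
for every $\lambda\geq 0$, since the floor term is built from the same $F$. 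The equality $\lct(\fra)=\lct(\overline{\fra})$ then follows at once, $\lct$ being the least $\lambda$ for which the multiplier ideal is proper.

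The only point requiring genuine care — the main obstacle, such as it is — is the verification that an invertible ideal sheaf on the regular scheme $X'$ is integrally closed, i.e.\ the local assertion that a principal ideal generated by a nonzerodivisor in a regular (hence normal) local ring equals its own integral closure; everything else is bookkeeping with the definition of multiplier ideals. This is precisely the argument of \cite{positivity} in the finite type case, and it carries over verbatim to the present excellent setting.
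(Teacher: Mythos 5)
Your proposal is correct and follows essentially the same route as the paper: both rest on the fact that, on a common log resolution, $f^{-1}(\fra)=f^{-1}(\overline{\fra})$, so the two multiplier ideals are computed from the same divisor $F$. The paper states this fact and refers to \cite{positivity}, \S 9.6.A, whereas you supply the underlying argument (the pullback is integral over the invertible ideal $\cO_{X'}(-F)$, which is integrally closed since $X'$ is regular); this is a fine unwinding of the same idea.
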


\begin{proof}
This is an immediate consequence of the fact that if $f\colon X'\to
X$ is a log resolution of both $\fra$ and $\overline{\fra}$, then
$f^{-1}(\fra)=f^{-1}(\overline{\fra})$. See also \cite{positivity},
\S 9.6.A.
\end{proof}

It is convenient to define also a local version of the log canonical
threshold: if $\xi\in X$ is a (not necessarily closed) point in
$V(\fra)$, then we define
$$
\lct_{\xi}(\fra):=\min\{\lct(\fra\vert_U)\mid U\subseteq X\,{\rm
open},\,\xi\in U\}.
$$
Using the notation in (\ref{eq_divisors}), we have
$$
\lct_{\xi}(\fra)=\min\{(\kappa_i+1)/a_i\mid \xi\in f(E_i)\}.
$$
Note that if $X={\rm Spec}(R)$, where $R$ is a local ring, then $X$
has only one closed point $x_0$, and $\lct_{x_0}(\fra)=\lct(\fra)$.

The \emph{log canonical threshold locus} $\LCT(\fra)$ of $\fra$ is
the closed subset of $X$ where $\cJ(\fra^{\lct(\fra)})$ vanishes. More
generally, for every (not necessarily closed) point $\xi$ on $X$, we
denote by $\LCT_\xi(\fra)$ the support of the closed subscheme of
$X$ defined by $\cJ(\fra^{\lct_\xi(\fra)})$. With the notation in
(\ref{eq_divisors}), we have
\begin{equation}\label{formula_LCT}
\LCT(\fra)=\bigcup_{\frac{\kappa_i+1}{a_i}=\lct(\fra)}f(E_i) \and
\LCT_\xi(\fra)= \bigcup_{\frac{\kappa_i+1}{a_i}\le
\lct_\xi(\fra)}f(E_i).
\end{equation}
Note that $\LCT_{\xi}(\fra)\cap U=\LCT(\fra\vert_U)$ for a suitable
open neighborhood $U$ of $\xi$.

Suppose that $X$ is a scheme as above, and let $\fra$ be a nonzero
ideal sheaf on $X$. We consider a point $\xi\in X$, and let $\frp$
denote the ideal defining the closed set $\overline{\{\xi\}}$ (with
the reduced scheme structure). Suppose that $\xi$ lies in the
support of the subscheme defined by $\fra$ (that is, $\frp \supseteq
\fra$). Since $\fra \subseteq \fra + \frp^d$, we clearly have
$$
\lct_{\xi}(\fra+\frp^d) \ge \lct_{\xi}(\fra) \fall d \ge 1.
$$
In fact, the following key property holds.

\begin{proposition}\label{limit1}
With the above notation, we have
$$
\lim_{d\to\infty}\lct_{\xi}(\fra+\frp^d) = \lct_{\xi}(\fra).
$$
Moreover, if $\overline{\{\xi\}}$ is an irreducible component of
$\LCT_\xi(\fra)$, then $\lct_{\xi}(\fra+\frp^d) = \lct_{\xi}(\fra)$
for all $d \gg 0$.
\end{proposition}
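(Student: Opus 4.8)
The plan is to work on a single log resolution and track how adding powers of $\frp$ affects the numerical data $(\kappa_i+1)/a_i$ attached to the exceptional and strict-transform divisors. Fix a log resolution $f\colon X'\to X$ of the pair $(X, V(\fra)\cup V(\frp))$, with notation as in \eqref{eq_divisors}: write $f^{-1}(\fra)=\cO(-F)$ with $F=\sum a_iE_i$, put $f^{-1}(\frp)=\cO(-G)$ with $G=\sum b_iE_i$, and $K_{X'/X}=\sum\kappa_iE_i$. The point is that $f$ is simultaneously a log resolution of $\fra+\frp^d$ for every $d\ge 1$, since $f^{-1}(\fra+\frp^d)=\cO(-\min(F,dG))$ and the supports involved already form a simple normal crossings configuration. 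Hence $\min(F,dG)=\sum_i\min(a_i,db_i)E_i$, and by the formula for the local log canonical threshold,
$$
\lct_\xi(\fra+\frp^d)=\min_{\xi\in f(E_i)}\frac{\kappa_i+1}{\min(a_i,db_i)}.
$$

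First I would split the index set $\{\,i : \xi\in f(E_i)\,\}$ according to whether $b_i=0$ or $b_i\ge 1$. If $b_i=0$ then $E_i$ dominates $\overline{\{\xi\}}$ — more precisely $f(E_i)\not\subseteq V(\frp)$ — and $\min(a_i,db_i)=0$, so such $E_i$ contribute nothing and may be discarded; in fact, since $\xi\in f(E_i)$ and $f(E_i)$ is closed, $b_i=0$ forces $\xi$ to be the generic point of a component of $f(E_i)$, and along such divisors the contribution $(\kappa_i+1)/a_i$ is already $\ge\lct_\xi(\fra)$, so no information is lost. For the remaining divisors, those with $b_i\ge 1$, note that for $d$ large enough (namely $d\ge \max_i a_i$, a finite bound) we have $\min(a_i,db_i)=a_i$ for all of them. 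Therefore for $d\gg 0$ the expression above stabilizes to $\min\{(\kappa_i+1)/a_i : \xi\in f(E_i),\ b_i\ge 1\}$.

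Next I would identify this stable value. For $d\ge 1$ we always have $\lct_\xi(\fra+\frp^d)\ge\lct_\xi(\fra)$, and the displayed formula shows the sequence is nondecreasing in $d$ and bounded above by $\max_i(\kappa_i+1)/a_i$, hence eventually constant; call the limit $c$. Since $c=\min\{(\kappa_i+1)/a_i : \xi\in f(E_i),\ b_i\ge1\}\ge\min_{\xi\in f(E_i)}(\kappa_i+1)/a_i=\lct_\xi(\fra)$, it remains to rule out strict inequality. Suppose $c>\lct_\xi(\fra)$. Then every divisor $E_i$ computing $\lct_\xi(\fra)$ with $\xi\in f(E_i)$ must have $b_i=0$, i.e.\ $f(E_i)\not\subseteq V(\frp)$. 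By \eqref{formula_LCT}, $\LCT_\xi(\fra)=\bigcup f(E_i)$ over all such $i$, so $\LCT_\xi(\fra)$ would not be contained in $V(\frp)=\overline{\{\xi\}}$ at $\xi$ — but $\xi\in\LCT_\xi(\fra)$ by hypothesis, and the only way a divisor $E_i$ with $\xi\in f(E_i)$ can fail to have $f(E_i)\subseteq\overline{\{\xi\}}$ is if $\xi$ is not the generic point of a component of $f(E_i)$; this is compatible, so the contradiction is subtler. I would instead argue: restrict to a small affine neighborhood $U$ of $\xi$ on which $\LCT(\fra|_U)$ meets $U$ only in $\overline{\{\xi\}}\cap U$; then every $E_i$ with $E_i\cap f^{-1}(U)\ne\emptyset$ computing the threshold satisfies $f(E_i)\cap U\subseteq\overline{\{\xi\}}$, and since $\frp|_U$ cuts out exactly $\overline{\{\xi\}}\cap U$, we get $f^{-1}(\frp)$ vanishing on $E_i$, i.e.\ $b_i\ge1$ after all. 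This forces $c=\lct_\xi(\fra)$, proving the first assertion; and the stabilization argument of the previous paragraph then gives the "moreover" part, since when $\overline{\{\xi\}}$ is an irreducible component of $\LCT_\xi(\fra)$ the divisors computing the threshold are exactly those with $b_i\ge1$, for which $\min(a_i,db_i)=a_i$ once $d\gg 0$.

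The main obstacle is the bookkeeping in the last paragraph: one must be careful that "$\xi\in f(E_i)$" does not imply $f(E_i)=\overline{\{\xi\}}$, so the passage from the global threshold to the local one near $\xi$ requires shrinking $X$ to an affine neighborhood where $V(\frp)$ is genuinely the image of the relevant exceptional/strict-transform divisors. Everything else — that $f$ is a common log resolution, that $\min(F,dG)$ has the stated coefficients, and that the min over finitely many ratios stabilizes — is routine.
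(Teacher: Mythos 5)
Your central computational claim is false: a fixed log resolution $f$ of $(X,V(\fra)\cup V(\frp))$ is \emph{not} a log resolution of $\fra+\frp^d$ in general, and $f^{-1}(\fra+\frp^d)$ is generally \emph{not} equal to $\cO(-\min(F,dG))$. The problem occurs at a point of $X'$ where two divisors $E_i,E_j$ cross with $a_i<db_i$ but $a_j>db_j$: there the ideal $f^{-1}(\fra)+f^{-1}(\frp)^d$ fails to be locally principal, and its integral closure is strictly contained in $\cO(-\min(F,dG))$. Passing to a genuine log resolution of $\fra+\frp^d$ introduces \emph{new} divisors with \emph{smaller} ratios, so your displayed formula gives only an upper bound for $\lct_\xi(\fra+\frp^d)$, not the value. (A smaller symptom of the same confusion: since $\fra+\frp^{d+1}\subseteq\fra+\frp^d$, the sequence $\lct_\xi(\fra+\frp^d)$ is non\emph{in}creasing, not nondecreasing; it need not be eventually constant.) A concrete counterexample: $X=\A^2$, $\fra=(x^2)$, $\xi$ the origin, $\frp=(x,y)$. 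Then $\lct_0(\fra)=\tfrac12$ and $\LCT_0(\fra)=\{x=0\}$, so $\{0\}$ is not a component. Direct computation (say via Howald's monomial formula) gives $\lct_0(\fra+\frp^d)=\tfrac{d}{2(d-1)}$, strictly decreasing to $\tfrac12$ and never equal to it. Taking $f$ to be the blowup of the origin, with exceptional $E$ and strict transform $S$ of $\{x=0\}$, one has $a_E=a_S=2$, $b_E=1$, $b_S=0$, $\kappa_E=1$, $\kappa_S=0$, and your formula would give $\min\bigl(\tfrac{2}{\min(2,d)},\,\tfrac{1}{0}\bigr)=1$ for $d\geq 2$ — wrong for $d\geq 3$. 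Indeed, in the chart $x=u'v'$, $y=v'$ one finds $f^{-1}(\fra+\frp^d)=(v'^2)\cdot(u'^2,v'^{d-2})$, which is not principal for $d\geq3$; the extra blowup needed to resolve this codimension-$2$ locus contributes the ratio $d/(2(d-1))$.

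The ingredient you are missing is exactly what the paper's Lemma supplies: starting from a divisor $E_i$ on $X'$ that computes $\lct_\xi(\fra)$ but may have $b_i=0$, one builds a tower of blowups of codimension-$2$ intersections to produce divisors $F_m$ whose \emph{center on $X$ is exactly $\overline{\{\xi\}}$} (hence $\ord_{F_m}(\frp)\geq1$) and whose ratios tend to $\lct_\xi(\fra)$. Then $\ord_{F_m}(\fra+\frp^d)=\ord_{F_m}(\fra)$ for $d>\ord_{F_m}(\fra)$, and the single-divisor upper bound $\lct_\xi(\fra+\frp^d)\leq(\ord_{F_m}(K_{X'_m/X})+1)/\ord_{F_m}(\fra)$ gives $\limsup_d\lct_\xi(\fra+\frp^d)\leq\lct_\xi(\fra)$. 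Your ``moreover'' paragraph is essentially correct and matches the paper, because it only needs that single-divisor upper bound, not your false log-resolution claim: when $\overline{\{\xi\}}$ is a component of $\LCT_\xi(\fra)$ there is an $E_i$ with $f(E_i)=\overline{\{\xi\}}$ computing the threshold, so $b_i\geq1$ and $\lct_\xi(\fra+\frp^d)\leq(\kappa_i+1)/a_i=\lct_\xi(\fra)$ once $d>a_i$. (Your phrase ``the divisors computing the threshold are exactly those with $b_i\geq1$'' is too strong — there can also be computing divisors with $b_i=0$ — but it does not affect the argument.)
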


We first prove the following lemma. Recall that a \emph{divisor over
$X$} is a prime divisor $E$ on some $X'$, where we have a proper
birational morphism $f\colon X'\to X$, with $X'$ regular. The
\emph{center} of $E$ on $X$ is $c_X(E):=f(E)$. Such a divisor
defines a valuation ${\rm ord}_E$ of the function field of $X$. In
particular, we may define ${\rm ord}_E(K_{X'/X})$ and ${\rm
ord}_E(\fra)$ in the natural way. Note that if we use the notation
in (\ref{eq_divisors}), then ${\rm ord}_{E_i}(K_{X'/X})=\kappa_i$
and ${\rm ord}_{E_i}(\fra)=a_i$.

\begin{lemma}
With the notation in Proposition~\ref{limit1}, we have
\begin{equation}\label{eq_inf}
\lct_\xi(\fra)=\inf_{c_X(E)=
\overline{\{\xi\}}}\frac{\ord_E(K_{X'/X})+1}{\ord_E(\fra)},
\end{equation}
where the infimum is taken over all divisors $E$ over $X$ whose
center on $X$ is equal to $\overline{\{\xi\}}$. Moreover, if
$\overline{\{\xi\}}$ is an irreducible component of
$\LCT_\xi(\fra)$, then there is a divisor $E$ with center
$\overline{\{\xi\}}$ on $X$, such that
$$
\lct_\xi(\fra)=\frac{\ord_E(K_{X'/X})+1}{\ord_E(\fra)}.
$$
\end{lemma}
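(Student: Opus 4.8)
The plan is to prove both parts by comparing each side of $(\ref{eq_inf})$ with data read off log resolutions, the one inequality that is not visible on a fixed resolution being supplied by an explicit infinite sequence of blow-ups. Write $Z:=\overline{\{\xi\}}$. I will use two facts from Section~2 throughout: every divisor $E$ over $X$ occurs as one of the components $E_i$ of $F$ on a suitable log resolution $f\colon X'\to X$ of $(X,Y)$ (this is Theorem~\ref{thm1} together with the domination statements following it); and the ratio $\bigl(\ord_E(K_{X'/X})+1\bigr)/\ord_E(\fra)$ does not depend on the model on which $E$ is realized, which follows from the local computation of $K_{X'/X}$, since it gives $K_{X''/X}=K_{X''/X'}+g^*K_{X'/X}$ and a strict transform is never exceptional (this also makes the statement of the Lemma unambiguous).

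For ``$\le$'' in $(\ref{eq_inf})$, let $E$ be a divisor over $X$ with $c_X(E)=Z$. Every open $U\ni\xi$ meets $c_X(E)=Z$, so $E$ restricts to a divisor over $U$; realizing $E$ on a log resolution of $(U,Y\vert_U)$, the formula $\lct(\fra\vert_U)=\min_i(\kappa_i+1)/a_i$ (minimum over the components $E_i$, of which $E$ is one) gives $\lct(\fra\vert_U)\le\bigl(\ord_E(K_{X'/X})+1\bigr)/\ord_E(\fra)$, and taking the infimum over $U$, then over $E$, yields the claim. For the second part of the Lemma, fix a log resolution and use the notation of $(\ref{eq_divisors})$; by $(\ref{formula_LCT})$ we have $\LCT_\xi(\fra)=\bigcup_{(\kappa_i+1)/a_i\le\lct_\xi(\fra)}f(E_i)$, and the irreducible components of a finite union of irreducible closed sets are among the members of the union, so if $Z$ is such a component then $Z=f(E_j)$ for some $j$ with $(\kappa_j+1)/a_j\le\lct_\xi(\fra)$; since $\xi\in f(E_j)$, minimality of $\lct_\xi(\fra)$ forces $(\kappa_j+1)/a_j=\lct_\xi(\fra)$, and $E_j$ attains the infimum. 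This also settles ``$\ge$'' in $(\ref{eq_inf})$ in that case; in general choose $E_0$ among the $E_i$ with $(\kappa_0+1)/a_0=\lct_\xi(\fra)$ and $\xi\in f(E_0)$, and assume $f(E_0)\supsetneq Z$ (otherwise we are done).

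For the remaining inequality I construct divisors with center $Z$ and ratio tending to $\lct_\xi(\fra)$. Pick $p\in E_0$ with $f(p)=\xi$ and set $Z_0:=\overline{\{p\}}$; since $\cO_{Z_0,p}=k(p)$ is a field, the ideal of $Z_0$ in the regular local ring $\cO_{X',p}$ is its maximal ideal, so $Z_0$ is smooth and regularly embedded at $p$, of codimension $\ge 2$ (codimension $1$ would give $Z_0=E_0$, hence $c_X(E_0)=Z$, contrary to assumption). Blow up $Z_0$ (resolving afterward if needed, away from the exceptional locus over $p$), let $E_1'$ be the component of the exceptional divisor dominating $Z_0$, so that $c_X(E_1')=f(Z_0)=Z$, and note $\widetilde{E_0}\cap E_1'$ (where $\widetilde{E_0}$ is the strict transform of $E_0$) contains a point over $\xi$. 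Now iterate: given $E_m'$ and a point of $\widetilde{E_0}\cap E_m'$ over $\xi$, take as the next center the irreducible component $Z_m$ of $\widetilde{E_0}\cap E_m'$ through that point. Keeping the configuration simple normal crossings (each $Z_m$ is a codimension-two stratum, so blowing it up preserves this), $Z_m$ is smooth, regularly embedded, of codimension exactly two, and it maps onto $Z$ (its generic point maps to a generization of $\xi$ inside $Z$, hence to $\xi$); blowing it up gives $E_{m+1}'$ with $c_X(E_{m+1}')=Z$ and again a point of $\widetilde{E_0}\cap E_{m+1}'$ over $\xi$. Writing $X_m$ for the model carrying $E_m'$: near each $Z_m$ both $\fra\cO_{X_m}$ and $K_{X_m/X}$ are monomial in the local equations of the only two components through the generic point of $Z_m$, namely $E_m'$ and $\widetilde{E_0}$ (along which $\fra$ and $K$ have orders $a_0,\kappa_0$), so a direct chart computation gives $\ord_{E_{m+1}'}(\fra)=\ord_{E_m'}(\fra)+a_0$ and $\ord_{E_{m+1}'}(K_{X_{m+1}/X})=\ord_{E_m'}(K_{X_m/X})+\kappa_0+1$, the last ``$+1$'' being the log discrepancy of the blow-up of a codimension-two smooth center. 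Hence $\bigl(\ord_{E_m'}(K_{X_m/X})+1\bigr)/\ord_{E_m'}(\fra)\to(\kappa_0+1)/a_0=\lct_\xi(\fra)$, and together with ``$\le$'' this proves $(\ref{eq_inf})$.

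The step I expect to be the main obstacle is this last construction: arranging that the successive blow-up centers are \emph{simultaneously} smooth (so that one stays in the regular, simple-normal-crossings setting of Theorem~\ref{thm1} and controls the discrepancies) and \emph{contracted to $\xi$}, while keeping track of the orders $\ord_{E_m'}(\fra)$ and $\ord_{E_m'}(K_{X_m/X})$. The mechanism that makes it go through is that the component of $\widetilde{E_0}\cap E_m'$ through a point over $\xi$ is automatically smooth of codimension two and again lies over $\xi$, and near its generic point $\fra$ and the relative canonical divisor remain monomial, which produces the clean additive recursions above; it is precisely the ``$+1$'' coming from blowing up a codimension-two center that accounts for the ``$+1$'' in the limiting ratio $(\kappa_0+1)/a_0$. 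The remaining verifications — realizability of divisors on log resolutions, model-independence of the ratio, and the elementary claim about irreducible components of finite unions — are routine consequences of Section~2.
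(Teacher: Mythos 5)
Your argument is correct and follows essentially the same line as the paper's: the first inequality is read off log resolutions, the second part is the same observation from $(\ref{formula_LCT})$, and the remaining inequality comes from the same infinite tower of blow-ups of codimension-two strata contained in $\widetilde{E_0}$ and the previous exceptional divisor, with the same additive recursions for $\ord(\fra)$ and $\ord(K)$. The only real deviation is the seed of the tower: the paper first passes to a log resolution of $\fra\cdot\frp$ so that $f^{-1}(\overline{\{\xi\}})$ is an SNC divisor, picks a component $F_0$ of it through a point over $\xi$, and blows up $F\cap F_0$ (already a codimension-two SNC stratum), whereas you blow up $\overline{\{p\}}$ for a point $p\in E_0$ over $\xi$; this works, but it forces you to argue separately (as you do) that the configuration becomes SNC near the fiber over $p$ after the first blow-up, a step the paper's choice of $F_0$ sidesteps.
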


\begin{proof}
Note first that the infimum in (\ref{eq_inf}) is
$\geq\lct_{\xi}(\fra)$ by definition. In order to prove the other
inequality, the argument follows as in \cite[(17.1.1.3)]{Ketal}.
Let us consider a log resolution $f\colon X'\to X$ for
$\fra\cdot\frp$ (recall that $\frp$ is the ideal defining
$\overline{\{\xi\}}$). With the notation in (\ref{eq_divisors}), fix
a prime divisor $F=E_i$ on $X'$ such that $\xi\in c_X(F)$ and
$\lct_{\xi}(\fra)=\frac{\kappa_i+1}{a_i}$.

If $c_X(F)=\overline{\{\xi\}}$, then we are done. Otherwise,
by construction, $f^{-1}(\overline{\{\xi\}})$ is a union of divisors
having simple normal crossings with $F$. Hence we may choose such a
divisor $F_0$, with $F\cap F_0$ containing a point $\xi'\in
f^{-1}(\xi)$. This implies that $f(F\cap F_0)=\overline{\{\xi\}}$.
We define recursively $F_m$ for $m\geq 1$, as follows. Let $X'_1$ be
the blow-up of $X'$ along $F\cap F_0$, and $F_1$ be a component of
the exceptional divisor whose image in $X'$ contains $\xi'$. In
general, we denote by $X'_m$ the blow-up of $X'_{m-1}$ along the
intersection of $F_{m-1}$ with the proper transform of $F$, and by
$F_m$ the component of the exceptional divisor whose image in $X'$
contains $\xi'$. It is clear that $c_X(F_m)=\overline{\{\xi\}}$ for
every $m$. On the other hand, a standard computation gives
$${\rm ord}_{F_m}(\fra)=m\cdot {\rm ord}_F(\fra)+{\rm
ord}_{F_0}(\fra),$$
$$1+{\rm ord}_{F_m}(K_{X'_m/X})=m\left(1+{\rm ord}_{F}(K_{X'/X})\right)+\left(1+{\rm
ord}_{F_0}(K_{X'/X})\right).$$ Therefore we get (\ref{eq_inf}),
since
$$\lim_{m\to\infty}\frac{1+{\rm ord}_{F_m}(K_{X'_m/X})}{{\rm
ord}_m(\fra)}=\lct_{\xi}(\fra).$$

The second assertion, when $\overline{\{\xi\}}$ is an irreducible
component of $\LCT_{\xi}(\fra)$, follows from (\ref{formula_LCT}).
Indeed, note that if $\xi\in f(E_i)$, then by definition we have
\begin{equation}\label{ineq10}
\lct_\xi(\fra)\leq\frac{\ord_{E_i}(K_{X'/X})+1}{\ord_{E_i}(\fra)},
\end{equation}
and the inequality is strict if $\overline{\{\xi\}}$ is strictly contained
in $f(E_i)$.

Therefore every irreducible component of $\LCT_{\xi}(\fra)$
containing $\xi$ is of the form $f(E_i)$ for some $E_i$ that
achieves equality in (\ref{ineq10}).
\end{proof}

\begin{proof}[Proof of Proposition~\ref{limit1}]
By the lemma, we see that for every $\epsilon >0$ there is a divisor
$E$ on some $X'$ over $X$ having center $\overline{\{\xi\}}$ and such that
$$
\frac{\ord_E(K_{X'/X})+1}{\ord_E(\fra)} \le \lct_\xi(\fra) +
\epsilon.
$$
Since $\frp$ is the ideal defining $\overline{\{\xi\}}$, we have
$\ord_E(\frp) \ge 1$. It follows that if $d>{\rm ord}_E(\fra)$, then
 $\ord_E(\fra + \frp^d) =
\ord_E(\fra)$. Hence for all such $d$ we have
$$
\lct_\xi(\fra + \frp^d) \le \frac{\ord_E(K_{X'/X})+1}{\ord_E(\fra +
\frp^d)} =
\frac{\ord_E(K_{X'/X})+1}{\ord_E(\fra)}\leq\lct_{\xi}(\fra)+\epsilon.
$$
Therefore $\limsup_{d \to \infty}\lct_{\xi}(\fra+\frp^d) \le
\lct_{\xi}(\fra)$. Since, as we have already observed,
$\lct_{\xi}(\fra+\frp^d) \ge \lct_{\xi}(\fra)$ for every $d \ge 1$,
we conclude that the limit exists and is equal to
$\lct_{\xi}(\fra)$.

Regarding the second assertion, if $\overline{\{\xi\}}$ is an irreducible
component of $\LCT_\xi(\fra)$, then by the second part of the lemma
we can find an $E$ with center $\overline{\{\xi\}}$ such that
$$
\frac{\ord_E(K_{X'/X})+1}{\ord_E(\fra)} = \lct_\xi(\fra),
$$
and thus we obtain that $\lct_\xi(\fra + \frp^d) = \lct_\xi(\fra)$
for all $d>{\rm ord}_E(\fra)$.
\end{proof}

Our next result allows us to reduce the computation of multiplier
ideals and log canonical thresholds to the case when $X$ is the
spectrum of a complete regular local ring of characteristic zero.

\begin{proposition}\label{completion}
Let $X$ be a regular, connected excellent scheme of characteristic
zero. Suppose that $\xi\in X$ is a (not necessarily closed) point,
and let $\widehat{\cO_{X,\xi}}$ be the completion of the local ring
at $\xi$. If $h\colon {\rm Spec}(\widehat{\cO_{X,\xi}})\to X$ is the
canonical morphism, then for every nonzero sheaf of ideals $\fra$ on
$X$ such that $\xi$ lies in the support of the subscheme defined by
$\fra$, and for every $\lambda\in\RR_+$ we have
$$
\cJ(\fra^{\lambda})\.\widehat{\cO_{X,\xi}}=\cJ\big((\fra\.
\widehat{\cO_{X,\xi}})^{\lambda}\big).
$$
In particular, we have $\lct_{\xi}(\fra)=\lct(\fra\.
\widehat{\cO_{X,\xi}})$.
\end{proposition}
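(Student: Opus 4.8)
The plan is to reduce the statement to a single base-change computation for one log resolution, the new ingredient being that the completion morphism is \emph{regular}, which is exactly where excellence of $X$ enters. Concretely, choose a log resolution $f\colon X'\to X$ of $(X,Y)$ with $Y=V(\fra)$, set $\widehat X:={\rm Spec}(\widehat{\cO_{X,\xi}})$, and form the fibre product $\widehat{X'}:=X'\times_X\widehat X$, with projections $\pi\colon\widehat{X'}\to X'$ and $\widehat f\colon\widehat{X'}\to\widehat X$, so that $f\circ\pi=h\circ\widehat f$. Since $X$ is excellent, the morphism $h$ (and hence its base change $\pi$) is flat with geometrically regular fibres; as a Noetherian local ring flat over a regular local ring with regular closed fibre is regular, $\widehat{X'}$ is regular. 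Moreover $\widehat f$ is proper, being a base change of $f$, and it is birational: $\widehat X$ is the spectrum of a domain whose generic point maps to the generic point of $X$, and $\widehat f$ is an isomorphism over $h^{-1}(U)$ whenever $f$ is one over $U$.

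The next step is to check that $\widehat f$ is a log resolution of $(\widehat X,V(\fra\.\widehat{\cO_{X,\xi}}))$, with numerical data pulled back from $f$. Writing $f^{-1}(\fra)=\cO_{X'}(-F)$ and $\widehat F:=\pi^*F$, one needs: (i) $\widehat f^{-1}(\fra\.\widehat{\cO_{X,\xi}})=\cO_{\widehat{X'}}(-\widehat F)$, since forming the inverse-image ideal sheaf commutes with base change; (ii) $K_{\widehat{X'}/\widehat X}=\pi^*K_{X'/X}$, because $\Omega_{\widehat{X'}/\widehat X}=\pi^*\Omega_{X'/X}$ and ${\rm Fitt}^0$ commutes with base change, so the locally principal ideal defining $K_{\widehat{X'}/\widehat X}$ is the pullback of the one defining $K_{X'/X}$; and (iii) the reduced divisor $\pi^{-1}(F\cup{\rm Exc}(f))$ has simple normal crossings. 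For (iii) I would argue locally: if $u_1,\dots,u_k$ are part of a regular system of parameters of $\cO_{X',z}$ cutting out the components through a point $z$, then $\cO_{X',z}/(u_1,\dots,u_k)\otimes_{\cO_{X',z}}\cO_{\widehat{X'},\widehat z}$ is again regular (base change of a regular ring along a regular morphism), which forces the images of the $u_i$ to be part of a regular system of parameters of $\cO_{\widehat{X'},\widehat z}$ with each $V(u_i)$ regular there --- precisely the simple normal crossings condition. Combining (i)--(iii) gives
$$
\cO_{\widehat{X'}}(K_{\widehat{X'}/\widehat X}-\lfloor\lambda\widehat F\rfloor)=\pi^*\cO_{X'}(K_{X'/X}-\lfloor\lambda F\rfloor).
$$

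With this in place, I would invoke flat base change: since $f$ is proper and $h$ is flat, $h^*f_*\cG\cong\widehat f_*\pi^*\cG$ for every coherent sheaf $\cG$ on $X'$. Taking $\cG=\cO_{X'}(K_{X'/X}-\lfloor\lambda F\rfloor)$, and using (i)--(iii) together with the fact that $\widehat f$ is a log resolution to identify $\widehat f_*\pi^*\cG=\cJ((\fra\.\widehat{\cO_{X,\xi}})^\lambda)$, yields
$$
\cJ(\fra^\lambda)\.\widehat{\cO_{X,\xi}}=h^*f_*\cG=\widehat f_*\pi^*\cG=\cJ\big((\fra\.\widehat{\cO_{X,\xi}})^\lambda\big).
$$
The last assertion follows because $\widehat{\cO_{X,\xi}}$ is faithfully flat over $\cO_{X,\xi}$: the number $\lct_\xi(\fra)$ is the least $\lambda$ with $\cJ(\fra^\lambda)_\xi\neq\cO_{X,\xi}$, the number $\lct(\fra\.\widehat{\cO_{X,\xi}})$ is the least $\lambda$ with $\cJ((\fra\.\widehat{\cO_{X,\xi}})^\lambda)\neq\widehat{\cO_{X,\xi}}$, and an ideal of $\cO_{X,\xi}$ is the unit ideal if and only if its extension to $\widehat{\cO_{X,\xi}}$ is.

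I expect the real obstacle to be step (iii), and more basically the regularity of $\widehat{X'}$: one has to verify that the defining properties of a log resolution survive base change along $h$, a morphism that is not of finite type. This is exactly where the excellence of $X$ is used, via the fact that $h$ --- hence $\pi$ --- is a regular morphism; once that is granted, the remainder is a formal consequence of flat base change and of the compatibility of $\Omega$ and ${\rm Fitt}^0$ with base change.
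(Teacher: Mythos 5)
Your proposal follows the same route as the paper's proof: form the Cartesian square of the log resolution $f\colon X'\to X$ against the completion morphism $h$, use excellence to deduce that $h$ (hence its base change) is a regular morphism, apply Matsumura's criterion (flat local with regular base and regular closed fibre implies regular) to get regularity of $\widehat{X'}$, check that the log resolution data and the relative canonical divisor pull back, and finish with flat base change together with faithful flatness for the lct statement. The only difference is cosmetic: the paper phrases the simple normal crossings and $K_{X'/X}$-pullback checks via "algebraic coordinates pull back to algebraic coordinates," whereas you argue via regular systems of parameters and the compatibility of $\Omega$ and ${\rm Fitt}^0$ with base change, but both come down to the same local computation over a regular morphism.
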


We will make use, in particular, of the following special case.

\begin{corollary}\label{reduction1}
Let $X$ and $\fra$ be as in the theorem. If
$\xi$ is the generic point of an irreducible component of
$\LCT(\fra)$, then $\lct(\fra)=\lct(\fra\cdot
\widehat{\cO_{X,\xi}})$, and $\LCT(\fra\cdot
\widehat{\cO_{X,\xi}})$ consists only of the closed point.
\end{corollary}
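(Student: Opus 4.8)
The plan is to read off Corollary~\ref{reduction1} from Proposition~\ref{completion} together with the description of the log canonical threshold locus on a log resolution. Fix a log resolution $f\colon X'\to X$ of $(X,Y)$ and write $F=\sum_i a_iE_i$, $K_{X'/X}=\sum_i\kappa_iE_i$ as in (\ref{eq_divisors}); recall from (\ref{formula_LCT}) that $\LCT(\fra)=\bigcup_{(\kappa_i+1)/a_i=\lct(\fra)}f(E_i)$, and that $\lct_\xi(\fra)=\min\{(\kappa_i+1)/a_i\mid \xi\in f(E_i)\}$. The first step is to check that $\lct_\xi(\fra)=\lct(\fra)$. One inequality is immediate, since $\lct_\xi(\fra)$ is a minimum of the ratios $(\kappa_i+1)/a_i$ taken over a subset of the indices, so $\lct_\xi(\fra)\ge\min_i(\kappa_i+1)/a_i=\lct(\fra)$. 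For the reverse inequality I would use that $\LCT(\fra)$ is a \emph{finite} union of the irreducible closed sets $f(E_i)$: since $\overline{\{\xi\}}$ is an irreducible component of $\LCT(\fra)$, hence a maximal irreducible closed subset, it must coincide with $f(E_{i_0})$ for some index $i_0$ with $(\kappa_{i_0}+1)/a_{i_0}=\lct(\fra)$. As $\xi\in f(E_{i_0})$, this forces $\lct_\xi(\fra)\le\lct(\fra)$. Since $\xi\in\LCT(\fra)\subseteq V(\fra)$, Proposition~\ref{completion} applies and yields $\lct(\fra\cdot\widehat{\cO_{X,\xi}})=\lct_\xi(\fra)=\lct(\fra)$, which is the first assertion.

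For the statement about $\LCT(\fra\cdot\widehat{\cO_{X,\xi}})$, I would first note that no irreducible component of $\LCT(\fra)$ other than $\overline{\{\xi\}}$ contains $\xi$: such a component is closed, so it would contain $\overline{\{\xi\}}$, contradicting maximality of the latter. Hence there is an open neighborhood $U$ of $\xi$ in $X$ with $\LCT(\fra)\cap U=\overline{\{\xi\}}\cap U$, and after localizing further at $\xi$ the closed subset of $\Spec(\cO_{X,\xi})$ defined by $\cJ(\fra^{\lct(\fra)})\cdot\cO_{X,\xi}$ is $\overline{\{\xi\}}\cap\Spec(\cO_{X,\xi})$, which is just the closed point: a point of $\Spec(\cO_{X,\xi})$ is a generization of $\xi$, and it lies in $\overline{\{\xi\}}$ only if it is also a specialization of $\xi$, hence equals $\xi$. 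Thus $\cJ(\fra^{\lct(\fra)})\cdot\cO_{X,\xi}$ is primary to the maximal ideal $\frm_\xi$. Applying Proposition~\ref{completion} with $\lambda=\lct(\fra)=\lct(\fra\cdot\widehat{\cO_{X,\xi}})$ then gives
$$\cJ\big((\fra\cdot\widehat{\cO_{X,\xi}})^{\lct(\fra\cdot\widehat{\cO_{X,\xi}})}\big)=\cJ(\fra^{\lct(\fra)})\cdot\widehat{\cO_{X,\xi}},$$
and since the completion of an $\frm_\xi$-primary ideal has radical $\frm_\xi\widehat{\cO_{X,\xi}}$ (its quotient has finite length, and this is preserved under completion), the support of this multiplier ideal is exactly the closed point of $\Spec(\widehat{\cO_{X,\xi}})$.

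The only delicate point, and the one I would single out as the main obstacle, is the purely topological bookkeeping of the first two steps: identifying $\overline{\{\xi\}}$ with one of the $f(E_i)$ realizing the minimal ratio, and recognizing that $\cJ(\fra^{\lct(\fra)})$ becomes $\frm_\xi$-primary after localization at $\xi$. Both rest only on the elementary observation that an irreducible component of a finite union of irreducible closed sets must be one of those sets, and that $\overline{\{\xi\}}$, being a component, contains every component through $\xi$. Everything else is a direct invocation of Proposition~\ref{completion} and the standard behaviour of finite-colength ideals under completion, so no new ideas are needed.
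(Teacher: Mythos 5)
Your proof is correct, and in fact the paper states Corollary~\ref{reduction1} without any argument, treating it as an immediate consequence of Proposition~\ref{completion}; what you have written is precisely the bookkeeping the authors leave to the reader. One small streamlining: for the equality $\lct_\xi(\fra)=\lct(\fra)$ you do not need to identify $\overline{\{\xi\}}$ with one of the centers $f(E_{i_0})$ --- the mere fact that $\xi$ lies in $\LCT(\fra)=\bigcup_{(\kappa_i+1)/a_i=\lct(\fra)}f(E_i)$ already produces an index $i_0$ with $\xi\in f(E_{i_0})$ and $(\kappa_{i_0}+1)/a_{i_0}=\lct(\fra)$, giving $\lct_\xi(\fra)\le\lct(\fra)$ directly, so only $\xi\in\LCT(\fra)$ (not the component hypothesis) is used there. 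The component hypothesis enters exactly where you use it: to see that $\xi$ is an isolated point of $\LCT(\fra)\cap\Spec(\cO_{X,\xi})$, hence that $\cJ(\fra^{\lct(\fra)})\cdot\cO_{X,\xi}$ is $\frm_\xi$-primary, which together with Proposition~\ref{completion} gives that $\LCT(\fra\cdot\widehat{\cO_{X,\xi}})$ is the closed point.
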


\begin{proof}[Proof of Proposition~\ref{completion}]
Let $W={\rm Spec}(\widehat{\cO_{X,\xi}})$, and denote by $Y$ and $Z$
the subschemes of $X$ and $W$, respectively defined by $\fra$ and
$\fra\. \widehat{\cO_{X,\xi}}$. Let $f\colon X'\to X$ be a log
resolution of $(X,Y)$. If we consider the Cartesian diagram
\begin{equation}
\begin{CD}\label{diagram}
W' @>{h'}>> X' \\
@VV{g}V @VV{f}V \\
W @>{h}>>X
\end{CD}
\end{equation}
then $g$ is a log resolution of $(W,Z)$. Indeed, it is clear that
$g$ is proper, and if $f$ is an isomorphism over the nonempty open
subset $U$, then $g$ is an isomorphism over
$h^{-1}(U)$, which is nonempty. Furthermore, $W'$ is a regular scheme. In
order to see this, it is enough to show that all fibers of $h'$ are
regular (note that $h$ is flat, and therefore $h'$ is flat, too, and
we may apply Thm. 23.7 in \cite{Matsumura}). This in turn follows
from the fact that the morphism
$\cO_{X,\xi}\to\widehat{\cO_{X,\xi}}$ has geometrically regular
fibers (see the first property in the definition of excellent
rings).

Using the same argument, we see that if $x_1,\ldots,x_n$ are
algebraic coordinates in an open subset $V$ of $X'$, then their
pull-backs to ${h'}^{-1}(V)$ give an algebraic system of coordinates
in this open subset of $W'$. Note also that
$K_{W'/W}={h'}^*(K_{X/X})$. Putting these together, we see that
$K_{W'/W}+g^{-1}(Z)$ is a divisor with simple normal crossings on
$W'$, and therefore $g$ is a log resolution of $(W,Z)$. Moreover, if
$F=f^{-1}(Y)$, then ${h'}^*(F)=g^{-1}(Z)$, and flat base change
gives
$$\cJ\big((\fra\. \widehat{\cO_{X,\xi}})^{\lambda}\big)
=g_*\cO_{W'}\left({h'}^*(K_{X'/X}-\lfloor \lambda
F\rfloor)\right)=h^*f_*\cO_{X'}(K_{X'/X}-\lfloor \lambda
F\rfloor)=\cJ(\fra^{\lambda})\.\widehat{\cO_{X,\xi}}.$$
The assertion
about log canonical thresholds is an immediate consequence, since
$$\lct_{\xi}(\fra)=\sup\{\lambda\mid \cJ(\fra^\lambda)\cdot\cO_{X,\xi}=
\cO_{X,\xi}\}=\sup\{\lambda\mid \cJ(\fra^\lambda)\cdot\widehat{\cO_{X,\xi}}=
\widehat{\cO_{X,\xi}}\}=\lct(\fra\. \widehat{\cO_{X,\xi}}).$$
\end{proof}

\begin{proposition}\label{field_extension}
Let $K/k$ be a field extension. If $\fra$ is a nonzero ideal in
$k[x_1,\ldots,x_n]$ whose cosupport contains the point $0$
corresponding to $(x_1,\ldots,x_n)$, then
\begin{equation}\label{formula_1}
\lct_0(\fra)=\lct_0(\fra\cdot K[x_1,\ldots,x_n]).
\end{equation}
 Similarly, if
$\frb$ is a nonzero proper ideal in $k\[x_1,\ldots,x_n\]$, then
\begin{equation}\label{formula_2}
\lct(\frb)=\lct(\frb\cdot K\[x_1,\ldots,x_n\]).
\end{equation}
\end{proposition}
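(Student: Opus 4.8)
The plan is to reduce both statements to the invariance of log canonical thresholds under passage to the completion (Proposition~\ref{completion}), so that it suffices to prove the statement for ideals in a complete regular local ring, and then to reduce the field extension to one that is particularly easy to handle at the level of log resolutions. The point is that once we are working with $k\[x_1,\ldots,x_n\]$, a log resolution of the pair can be base-changed along $k\[x_1,\ldots,x_n\]\to K\[x_1,\ldots,x_n\]$ — or along the relevant faithfully flat subextension — in such a way that it remains a log resolution, and the multiplier ideal is computed by flat base change exactly as in the proof of Proposition~\ref{completion}.

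First I would observe that \eqref{formula_1} follows from \eqref{formula_2} together with Proposition~\ref{completion}. Indeed, applying Proposition~\ref{completion} at the point $0$ on both $\Spec k[x_1,\ldots,x_n]$ and $\Spec K[x_1,\ldots,x_n]$, we get $\lct_0(\fra)=\lct(\fra\cdot k\[x_1,\ldots,x_n\])$ and $\lct_0(\fra\cdot K[x_1,\ldots,x_n])=\lct(\fra\cdot K\[x_1,\ldots,x_n\])$, and the completion of $K[x_1,\ldots,x_n]_{(x_1,\ldots,x_n)}$ along its maximal ideal is $K\[x_1,\ldots,x_n\]$, which is $\big(k\[x_1,\ldots,x_n\]\big)\widehat{\otimes}\,K$ in the appropriate sense; so \eqref{formula_1} is the special case $\frb=\fra\cdot k\[x_1,\ldots,x_n\]$ of \eqref{formula_2}. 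Thus the whole proposition reduces to \eqref{formula_2}.

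For \eqref{formula_2}, set $R=k\[x_1,\ldots,x_n\]$ and $S=K\[x_1,\ldots,x_n\]$, with $W=\Spec R$, $W'=\Spec S$, and let $h\colon W'\to W$ be the induced morphism (faithfully flat, since $K/k$ is flat and hence $S$ is a faithfully flat $R$-algebra). Let $f\colon X'\to W$ be a log resolution of $(W,V(\frb))$. Form the Cartesian square with $X''=X'\times_W W'$, projections $g\colon X''\to W'$ and $h'\colon X''\to X'$, exactly as in diagram~\eqref{diagram}. The argument of Proposition~\ref{completion} applies almost verbatim: $g$ is proper and birational (it is an isomorphism over $h^{-1}(U)$ where $f$ is an isomorphism over $U$), $X''$ is regular because the fibers of $h'$ are regular — here is where one uses that $K/k$, and hence the extension $R\to S$, is such that $R_\frp\to S$ has geometrically regular fibers; since $k$ has characteristic zero, any field extension $K/k$ is separable, so $R\to S$ is a regular morphism — algebraic coordinates pull back to algebraic coordinates, $K_{X''/W'}=(h')^*K_{X'/W}$, the preimage of $V(\frb)$ base-changes correctly, and flat base change for the pushforward gives $\cJ\big((\frb S)^\lambda\big)=\cJ(\frb^\lambda)\cdot S$. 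In particular $\lct(\frb)=\lct(\frb S)$, which is \eqref{formula_2}.

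\textbf{The main obstacle} is the regularity of $X''$, i.e.\ the claim that $R\to S=R\otimes_k K$ has geometrically regular fibers. For $k$ of characteristic zero every field extension $K/k$ is separable, so $k\to K$ is a regular morphism, and regularity of a morphism is preserved by base change; hence $R\to S$ is regular and the fibers of $h'$ are geometrically regular. One could worry about the case of an infinitely generated extension $K/k$, but separability passes to arbitrary extensions in characteristic zero (every extension is a filtered colimit of finitely generated, hence separably generated, subextensions), so no difficulty arises; alternatively one reduces to finitely generated $K/k$ since a log resolution of $(W,V(\frb))$ and all the finitely many coefficients $a_i,\kappa_i$ in \eqref{eq_divisors} are already defined over a finitely generated subextension of $K/k$, and $\lct$ only depends on these. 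Everything else is a repetition of the flat base change computation already carried out in the proof of Proposition~\ref{completion}.
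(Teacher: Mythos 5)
Your logical order is the reverse of the paper's (you derive \eqref{formula_1} from \eqref{formula_2} and then attack \eqref{formula_2} directly by base change), and the weak point you yourself flag is indeed where the argument breaks. The assertion that $S=K\[x_1,\ldots,x_n\]$ equals $R\otimes_k K$ for $R=k\[x_1,\ldots,x_n\]$ is false whenever $K/k$ is not algebraic. For instance, if $K=k(t)$, the power series $\sum_{m\ge 0} t^m x_1^m$ lies in $K\[x_1,\ldots,x_n\]$ but not in $k\[x_1,\ldots,x_n\]\otimes_k k(t)$, which consists only of finite sums $\sum_i f_i\otimes g_i$. So $\Spec S\to\Spec W$ is \emph{not} the pullback of $\Spec K\to\Spec k$, and you cannot deduce that $R\to S$ has geometrically regular fibers simply by invoking preservation of regularity under base change. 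The claim that $R\to S$ is a regular morphism is a genuinely delicate statement about complete local rings (it is not a formal fiber of an excellent ring, nor a base change along $k\to K$), and you have not supplied an argument for it; the ``alternative'' of reducing to $K/k$ finitely generated does not help, since the same identification fails for finitely generated transcendental extensions, and in any case it is unclear what object you are claiming is defined over a finitely generated subextension. In contrast, the paper proves \eqref{formula_1} first: there the relevant morphism $h\colon \mathbf{A}^n_K\to\mathbf{A}^n_k$ genuinely \emph{is} obtained by base change from $\Spec K\to\Spec k$ (since $K[x_1,\ldots,x_n]=k[x_1,\ldots,x_n]\otimes_k K$), so separability of $K/k$ in characteristic zero immediately gives geometrically regular fibers and the proof of Proposition~\ref{completion} carries over verbatim. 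The paper then deduces \eqref{formula_2} from \eqref{formula_1} by approximating $\frb$ by the polynomial ideals $\frb_d=(\frb+\frm^d)\cap k[x_1,\ldots,x_n]$ and invoking Propositions~\ref{limit1} and~\ref{completion}; this sidesteps entirely the question of whether $k\[x_1,\ldots,x_n\]\to K\[x_1,\ldots,x_n\]$ is a regular morphism. To repair your proof you would either need to prove that regularity statement directly (a nontrivial task you should not take for granted) or restructure to follow the paper's route from the polynomial case.
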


\begin{proof}
Let $X={\mathbf A}^n_k$ and $W={\mathbf A}_K^n$, and let $h\colon
W\to X$ denote the induced flat map. It follows as in the proof of
Proposition~\ref{completion} that if $f$ is a log resolution of
$(X,\fra)$, then we get a Cartesian diagram (\ref{diagram}) such
that $g$ is a log resolution of $(W,\fra\.\cO_W)$ (the key fact is
that since we are in characteristic zero, the field extension $K/k$ is
separable, hence geometrically regular). As a consequence,
we deduce that
$$\cJ\left((\fra\cdot\cO_W)^{\lambda}\right)=\cJ(\fra^{\lambda})\cdot\cO_W$$
for every $\lambda\in\RR_+$, and since the homomorphism
$\cO_{X,0}\to\cO_{W,0}$ is faithfully flat, we get
(\ref{formula_1}).

In the formal power series case, denote by $\frm$ and $\frm'$ the
maximal ideals in $k\[x_1,\ldots,x_n\]$ and $K\[x_1,\ldots,x_n\]$,
respectively. For every $d$, the ideal $\frb+\frm^d$ is generated by
polynomials of degree $\leq d$, hence it is equal to $\frb_d\cdot
k\[x_1,\ldots,x_n\]$, where $\frb_d=(\frb+\frm^d)\cap
k[x_1,\ldots,x_n]$. If we put $\frb_d'= \frb_d\. K[x_1,\ldots,x_n]$,
then it follows from (\ref{formula_1}) that
$\lct_0(\frb_d)=\lct_0(\frb'_d)$ for every $d$. On the other hand,
Proposition~\ref{completion} gives
$$\lct(\frb_d\. k\[x_1,\ldots,x_n\])=\lct_0(\frb_d)\quad\text{and}\quad
\lct(\frb'_d\. K\[x_1,\ldots,x_n\]=\lct_0(\frb'_d).$$ Using
Proposition~\ref{limit1}, we deduce that
$$\lct(\frb)=\lim_{d\to\infty}\lct_0(\frb_d) \quad\text{and}\quad
\lct(\frb\. K\[x_1,\ldots,x_n\])=\lim_{d\to\infty}\lct_0(\frb'_d),$$
and since $\lct_0(\frb_d) = \lct_0(\frb'_d)$, we get (\ref{formula_2}).
\end{proof}

We note that using Propositions~\ref{limit1} and \ref{completion}
one can extend various results about log canonical
thresholds from the familiar case of varieties over an algebraically
closed field to our more general setting. We state one such result
which gives a sharpening of Proposition~\ref{limit1}.

\begin{corollary}\label{limit2}
Let $X$ be a connected, regular excellent scheme, and let $\fra$ be
a nonzero ideal sheaf on $X$. We consider a point $\xi$ in the cosupport of
$\fra$, and let $\frp$ be the ideal sheaf defining
$\overline{\{\xi\}}$. If $e=\dim(\cO_{X,\xi})$, and if $\frb$ is an
ideal sheaf on $X$ such that $\frb\subseteq\fra+\frp^{(d)}$ $\rm
($where $\frp^{(d)}$ denotes the $d^{\rm th}$ symbolic power of
$\frp$$\rm )$, then
$$
\lct_{\xi}(\frb)-\lct_{\xi}(\fra)\leq \frac{e}{d}.
$$
\end{corollary}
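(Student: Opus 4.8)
The plan is to reduce to the complete local case and then import a known bound for log canonical thresholds of varieties over a field. First I would apply Proposition~\ref{completion}: passing to $\widehat{\cO_{X,\xi}}$, we have $\lct_\xi(\fra)=\lct(\fra\cdot\widehat{\cO_{X,\xi}})$ and $\lct_\xi(\frb)=\lct(\frb\cdot\widehat{\cO_{X,\xi}})$, and the symbolic power condition $\frb\subseteq\fra+\frp^{(d)}$ localizes to $\frb\cdot\widehat{\cO_{X,\xi}}\subseteq\fra\cdot\widehat{\cO_{X,\xi}}+\widehat{\frp}^{(d)}$; here the point is that $\frp\cdot\widehat{\cO_{X,\xi}}$ is the maximal ideal $\frm$ of the complete regular local ring $R:=\widehat{\cO_{X,\xi}}$, so $\widehat{\frp}^{(d)}=\frm^d$. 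Thus $e=\dim R$ and we are reduced to proving: if $R$ is a complete regular local ring of characteristic zero with maximal ideal $\frm$ and $\dim R=e$, and $\frb\subseteq\fra+\frm^d$ are nonzero proper ideals of $R$, then $\lct(\frb)-\lct(\fra)\leq e/d$.

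Next I would pass to a convenient coordinate description. By Cohen's structure theorem, $R\cong \kappa\llbracket x_1,\dots,x_e\rrbracket$ for the residue field $\kappa$, and by Proposition~\ref{field_extension} I may enlarge $\kappa$ (replacing it by an algebraically closed field if convenient) without changing either log canonical threshold or the truncation hypothesis. Then, exactly as in the proof of Proposition~\ref{field_extension}, I would truncate: for each large $N$ set $\fra_N=(\fra+\frm^N)\cap \kappa[x_1,\dots,x_e]$ and similarly $\frb_N$, so that $\lct(\fra)=\lim_N\lct_0(\fra_N)$ and $\lct(\frb)=\lim_N\lct_0(\frb_N)$ by Proposition~\ref{limit1}, and for $N>d$ one still has $\frb_N\subseteq\fra_N+\frm^d$ inside the polynomial ring. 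This brings us to the classical setting of ideals in $k[x_1,\dots,x_e]$ cosupported at the origin, where the desired inequality $\lct_0(\frb_N)-\lct_0(\fra_N)\leq e/d$ is precisely the statement of the known comparison result for log canonical thresholds under the operation of adding a power of the maximal ideal — see \cite[Thm.~18.3]{Ketal} or \cite{DEM} — whose proof uses a log resolution together with the fact that $\ord_E(\fra_N)\leq \ord_E(\frb_N)\leq \ord_E(\fra_N+\frm^d)\leq \ord_E(\fra_N)+d\cdot\ord_E(\frm)$ and the bound $\ord_E(\frm)\le \ord_E(K_{X'/X})+1$ coming from $e$ blow-ups. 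Taking $N\to\infty$ then yields the corollary.

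The main obstacle is not the reduction steps, which are routine given Propositions~\ref{completion}, \ref{field_extension} and \ref{limit1}, but rather ensuring that the classical inequality is genuinely available in the form needed and tracking that the constant is exactly $e=\dim(\cO_{X,\xi})$ rather than something like the embedding dimension or $e$ inflated by an extraneous factor. Concretely, the delicate point is the estimate $\ord_E(\frm) \le \ord_E(K_{X'/X})+1$: on a log resolution this holds because the center of $E$ has codimension at most $e$, so $E$ is obtained from $X$ by at most $e$ blow-ups of smooth centers, each contributing $1$ to $\ord_E(K_{X'/X})$, and this is where the dimension $e$ of the local ring enters. Once this is in hand, writing $\lct_0(\frb_N)=\min_i (\kappa_i+1)/b_i$ with $b_i=\ord_{E_i}(\frb_N)\le a_i+d\cdot\ord_{E_i}(\frm)\le a_i+d(\kappa_i+1)$ on a common log resolution gives $(\kappa_i+1)/b_i\ge (\kappa_i+1)/(a_i+d(\kappa_i+1))$, and an elementary manipulation shows $(\kappa_i+1)/a_i - (\kappa_i+1)/b_i \le e/d$ whenever $(\kappa_i+1)/a_i\le 1$; summing up the minima over $i$ and taking the limit in $N$ finishes the argument. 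I would double-check the edge case where $\lct$ is large, though in the relevant applications $\fra\neq\cO_X$ forces $\lct_\xi(\fra)\le e$ anyway, keeping the estimate valid.
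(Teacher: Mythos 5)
Your reduction steps (pass to $\widehat{\cO_{X,\xi}}$ via Proposition~\ref{completion}, note that $\frp$ becomes the maximal ideal so $\frp^{(d)}$ becomes $\frm^d$, enlarge the residue field via Proposition~\ref{field_extension}, truncate via Proposition~\ref{limit1}) coincide with the paper's reduction. Where you diverge is in the final inequality. The paper simply uses the known subadditivity bound
$$\lct_0(\fra+\frm^d)\leq\lct_0(\fra)+\lct_0(\frm^d)=\lct_0(\fra)+\frac{e}{d},$$
citing Proposition~4.7 of \cite{Mus} (a consequence of the summation theorem for multiplier ideals, or its jet-scheme analogue). Since $\frb\subseteq\fra+\frm^d$ gives $\lct_0(\frb)\leq\lct_0(\fra+\frm^d)$, this finishes the proof immediately.

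Your ``obstacle'' paragraph, which attempts to rederive this bound from a log resolution, contains several genuine errors. The chain $\ord_E(\fra_N)\leq\ord_E(\frb_N)\leq\ord_E(\fra_N+\frm^d)$ is wrong on both ends: the hypothesis $\frb_N\subseteq\fra_N+\frm^d$ does not give $\fra_N\subseteq\frb_N$, and the containment $\frb_N\subseteq\fra_N+\frm^d$ actually forces $\ord_E(\frb_N)\geq\ord_E(\fra_N+\frm^d)$, the opposite of what you wrote. Also $\ord_E(\fra_N+\frm^d)=\min(\ord_E(\fra_N),\,d\cdot\ord_E(\frm))$, not a sum, so the last step in the chain is vacuous. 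The remark that $E$ ``is obtained from $X$ by at most $e$ blow-ups of smooth centers'' is not correct in general, and in any case is not where $e$ enters: the relevant fact is $\lct(\frm)=e$, i.e.\ $e\cdot\ord_E(\frm)\leq\ord_E(K_{X'/X})+1$ for every $E$, which is sharper than your stated $\ord_E(\frm)\leq\ord_E(K_{X'/X})+1$. Finally, the proposed ``elementary manipulation'' giving $(\kappa_i+1)/a_i-(\kappa_i+1)/b_i\leq e/d$ does not, as far as I can tell, yield $e/d$ uniformly; and the reference \cite{DEM} is not in the bibliography, so the fallback citation is not anchored. The fix is simply to invoke the subadditivity of the log canonical threshold with respect to sums of ideals (as the paper does) rather than try to re-prove it on a resolution, since that subadditivity is itself a nontrivial theorem.
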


\begin{proof}
 After replacing $\fra$ and $\frb$ by their images in
$\widehat{\cO_{X,\xi}}$, and applying Proposition~\ref{completion},
we see that we may assume that $X={\rm Spec}(k\llbracket
x_1,\ldots,x_e\rrbracket)$ and that $\xi=\frm$ is the closed point.
Note that in this case the hypothesis simply says that
$\frb\subseteq\fra+\frm^d$, and it is enough to show that
$$\lct(\fra+\frm^d)-\lct(\fra)\leq\frac{e}{d}.$$
 Furthermore, using
Proposition~\ref{field_extension}, we may replace $k$ by an
algebraic closure, and therefore assume that $k$ is algebraically
closed.

If we put $\fra_q=\fra+\frm^q$, then by Proposition~\ref{limit1} we
have
$$\lct(\fra)=\lim_{q\to\infty}\lct(\fra_q)
\quad\text{and}\quad
\lct(\fra+\frm^d)=\lim_{q\to\infty}
\lct(\fra_q+\frm^d).$$ Therefore it is enough to prove the corollary
when we replace $\fra$ by $\fra_q$. Since each $\fra_q$ is the
extension of an ideal in $k[x_1,\ldots,x_e]$, it follows that it is
enough to prove the corollary when $X={\mathbf A}_k^e$ and $\xi$
is the origin $0$.
In this case, the assertion is well-known: it follows, for example,
from the fact that
$$\lct_0(\fra+\frm^d)\leq\lct_0(\fra)+\lct_0(\frm^d)=\lct_0(\fra)+\frac{e}{d}$$
(see Proposition~4.7 in \cite{Mus}).
\end{proof}

Note that every complete regular local ring of characteristic zero
(or more generally, containing a field) is isomorphic to a formal
power series ring $k\[x_1,\ldots,x_n\]$. It follows from
Corollary~\ref{reduction1} that the log canonical threshold of every
ideal $\fra$ in our general setting is equal to the log canonical
threshold of an ideal in a formal power series ring whose log
canonical threshold locus is supported at the closed point. We now
show that for every such ideal, its log canonical threshold is equal
to the log canonical threshold of a zero-dimensional ideal in a
polynomial ring over an algebraically closed field.

\begin{proposition}\label{reduction_to_zero-dimensional}
Let $X$ be a connected, regular excellent scheme of dimension $N$.
For every proper nonzero ideal sheaf $\fra$ on $X$, there is an
algebraically closed field $k$ of characteristic zero and an ideal
$\frb$ in a polynomial ring $k[x_1,\ldots,x_n]$ such that $\frb$ is
cosupported at the origin $0$, and $\lct(\fra)=\lct_0(\frb)$.
Moreover,
\begin{enumerate}
\item[i)] We may take $n\leq N$.
\item[ii)] If $\dim\,\LCT(\fra)>0$, then we may take $n<N$.
\item[iii)] If $\dim\,\LCT(\fra)=0$ and $X$ is a scheme of finite type over an
algebraically closed field $K$, then we may take $k=K$.
\end{enumerate}
\end{proposition}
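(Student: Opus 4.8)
The plan is to reduce step by step to the desired situation, using the results already established in this section. First I would use Corollary~\ref{reduction1} to pass to a complete local ring: let $\xi$ be the generic point of an irreducible component of $\LCT(\fra)$ of maximal dimension, and replace $\fra$ by $\fra\cdot\widehat{\cO_{X,\xi}}$. By that corollary we have $\lct(\fra)=\lct(\fra\cdot\widehat{\cO_{X,\xi}})$, the new ambient scheme is the spectrum of a complete regular local ring, and the new log canonical threshold locus is just the closed point. Since this complete regular local ring of characteristic zero is a formal power series ring $k_0\llbracket x_1,\ldots,x_e\rrbracket$ with $e=\dim\cO_{X,\xi}\le N$, and with $e<N$ precisely when $\dim\LCT(\fra)>0$, we have already arranged (i) and (ii) at the level of a formal power series ideal; it remains to descend from power series to polynomials and to enlarge the base field to an algebraically closed one.

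Next I would handle the base field: enlarging $k_0$ to its algebraic closure $\bar k_0$ does not change the log canonical threshold, by Proposition~\ref{field_extension} (formula~(\ref{formula_2})). So assume the base field $k$ is algebraically closed, and let $\frc\subseteq k\llbracket x_1,\ldots,x_e\rrbracket$ be our ideal, cosupported at $\frm$, with $\LCT$-locus equal to $\{\frm\}$. The key point is now to replace $\frc$ by a polynomial ideal with the same log canonical threshold. Write $\frm=(x_1,\ldots,x_e)$ and set $\frc_d=(\frc+\frm^d)\cap k[x_1,\ldots,x_e]$, so that $\frc+\frm^d=\frc_d\cdot k\llbracket x_1,\ldots,x_e\rrbracket$; by Proposition~\ref{completion} we have $\lct(\frc+\frm^d)=\lct_0(\frc_d)$, and by Proposition~\ref{limit1} (using that $\{\frm\}$ is an irreducible component of $\LCT(\frc)$) we get $\lct(\frc)=\lct(\frc+\frm^d)$ for all $d\gg0$. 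Thus $\frb:=\frc_d$ for large $d$ is an ideal in $k[x_1,\ldots,x_e]$ with $\lct_0(\frb)=\lct(\frc)=\lct(\fra)$, and $\frb$ is cosupported at $0$ since $\frm^d\subseteq\frb$. With $n=e$, parts (i) and (ii) follow.

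For part (iii): if $\dim\LCT(\fra)=0$ and $X$ is of finite type over an algebraically closed field $K$, then we may take $\xi$ to be a closed point of $\LCT(\fra)$, so $\widehat{\cO_{X,\xi}}\cong K\llbracket x_1,\ldots,x_N\rrbracket$ already has residue field $K$, and no algebraic closure step is needed; the same truncation argument produces $\frb$ in $K[x_1,\ldots,x_N]$. The main obstacle to be careful about is the passage from the formal power series ideal to its polynomial truncation: one must check that $\frc+\frm^d$ is genuinely the extension of $\frc_d$, which uses that $\frc+\frm^d$ is generated by elements of degree $\le d$, and that Proposition~\ref{limit1}'s "stabilization for $d\gg0$" clause applies, which is exactly why the reduction to $\LCT(\fra)$ having $\{\xi\}$ as an irreducible component (via Corollary~\ref{reduction1}) was made at the outset. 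Everything else is a routine chain of equalities among the invariants already shown to be preserved.
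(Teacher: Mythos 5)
Your overall strategy is the same as the paper's: pass to the completion at a generic point of a component of $\LCT(\fra)$ via Corollary~\ref{reduction1}, then cut down to a polynomial ideal by adding a large power of the maximal ideal (using Proposition~\ref{limit1}), then adjust the base field via Proposition~\ref{field_extension}. There is one difference of ordering, and it creates a gap. You extend the residue field $k_0$ to its algebraic closure at the power-series level \emph{before} truncating, and then invoke the ``moreover'' clause of Proposition~\ref{limit1} for the extended ideal $\frc = \fra'\cdot\bar{k_0}\llbracket x_1,\ldots,x_e\rrbracket$, asserting that its LCT locus is still $\{\frm\}$. That assertion is not among the things Proposition~\ref{field_extension} gives you: formula~(\ref{formula_2}) only controls the \emph{value} $\lct(\frc)$, not the locus $\LCT(\frc)$. (The power-series case of Proposition~\ref{field_extension} is itself proved by the approximation trick, not by a direct flat base change of multiplier ideals, so you cannot simply cite a base-change identity $\cJ(\frc^\lambda)=\cJ((\fra')^\lambda)\cdot\bar k_0\llbracket x\rrbracket$ here either.) As a side remark, the phrase ``cosupported at $\frm$'' applied to $\frc$ before truncation is also not justified --- $\fra'$ may vanish on a larger set; only the LCT locus is known to be the closed point --- though this slip is not load-bearing.

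The fix is short and reproduces the paper's order of operations: apply the second clause of Proposition~\ref{limit1} over $k_0$ to get $\lct(\fra'+\frm_0^d)=\lct(\fra')$ for $d\gg0$, write $\fra'+\frm_0^d=\frc_d\cdot k_0\llbracket x_1,\ldots,x_e\rrbracket$ with $\frc_d$ a polynomial ideal (using that $\fra'+\frm_0^d$ is generated in degree $\le d$ since it contains $\frm_0^d$), and only then pass to the algebraic closure, using Proposition~\ref{completion} and formula~(\ref{formula_1}) of Proposition~\ref{field_extension} at the polynomial level. With this reordering your argument coincides with the paper's proof; parts~(i), (ii), and (iii) then follow exactly as you describe.
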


\begin{proof}
Let $\xi$ be the generic point of an irreducible component of
$\LCT(\fra)$. We fix an isomorphism $\widehat{\cO_{X,\xi}} \cong
k_0\llbracket x_1,\dots,x_n\rrbracket$, where $k_0$ is the residue
field of $\widehat{\cO_{X,\xi}}$. By Corollary~\ref{reduction1}, the
image of $\fra\cdot \widehat{\cO_{X,\xi}}$  via this isomorphism is
an ideal $\~\fra$ of $k_0\llbracket x_1,\dots,x_n\rrbracket$ with
$\lct(\~\fra) = \lct(\fra)$ and $\LCT(\~\fra) = \{\~\frm\}$ (here
$\~\frm$ denotes the maximal ideal of $k_0\llbracket
x_1,\dots,x_n\rrbracket$). Proposition~\ref{limit1} gives
$\lct(\~\fra + \~\frm^d) = \lct(\~\fra)$ for all $d \gg 0$, and
clearly $\LCT(\~\fra + \~\frm^d) = \{\~\frm\}$ for such $d$. We fix a
sufficiently large $d$, and observe that the ideal $\~\fra +
\~\frm^d$ is generated by polynomials of degree $\le d$, hence
$\~\fra + \~\frm^d = \frc\.k_0\llbracket x_1,\dots,x_n\rrbracket$ for
some ideal $\frc \subseteq k_0[x_1,\dots,x_n]$. We put $\frb=\frc
\cdot k[x_1,\ldots,x_n]$, where $k$ is an algebraic closure of
$k_0$. Since $\lct(\~\fra + \~\frm^d) = \lct_0(\frc)$ by
Proposition~\ref{completion}, and $\lct_0(\frb) =\lct_0(\frc)$ by
Proposition~\ref{field_extension}, we deduce that
$\lct(\fra)=\lct_0(\frb)$.

It is clear from the above construction that $n\leq N$, and the
inequality is strict if $\xi$ is not a closed point. This gives i)
and ii) in the proposition. Moreover, if $X$ is of finite type over
the algebraically closed field $K$, and $\xi$ is a closed point,
then $k_0=K$, which gives iii).
\end{proof}

\section{Sets of log canonical thresholds}

Let $k$ be an algebraically closed field of characteristic zero. For
every integer $n \ge 0$ we consider the following subsets of $\R$:
\begin{align*}
\cT_n(k) &:= \{ \lct(X,Y) \mid \text{$X$ smooth variety over $k$ of
dimension $n$,
$\emptyset \ne Y \subseteq X$} \} \\
\cTpol_n(k) &:= \{ \lct_0(\fra) \mid
\fra \subseteq (x_1,\dots,x_n) k[x_1,\dots,x_n] \} \\
\cTser_n(k) &:= \{ \lct(\~\fra) \mid \~\fra \subseteq
(x_1,\dots,x_n) k\llbracket x_1,\dots,x_n\rrbracket \}.
\end{align*}
Note that all these sets are contained in $\Q$. We also consider the
set $\cTiso_n(k) \subseteq \cT_n(k)$ of log canonical thresholds of
pairs $(X,Y)$, with $X$ smooth and $n$-dimensional, $Y$ nonempty,
and such that the log canonical threshold locus $\LCT(X,Y)$ is
zero-dimensional (by convention, we put $\cTiso_0(k)=\{0\}$).

It is clear that we have $\cT_{n-1}(k)\subseteq\cT_n(k)$ for every
$n\geq 1$ (and similar inclusions for the other sets). Indeed, this
follows from the fact that
$\lct(X,Y)=\lct(X\times\A^1,Y\times\A^1)$.

\bigskip

Before discussing some basic properties of the sets we have just
introduced, we make some general remarks about log canonical
thresholds of polynomials of bounded degree. We start by recalling
an interpretation of the log canonical threshold in terms of jet
schemes from \cite{Mus}. Let us fix an algebraically closed field
$k$ of characteristic zero. Recall that if $X$ is a scheme of finite
type over $k$, then the $m^{\rm th}$ jet scheme of $X$ is a scheme
$X_m$ of finite type over $X$, such that the $k$-points of $X_m$ are
in natural bijection with ${\rm Hom}(\Spec\,k[t]/(t^{m+1}),X)$. If
$P\in X$ is a point, then we denote by $X_{m,P}$ the fiber of $X_m$
over $P$. It is proved in Corollary~3.6 in \emph{loc. cit.} that if $X$
is smooth and $Y\hookrightarrow X$ is a closed subscheme containing
the point $P\in X$, then
\begin{equation}\label{formula_lct}
\lct_P(X,Y)=\dim(X)-\sup_{m\geq 0}\frac{\dim(Y_{m,P})}{m+1}.
\end{equation}
An important remark for our applications is the fact that if we have
a family of subschemes ${\mathcal Y}\hookrightarrow S\times {\mathbb
A}^N$ parametrized by $S$ and defined over a subfield $k_0$ of $k$,
then we have a closed subscheme $({\mathcal Y}/S)_m\hookrightarrow
S\times ({\mathbb A}^N)_m$ defined over $k_0$, whose fiber over a
point $t\in S$ is the $m^{\rm th}$ jet scheme of the fiber of
${\mathcal Y}$ over $t$.

We now turn to describing the set of ideals generated in bounded
degree and having constant log canonical threshold. Suppose that $L$
is an algebraically closed field containing our base field $k$.
Since there are $r: ={{n+d}\choose n}-1$ monomials of positive
degree $\leq d$ in $L[x_1,\ldots,x_n]$, it follows that every ideal
$\frb\subseteq (x_1,\ldots,x_n)L[x_1,\ldots,x_n]$ generated in
degree $\leq d$ can be generated by $r$ linear combinations of these monomials.
Hence we can find
a parameter space $W=\A^{r^2}$ such that for every such $L$, we have
a functorial surjective map
$$
\phi_{L}\colon W(L)\to\{\frb\subseteq
(x_1,\ldots,x_n)L[x_1,\ldots,x_n]\mid \frb\,\text{generated in
degree}\,\leq d\}.
$$
Here we denote by $W(L)$ the set of $L$-valued points of $W$.
There is also a closed subscheme ${\mathcal Y}\hookrightarrow
W\times \A^n$ defined over $\QQ$, such that the fiber of
${\mathcal Y}$ over $u\in W(L)$ is the subscheme defined by the
ideal $\phi_L(u)$.

For every $L$ as above, and for every $\lambda\in\R$, consider the
set
$$
W(L)_{<\lambda}:=\{u\in W(L)\mid\lct_0(\phi_L(u))<\lambda\}.
$$
This is a closed subset of $W(L)$ by the semicontinuity property of
log canonical thresholds (see Corollary~9.5.39 in \cite{positivity},
or Theorem~4.9 in \cite{Mus}). On the other hand, consider for every
$i\in\NN$ the closed subset $S_i$ of $W$ such that for every $L$,
the points in $S_i(L)$ correspond to those $u\in W(L)$ having the
fiber over $0$ of the $i^{\rm th}$ jet scheme of $V(\phi_L(u))$ of
dimension $> (i+1)(n-\lambda)$ (the fact that $S_i$ is closed in $W$
follows using the ${\mathbf G}_m$-action on jet schemes, see
Proposition~2.3 in \cite{Mus}). We deduce from (\ref{formula_lct})
that for every $L$, we have $W(L)_{<\lambda}=\bigcup_{i\geq
0}S_i(L)$. A key point is that each $S_i$ is defined over our base
field $k$ (in fact, over the algebraic closure $\overline{\QQ}$ of
$\QQ$).

Let us now fix an extension $L$ of $k$ that is uncountable (and
algebraically closed). Since $\bigcup_{i\geq 0}S_i(L)$ is closed, it
follows that there is $s\in\NN$ such that $S_i\subseteq \cup_{j=0}^s
S_j$ for every $i$. If we put $W_{<\lambda}:=\cup_{j=0}^s S_j$, then
we see that this is a closed algebraic subset of $W$ defined over
$k$, such that for every extension $K$ of $k$, the points of
$W_{<\lambda}(K)\subseteq W(K)$ correspond to $W(K)_{<\lambda}$.
Note that there are only finitely many distinct $W_{<\lambda}$
as $\lambda$ varies in $\R$: this
follows from the fact that over $k$ there are only finitely many
possible log canonical thresholds corresponding to ideals
parametrized by a scheme of finite type over $k$ (see, for example,
Lemma~4.8 in \cite{Mus}). The above discussion implies that the same
will hold over every field:

\begin{proposition}\label{lem:finitely-many-lct}
For every $d$, the set of log canonical
thresholds of ideals of $k[x_1,\dots,x_n]$
generated by polynomials of degree $\leq d$ is finite
and independent of the ground field $k$.
In particular, the set $\cTpol_n(k)$ is independent of the (algebraically closed)
field $k$.
\end{proposition}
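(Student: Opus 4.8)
The plan is to read the statement off from the construction carried out in the paragraphs just above, whose entire purpose was to package the semicontinuity of log canonical thresholds, the jet-scheme formula~(\ref{formula_lct}), and the finiteness result of Lemma~4.8 in \cite{Mus} into the closed subsets $W_{<\lambda}$ of the parameter space $W=\A^{r^2}$. The only genuinely new point is to arrange the deduction so that independence of the base field falls out.

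First I would note that the construction above produces, over any algebraically closed base field, closed subsets $W_{<\lambda}\subseteq\A^{r^2}$; moreover, since the $S_i$ are defined over $\overline{\QQ}$ and the index bound $s$ with $\bigcup_{i\ge 0}S_i=\bigcup_{j=0}^{s}S_j$ may be taken uniform (over an uncountable algebraically closed field an irreducible variety is not a countable union of proper closed subvarieties), these $W_{<\lambda}$ may be chosen defined over $\overline{\QQ}$, with $W_{<\lambda}(K)=\{u\mid \lct_0(\phi_K(u))<\lambda\}$ for every extension field $K$ of $\overline{\QQ}$, and with only finitely many distinct among them as $\lambda$ ranges over $\R$. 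Next I would observe that $\lambda\mapsto W_{<\lambda}$ is non-decreasing, so that for each extension $K/\overline{\QQ}$ the set of log canonical thresholds of ideals in $K[x_1,\dots,x_n]$ generated in degree $\le d$ is exactly the set of ``jump values'' of $\lambda\mapsto W_{<\lambda}(K)$ --- those $\lambda_0$ for which $W_{<\lambda_0}(K)$ is strictly contained in $W_{<\mu}(K)$ for all $\mu>\lambda_0$. Indeed, $u\in W_{<\lambda}(K)$ means precisely $\lct_0(\phi_K(u))<\lambda$, so $\lct_0(\phi_K(u))$ is the largest $\lambda_0$ with $u\notin W_{<\lambda_0}(K)$; since only finitely many $W_{<\lambda}$ occur, this identifies the set of thresholds with the (finite) set of jump values, giving the first assertion.

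Finally, for the independence I would invoke the elementary fact that if $Z'\subseteq Z$ are closed subschemes of $\A^{r^2}_{\overline{\QQ}}$, then $Z'=Z$ if and only if $Z'_K=Z_K$ after base change to an extension $K$ --- faithful flatness of $\overline{\QQ}\to K$ applied to the corresponding inclusion of ideals. Hence the chain of distinct subschemes $W_{<\lambda}$, and therefore its set of jump values, is unchanged by base change, so the set of log canonical thresholds of degree-$\le d$ ideals is the same over every algebraically closed field of characteristic zero (each such field contains $\overline{\QQ}$). Taking the union over all $d$ --- every ideal $\fra\subseteq(x_1,\dots,x_n)k[x_1,\dots,x_n]$ being, by Noetherianity, generated in some bounded degree --- then yields the statement about $\cTpol_n(k)$. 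The only point requiring genuine care, and the closest this argument comes to an obstacle, is this last passage: two algebraically closed fields of characteristic zero need not be comparable, so the comparison must be routed through the common subfield $\overline{\QQ}$, using that the $W_{<\lambda}$ are defined there and are stable under base change; everything else is bookkeeping on top of the construction already in place.
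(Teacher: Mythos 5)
Your proof takes essentially the same route as the paper's: it rests entirely on the $W_{<\lambda}$ construction, their definability over $\overline{\QQ}$, the finiteness of distinct $W_{<\lambda}$, and the characterization of log canonical thresholds as jump values of $\lambda\mapsto W_{<\lambda}$; the faithful-flatness remark is just a careful way of spelling out the paper's one-line assertion that the condition $W_{<\lambda}\neq W_{<\lambda'}$ is independent of the ground field. The only step you glossed over, which the paper dispatches in its opening sentence, is the reduction from $\lct(\fra)$ to $\lct_0$: the parameter space $W$ only parametrizes ideals contained in $(x_1,\dots,x_n)$ and the sets $W_{<\lambda}$ only record $\lct_0$, so one must first note that for an arbitrary ideal generated in degree $\leq d$ over an algebraically closed field, a translation brings a closed point realizing the minimum of $\lct_P$ to the origin without increasing the degrees of the generators.
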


\begin{proof}
After a linear change of coordinates, we see that it is enough to
consider log canonical thresholds at the origin. In this case, it is
enough to run the above argument with $k$ replaced by
$\overline{\QQ}$. We have already mentioned that there are only
finitely many possible log canonical thresholds. Moreover, $\lambda$
really is such a log canonical threshold if and only if $W_{<\lambda}\neq W_{<\lambda'}$
for every $\lambda'>\lambda$. This condition is independent of the ground field, hence 
our assertion.
\end{proof}

\begin{remark}
Of course, the above proposition can be also proved using the description of log canonical thresholds
in terms of log resolutions. However, we decided to give the above argument using jet schemes, since
in the next section we will need to make use of this setting anyway.
\end{remark}

\begin{proposition}\label{prop:all-the-same-subset}
For every $n$ and every $k$ we have $\cT_n(k) = \cTpol_n(k) =
\cTser_n(k)$.
\end{proposition}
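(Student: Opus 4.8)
The plan is to establish the two equalities $\cTpol_n(k)=\cTser_n(k)$ and $\cT_n(k)=\cTpol_n(k)$ separately, using the reduction results from Section 2.

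\medskip

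\noindent\textbf{Step 1: $\cTpol_n(k)\subseteq\cTser_n(k)$ and $\cTser_n(k)\subseteq\bigcup_{m\le n}\cTpol_m(k)$.} For the first inclusion, given an ideal $\fra\subseteq(x_1,\ldots,x_n)k[x_1,\ldots,x_n]$ with $0$ in its cosupport, Proposition~\ref{completion} gives $\lct_0(\fra)=\lct(\fra\cdot k\[x_1,\ldots,x_n\])$, and $\fra\cdot k\[x_1,\ldots,x_n\]$ is a proper ideal contained in $(x_1,\ldots,x_n)k\[x_1,\ldots,x_n\]$; this shows $\cTpol_n(k)\subseteq\cTser_n(k)$. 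For the reverse, let $\~\fra\subseteq(x_1,\ldots,x_n)k\[x_1,\ldots,x_n\]$ be a nonzero proper ideal. Apply Proposition~\ref{reduction_to_zero-dimensional} to $X=\Spec k\[x_1,\ldots,x_n\]$ (which is regular, connected, excellent of dimension $n$): it produces an algebraically closed field $k'$ and an ideal $\frb\subseteq k'[y_1,\ldots,y_m]$ cosupported at the origin with $m\le n$ and $\lct(\~\fra)=\lct_0(\frb)\in\cTpol_m(k')$. By Proposition~\ref{lem:finitely-many-lct}, $\cTpol_m(k')=\cTpol_m(k)$, and since $\cTpol_m(k)\subseteq\cTpol_n(k)$ (padding with extra variables $y_{m+1},\ldots,y_n$, which does not change the log canonical threshold, as $\lct_0(\frb\cdot k[y_1,\ldots,y_n])=\lct_0(\frb)$), we conclude $\lct(\~\fra)\in\cTpol_n(k)$. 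Hence $\cTser_n(k)=\cTpol_n(k)$.

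\medskip

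\noindent\textbf{Step 2: $\cT_n(k)=\cTpol_n(k)$.} The inclusion $\cTpol_n(k)\subseteq\cT_n(k)$ is immediate: an ideal $\fra\subseteq(x_1,\ldots,x_n)k[x_1,\ldots,x_n]$ defines a closed subscheme $Y\subseteq X=\A^n_k$ through $0$, and $\lct_0(\fra)=\lct_0(X,Y)\ge\lct(X,Y)$; but taking an open neighborhood $U$ of $0$ on which the log canonical threshold is computed, we get $\lct_0(\fra)=\lct(U,Y|_U)\in\cT_n(k)$ (here we use that $U$ is again smooth of dimension $n$). For the case $Y=X$ we have $\lct(X,X)=0=\lct_0(\frm^1\cdot\text{?})$... more carefully, $0\in\cTpol_n(k)$ since $\lct_0((x_1,\ldots,x_n)^?)$; actually $0=\lct_0(\fra)$ is not directly achieved, but $\cT_0(k)=\{0\}\subseteq\cT_n(k)$ and $0\in\cTpol_n(k)$ because we may take $\fra=(x_1)^n\cdots$; cleanly, $0\in\cTpol_n$ as the limiting case is handled by the convention, or note $\cTpol_n\supseteq\cTpol_{n-1}\supseteq\cdots\supseteq\cTpol_0=\{0\}$ once we check $0\in\cTpol_n$ directly (e.g.\ it equals $\lct_0$ of a suitable ideal after noting $\lct_0((x_1,\ldots,x_n))=n\geq 1$; in fact one checks $0\in\cTpol_n$ via $\lct_0$ of an ideal of large multiplicity, or simply adds $0$ by the $\lct(X,X)=0$ convention applied within the polynomial ring framework). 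For the nontrivial inclusion $\cT_n(k)\subseteq\cTpol_n(k)$: given a smooth $n$-dimensional $X$ and nonempty $Y\subseteq X$ with $Y\ne X$, let $\fra$ be the ideal sheaf of $Y$; apply Proposition~\ref{reduction_to_zero-dimensional} to get $\lct(X,Y)=\lct(\fra)=\lct_0(\frb)$ with $\frb\subseteq k'[y_1,\ldots,y_m]$, $m\le n$, over an algebraically closed field $k'$, so $\lct(X,Y)\in\cTpol_m(k')=\cTpol_m(k)\subseteq\cTpol_n(k)$.

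\medskip

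\noindent\textbf{Main obstacle.} The one point requiring care is the handling of the degenerate values, in particular confirming $0\in\cTpol_n(k)=\cTser_n(k)$ so that the conventions $\lct(X,X)=0$ are consistent across the three descriptions; this is a boundary bookkeeping issue rather than a real difficulty, resolved by adopting the parallel conventions $\lct_0(0)=0$ in the polynomial and power series settings (or by observing directly that $0$ arises as a log canonical threshold of ideals cosupported at $0$, e.g.\ using ideals with arbitrarily high order of vanishing). Everything substantive is already packaged into Propositions~\ref{completion}, \ref{field_extension}, \ref{lem:finitely-many-lct}, and \ref{reduction_to_zero-dimensional}: the proof is essentially an exercise in assembling them, together with the elementary stability of $\lct_0$ under adding dummy variables and under restriction to open neighborhoods.
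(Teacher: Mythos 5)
Your proof is correct and rests on exactly the same ingredients as the paper's (Propositions~\ref{completion}, \ref{reduction_to_zero-dimensional}, and \ref{lem:finitely-many-lct}), but the organization is slightly less economical. The paper proves the cyclic chain of three one-sided inclusions
$$\cTpol_n(k)\subseteq\cT_n(k)\subseteq\cTser_n(k)\subseteq\cTpol_n(k),$$
using $\cTpol_n\subseteq\cT_n$ (trivial), $\cT_n\subseteq\cTser_n$ (Proposition~\ref{completion}, completing at a closed point $p\in\LCT(\fra)$), and $\cTser_n\subseteq\cTpol_n$ (Propositions~\ref{reduction_to_zero-dimensional} and \ref{lem:finitely-many-lct}). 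You instead establish two equalities separately, so you re-prove $\cTpol_n\subseteq\cTser_n$ from scratch (via Proposition~\ref{completion} at the origin) and prove $\cT_n\subseteq\cTpol_n$ directly (via Proposition~\ref{reduction_to_zero-dimensional}), whereas both of those inclusions would have followed for free from the cycle. The content is the same; the cycle structure just saves one application of the reduction machinery.

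One small cleanup: your handling of the value $0$ wanders unnecessarily. Since the paper has already adopted the conventions $\lct(0)=0$ and $\lct(X,X)=0$, the zero ideal $\fra=0$ is a legitimate element of the defining set of $\cTpol_n(k)$ (it is contained in $(x_1,\ldots,x_n)$), and $\lct_0(0)=0$ directly; likewise $\~\fra=0$ shows $0\in\cTser_n(k)$. There is no need to invoke ideals of high multiplicity (which in any case have strictly positive log canonical threshold), nor to appeal to $\cTpol_0=\{0\}$. The nonzero cases are then exactly the ones covered by the cited propositions, which all assume $\fra\ne 0$.
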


\begin{proof}
The inclusion $\cTpol_n(k)\subseteq\cT_n(k)$ is trivial, while
$\cTser_n(k)\subseteq \cTpol_n(k)$ follows from
Proposition~\ref{reduction_to_zero-dimensional} and
Proposition~\ref{lem:finitely-many-lct}. On the other hand, we have
$\cT_n(k)\subseteq\cTser_n(k)$ by Proposition~\ref{completion}: if
$c=\lct(\fra)$, where $\fra$ is an ideal on $X$, and if
$p\in\LCT(\fra)$, then
$c=\lct_p(\fra)=\lct(\fra\cdot\widehat{\cO_{X,p}})$. Since
$\dim(X)=n$, we have $\widehat{\cO_{X,p}}\simeq
k\[x_1,\ldots,x_n\]$, hence our assertion.
\end{proof}

In light of Propositions~\ref{lem:finitely-many-lct} and
\ref{prop:all-the-same-subset}, from now on we simply write $\cT_n$
for either of $\cT_n(k)$, $\cTpol_n(k)$, or $\cTser_n(k)$.

\begin{proposition}\label{union_of_is}
For every $n\geq 1$ we have
\begin{enumerate}
\item[i)] $\cTiso_n(k)$ is independent of $k$ $($hence we simply
write $\cTiso_n$ instead of $\cTiso_n(k)$$)$.
\item[ii)] $\cT_n=\cT_{n-1}\cup\cTiso_n$.
\item[iii)] $\cT_n=\bigcup_{i=0}^n\cTiso_i$.
\end{enumerate}
\end{proposition}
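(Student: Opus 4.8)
The strategy is to reduce everything to the case of zero-dimensional ideals, combining Proposition~\ref{reduction_to_zero-dimensional} with the field-independence in Proposition~\ref{lem:finitely-many-lct} and the identifications of Proposition~\ref{prop:all-the-same-subset}. The first step — used in all three parts — is to identify
$$
\cTiso_n(k)=\{\lct_0(\frb)\mid \frb\text{ an }\frm\text{-primary ideal of }k[x_1,\ldots,x_n]\},\qquad \frm=(x_1,\ldots,x_n).
$$
The inclusion ``$\supseteq$'' is immediate: for $\frm$-primary $\frb$ one has $V(\frb)=\{0\}$, so $\LCT(\A^n_k,V(\frb))\subseteq\{0\}$ is zero-dimensional and $\lct(\A^n_k,V(\frb))=\lct_0(\frb)$. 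For ``$\subseteq$'', given a pair $(X,Y)$ with $\dim X=n$ and $\LCT(X,Y)$ zero-dimensional, I would pick a closed point $p$ that is an irreducible component of $\LCT(X,Y)$; by Corollary~\ref{reduction1} the ideal $\~\fra:=\cI_Y\cdot\widehat{\cO_{X,p}}$ in $\widehat{\cO_{X,p}}\cong k\llbracket x_1,\ldots,x_n\rrbracket$ has the same log canonical threshold as $(X,Y)$ and $\LCT(\~\fra)=\{\~\frm\}$. By Proposition~\ref{limit1}, $\lct(\~\fra+\~\frm^d)=\lct(\~\fra)$ for $d\gg0$; for such $d$, $\~\fra+\~\frm^d$ contains $\~\frm^d$ and is generated by polynomials of degree $\le d$, so it extends an ideal $\frc\subseteq k[x_1,\ldots,x_n]$, which contains $\frm^d$ by faithful flatness of completion and hence is $\frm$-primary, and Proposition~\ref{completion} gives $\lct_0(\frc)=\lct(X,Y)$.

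For (i), observe that $\frm$-primary ideals are exactly the proper ideals $\frb$ with $\frm^d\subseteq\frb$ for some $d$, and such an ideal is generated in degree $\le d$ (truncate generators modulo $\frm^d$). Thus $\cTiso_n(k)=\bigcup_{d\ge1}\Lambda_d(k)$, with $\Lambda_d(k)$ the set of $\lct_0(\frb)$ for proper ideals $\frb\supseteq\frm^d$, and it suffices to show each $\Lambda_d(k)$ is independent of $k$. For this I would re-run the proof of Proposition~\ref{lem:finitely-many-lct}, but intersect the parameter space $W=\A^{r^2}$ with the locus $W^{(d)}$ of ideals containing $\frm^d$: this is a constructible subset of $W$ defined over $\QQ$ (membership of the finitely many degree-$d$ monomials is a $\QQ$-rational, field-extension-compatible condition), the loci $W_{<\lambda}$ are defined over $\overline\QQ$, and since only finitely many thresholds occur, ``$\lambda\in\Lambda_d(k)$'' amounts to $W^{(d)}\cap W_{<\lambda}\ne W^{(d)}\cap W_{<\lambda'}$ for $\lambda'$ the next threshold, which is checked on $\overline\QQ$-points and is therefore independent of $k$. (Equivalently, parametrize the ideals containing $\frm^d$ by the finite-type $\QQ$-scheme of ideals of the fixed Artinian algebra $\QQ[x_1,\ldots,x_n]/(x_1,\ldots,x_n)^d$ and apply the jet-scheme set-up of \S 3 verbatim.)

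For (ii), $\cT_{n-1}\cup\cTiso_n\subseteq\cT_n$ is clear ($\cTiso_n\subseteq\cT_n$ by definition, $\cT_{n-1}\subseteq\cT_n$ since $\lct(X,Y)=\lct(X\times\A^1,Y\times\A^1)$). Conversely, let $c=\lct(X,Y)$ with $\dim X=n$ and $\emptyset\ne Y\subseteq X$; if $Y=X$ then $c=0\in\cT_{n-1}$, and otherwise $\fra:=\cI_Y$ is proper and nonzero. If $\dim\LCT(\fra)=0$ then $c\in\cTiso_n$ by definition; if $\dim\LCT(\fra)>0$ then Proposition~\ref{reduction_to_zero-dimensional}(ii) gives, over some algebraically closed field, an ideal in fewer than $n$ variables cosupported at the origin with log canonical threshold $c$, so $c\in\cTpol_{n-1}=\cT_{n-1}$ by Propositions~\ref{lem:finitely-many-lct} and \ref{prop:all-the-same-subset}. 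Part (iii) then follows from (ii) by induction on $n$, since $\cT_0=\{0\}=\cTiso_0$ and $\cT_n=\cT_{n-1}\cup\cTiso_n=\bigcup_{i=0}^{n-1}\cTiso_i\cup\cTiso_n$.

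The only real work is in (i): parts (ii) and (iii) are formal once Proposition~\ref{reduction_to_zero-dimensional} is available, while (i) requires folding the ``$\frm$-primary'' condition into the parameter-space argument of Proposition~\ref{lem:finitely-many-lct} and verifying that it descends to $\overline\QQ$. I expect the bookkeeping around the constructibility and $\QQ$-rationality of $W^{(d)}$ (or, in the alternative, the standard finiteness of the scheme of ideals of a fixed Artinian algebra) to be the only delicate point, with no new idea beyond \S 3 required.
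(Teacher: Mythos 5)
Your argument is correct and matches the paper's: parts (ii) and (iii) are handled identically via Proposition~\ref{reduction_to_zero-dimensional}, and part (i) is the same parameter-space descent to $\overline\QQ$-points, preceded by the same reduction to ideals cosupported at the origin. The only cosmetic difference is in (i): the paper forces cosupport by putting $x_1^d,\ldots,x_n^d$ directly into the generating set (a parameter space $W'$ of ideals containing these monomials), thereby sidestepping the constructibility and $\QQ$-rationality of the locus $W^{(d)}$ that you rightly flag as the delicate point and which your alternative parametrization by ideals of the Artinian ring $\QQ[x_1,\ldots,x_n]/(x_1,\ldots,x_n)^d$ also handles cleanly.
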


\begin{proof}
The proof of~i) is similar to that of
Proposition~\ref{lem:finitely-many-lct}, so we just describe the
required modifications. If $k\subseteq K$ are algebraically closed
fields, then the inclusion $\cTiso_n(k)\subseteq\cTiso_n(K)$ is
clear: it is enough to use Proposition~\ref{field_extension}.

 For the reverse inclusion, note that if $c\in
\cTiso_n(K)$, then by
Proposition~\ref{reduction_to_zero-dimensional} we can write
$c=\lct_0(\fra)$ for an ideal $\fra\subseteq
(x_1,\ldots,x_n)\.K[x_1,\ldots,x_n]$ cosupported at $\{0\}$. Fix
$d\gg 0$ such that $x_i^d\in\fra$ for every $i$, and such that
$\fra$ is generated in degree $\leq d$. We mimic the construction
preceding Proposition~\ref{lem:finitely-many-lct}, replacing
$W=\A^{r^2}$ by the parameter space $W'=\A^{(r-n)^2}$ defined so
that $W'(K)$ gives all ideals generated in degree $\leq d$ and
containing all the $x_i^d$. We see that there is a locally closed
subset $A$ of $W'$ defined over $\overline{\QQ}$, such that $A(K)$
corresponds to those ideals having log canonical threshold $c$.
Since $A\neq\emptyset$, we have $A(k)\neq\emptyset$, hence we find a
zero-dimensional ideal $\frb\subseteq
(x_1,\ldots,x_n)\.k[x_1,\ldots,x_n]$ cosupported at the origin and
such that $c=\lct(\frb)$. We clearly have $\LCT(\frb)=\{0\}$, hence
$c\in\cTiso_n(k)$. This completes the proof of i).

For~ii), fix an algebraically closed field $k$. It follows from
definition that $\cT_{n-1}\cup\cTiso_n\subseteq \cT_n$. Suppose now
that $c\in\cT_n\smallsetminus\cTiso_n$. We have
$c=\lct(X,Y)$, where $X$ is an $n$-dimensional smooth variety over
$k$ and $\dim\,\LCT(X,Y)>0$. It follows from
Proposition~\ref{reduction_to_zero-dimensional} that
$c\in\cT_{n-1}$, hence ii). The assertion in~iii) is an immediate
consequence.
\end{proof}

\begin{remark}
The set $\cTiso_n$ is dense in $\cT_n$. In fact, every $c\in\cT_n$
is the limit of a decreasing (possibly constant) sequence in
$\cTiso_n$. Indeed, if $c=\lct_x(\fra)$, for an ideal
$\fra\subseteq\frm_x$ on the $n$-dimensional smooth variety $X$
(here $\frm_x$ is the ideal of the point $x$), then
Proposition~\ref{limit1} shows that
$\{\lct_x(\fra+\frm_x^{\ell})\}_{\ell}$ is a decreasing sequence in
$\cTiso_n$ converging to $c$.
\end{remark}

We now define similar sets by considering only pairs $(X,Y)$, with
$Y$ locally principal. More precisely, we denote by $\cHT_n(k)$ the
set of all $\lct(X,Y)$, where $X$ is a smooth variety over $k$ of
dimension $n$ and $Y$ is a nonempty closed subscheme locally
defined by one equation. We similarly put
\begin{align*}
 {\cHTpol_n(k)} &:= \{ \lct_0(f) \mid
f \in (x_1,\dots,x_n) k[x_1,\dots,x_n] \} \\
{\cHTser_n(k)} &:= \{ \lct(\~f) \mid \~f \in (x_1,\dots,x_n)
k\llbracket x_1,\dots,x_n\rrbracket \}.
\end{align*}
We also define $\cHTiso_n(k) \subseteq \cHT_n(k)$ by
requiring that $\LCT(X,Y)$ is zero-dimensional. For future
reference, we record in the following proposition some easy
properties of the sets $\cT_n$ and $\cHT_n$.

\begin{proposition}\label{easy_properties}
Given $n\geq 1$, we have
\begin{enumerate}
\item[i)] $\cT_n\subseteq [0,n]\cap\QQ$.
\item[ii)] $\cHT_n(k)=\cT_n(k)\cap [0,1]$
and $\cHTpol_n(k)=\cTpol_n(k)\cap [0,1]$.
\item[iii)] For every positive integer
$m$, we have $\frac{1}{m}\.\cT_n\subseteq \cT_n$ and
$\frac{1}{m}\.\cHT_n(k)\subseteq\cHT_n(k)$.
\item[iv)] $\cT_n\subseteq n\cdot \cHT_n(k)$.
\end{enumerate}
\end{proposition}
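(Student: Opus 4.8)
The plan is to verify the four statements in turn, reducing each to the structural results established earlier (Propositions~\ref{reduction_to_zero-dimensional}, \ref{completion}, \ref{field_extension}) together with elementary manipulations of resolutions.

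For~i), after passing to the completion at a point of $\LCT(X,Y)$ via Proposition~\ref{completion}, one is reduced to an ideal $\fra$ in $k\[x_1,\ldots,x_n\]$, and then by Proposition~\ref{limit1} (as in the remark following Proposition~\ref{union_of_is}) to the case of an ideal $\frb\subseteq(x_1,\ldots,x_n)k[x_1,\ldots,x_n]$ with $\lct_0(\frb)=\lct(X,Y)$. Since $\frb\subseteq(x_1,\ldots,x_n)$, Proposition~\ref{prop2}~3) gives $\lct_0(\frb)\le\lct_0((x_1,\ldots,x_n))=1>0$ only when $n\ge 1$; more precisely, taking a single power $\frm^N\subseteq\frb$ for $N\gg 0$ and using that $\lct_0(\frm^N)=n/N$ shows $\lct_0(\frb)\le\lct_0(\frm^N)+(\text{something})$ — cleaner is to bound directly: blow up the origin to see $\lct_0((x_1,\ldots,x_n))=1$, whence $\lct(X,Y)\le n$ follows from $\frb\subseteq\frm$ is too weak; instead use that $\frb$ contains some $x_i^{d}$ is not automatic. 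The honest argument: the log canonical threshold is bounded above by that of any divisor in the linear system, and comparing with the exceptional divisor $E$ of the blow-up of a point contained in $\LCT$, $\ord_E(\fra)\ge 1$ and $\ord_E(K)=n-1$ gives $\lct\le n$. Rationality is Proposition~\ref{prop2} (all jumping numbers are rational), and nonnegativity is the convention $\lct\ge 0$; so $\cT_n\subseteq[0,n]\cap\QQ$.

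For~ii), if $Y$ is a Cartier divisor then $\lct(X,Y)\le 1$ by comparing with the defining divisor itself (the coefficient $a$ of a component equals its multiplicity, and $(\kappa+1)/a\le 1$ on a component of $Y$ meeting it with multiplicity $\ge 1$ when $\kappa=0$), giving $\cHT_n(k)\subseteq\cT_n(k)\cap[0,1]$; conversely, given $c=\lct_0(\fra)\le 1$ with $\fra=(f_1,\ldots,f_s)$, a general $k$-linear combination $g=\sum\lambda_if_i$ has the same log canonical threshold as $\fra$ in the range $\le 1$ (this is the standard Bertini-type statement for multiplier ideals, which extends to our setting since we may work on a log resolution where the statement is about generic hyperplane sections of a simple normal crossings situation). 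The polynomial version is the same argument applied at the origin in affine space. For~iii), the identity $\lct(\fra^{1/m}\text{-analogue})$: given $\frb$ with $\lct_0(\frb)=c$, the ideal $\frb^m$ satisfies $\lct_0(\frb^m)=c/m$ since on a common log resolution the relevant divisors get multiplied by $m$; in the hypersurface case $f\mapsto f^m$ does the same job, and $f^m$ is again locally one equation. For~iv), given $c=\lct(X,Y)\in\cT_n$, realize it as $\lct_0(\frb)$ for $\frb\subseteq\frm$ in $k[x_1,\ldots,x_n]$ cosupported at $0$ with $x_i^d\in\frb$; take a general linear combination $g$ of the generators together with the $x_i^d$, so that $\frac{1}{n}g$-type scaling: one shows $\lct_0(g)\ge\lct_0(\frb)/n=c/n$ by the standard inequality $\lct_0(\frb)\le n\cdot\lct_0(g)$ valid when $g$ is a sufficiently general element of $\frb$ (this is exactly the content that one general element carries at least a $1/n$ fraction of the singularity, proved by intersecting with $n$ general hypersurfaces, or by Skoda-type estimates), so $c/n\in\cHT_n(k)$, i.e. $c\in n\cdot\cHT_n(k)$.

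The main obstacle is the Bertini/general-element statement underlying~ii) and~iv): that a general $k$-linear combination of generators of $\fra$ has $\lct_0$ equal to $\min(1,\lct_0(\fra))$ (for~ii)) and at least $\lct_0(\fra)/n$ (for~iv)). Over an algebraically closed field this is classical (see \cite{positivity}, \S 9.2.B and the Skoda-type bounds), but I should note that it transfers to our excellent setting because the assertion can be checked on a log resolution, where it becomes a statement about generic members of a linear system relative to a simple normal crossings divisor — and that only uses the generic smoothness already built into Theorem~\ref{thm1}. The remaining items~i) and~iii) are then routine divisor bookkeeping on a common log resolution.
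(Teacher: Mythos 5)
Parts~i), ii), and~iii) are essentially the paper's arguments. For~i), after some false starts, you land on the correct computation via the blow-up of a point: $\lct \le (\ord_E(K_{X'/X})+1)/\ord_E(\fra) = n/\ord_E(\fra) \le n$. For~ii) you use Bertini (Prop.~9.2.28 of the Lazarsfeld reference) for the nontrivial inclusion, and for~iii) you use $\lct(\fra^m) = \frac{1}{m}\lct(\fra)$, exactly as in the paper.

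Part~iv), however, contains a genuine gap. You take a general element $g$ of $\frb$ and assert a Skoda-type inequality $\lct_0(g) \ge \lct_0(\frb)/n = c/n$, and then conclude ``so $c/n \in \cHT_n(k)$.'' This conclusion does not follow: showing that \emph{some} hypersurface has log canonical threshold $\ge c/n$ does not produce a hypersurface with log canonical threshold exactly $c/n$, which is what membership in $\cHT_n(k)$ requires. In fact, Bertini gives $\lct_0(g) = \min(1, c)$ for general $g \in \frb$, which is typically much larger than $c/n$. The inequality goes the wrong way. The paper's argument for~iv) is a pure deduction from~i)--iii) and avoids this: given $c \in \cT_n$, part~iii) with $m = n$ yields $c/n \in \cT_n$; part~i) gives $c \le n$, hence $c/n \le 1$; and then part~ii) gives $c/n \in \cT_n \cap [0,1] = \cHT_n(k)$, so $c \in n \cdot \cHT_n(k)$. (Equivalently, you could salvage your line of argument by taking a general element of $\frb^n$ rather than of $\frb$; since $\lct_0(\frb^n) = c/n \le 1$, Bertini gives $\lct_0(g) = c/n$ on the nose.)
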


\begin{proof}
All assertions are well-known. For~i) one uses the fact that if
$\fra$ is an ideal sheaf on a smooth variety $X$ vanishing at the point
$P$, and if $E$ is the exceptional divisor on the blowing-up $X'$ of
$X$ at $P$, then
$$\lct_P(\fra)\leq\frac{\ord_E(K_{X'/X})+1}{\ord_E(\fra)}
=\frac{n}{\ord_E(\fra)}\leq n.$$

For the assertion in~ii) we use the fact
that if $\fra\subset\cO_X$ is a proper ideal, with $X$ affine, and
if $f\in\fra$ is a general linear combination of a set of
generators, then
 $\cJ(\fra^{\lambda})=\cJ(f^{\lambda})$ for every $\lambda<1$
 (see Prop. 9.2.28 in \cite{positivity}).
 In particular, if $\lct(\fra)\leq 1$, then $\lct(\fra)=\lct(f)$.

Note that~iii) follows
from the fact that $\lct(\fra^m)=\tfrac 1m \lct(\fra)$, and~iv)
is a consequence of~i)--iii).
\end{proof}

\begin{corollary}\label{cor:independence-of-k}
For every $n$, the set $\cHT_n(k)$ is independent of the
algebraically closed field $k$ $($hence we denote it simply by
$\cHT_n$$)$.
\end{corollary}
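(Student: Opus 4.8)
The plan is to deduce this immediately from the results already established. By Proposition~\ref{easy_properties}~ii), we have $\cHT_n(k) = \cT_n(k) \cap [0,1]$. By Proposition~\ref{lem:finitely-many-lct} together with Proposition~\ref{prop:all-the-same-subset}, the set $\cT_n(k)$ is independent of the algebraically closed field $k$; we have agreed to write simply $\cT_n$ for this common set. Intersecting with the fixed interval $[0,1]$ preserves this independence, so $\cHT_n(k) = \cT_n \cap [0,1]$ is the same for every algebraically closed field $k$ of characteristic zero.

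First I would record the identity $\cHT_n(k) = \cT_n(k) \cap [0,1]$ from Proposition~\ref{easy_properties}~ii). Then I would invoke the independence of $\cT_n(k)$ on $k$, already proved above, to rewrite the right-hand side as $\cT_n \cap [0,1]$, which manifestly does not depend on $k$. This completes the argument.

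There is essentially no obstacle here: the statement is a formal consequence of Proposition~\ref{easy_properties}~ii) and the independence of $\cT_n$ from the base field. The only point worth a word of care is that Proposition~\ref{easy_properties}~ii) identifies $\cHT_n(k)$ with $\cT_n(k) \cap [0,1]$ rather than, say, with $\cTpol_n(k) \cap [0,1]$; but by Proposition~\ref{prop:all-the-same-subset} these coincide anyway, so there is nothing to reconcile.

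\begin{proof}
By Proposition~\ref{easy_properties}~ii), $\cHT_n(k)=\cT_n(k)\cap[0,1]$. Since, by Propositions~\ref{lem:finitely-many-lct} and \ref{prop:all-the-same-subset}, the set $\cT_n(k)$ does not depend on the algebraically closed field $k$ and equals $\cT_n$, we conclude that $\cHT_n(k)=\cT_n\cap[0,1]$ is independent of $k$.
\end{proof}
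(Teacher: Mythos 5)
Your proof is correct and is exactly the argument the paper gives: Proposition~\ref{easy_properties}~ii) identifies $\cHT_n(k)$ with $\cT_n(k)\cap[0,1]$, and the already-established field-independence of $\cT_n(k)$ does the rest.
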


\begin{proof}
The assertion follows from Proposition~\ref{easy_properties} ii)
and the analogous property of $\cT_n(k)$.
\end{proof}

\begin{corollary}\label{cor:all-the-same-subset}
For every algebraically closed field $k$, we have
$$\cHT_n(k)=\cHTpol_n(k)=\cHTser_n(k).$$
\end{corollary}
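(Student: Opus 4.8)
The plan is to deduce the corollary from its analogue for arbitrary ideals, Proposition~\ref{prop:all-the-same-subset}, together with the comparisons between the ``one equation'' and the ``arbitrary ideal'' versions already recorded in Proposition~\ref{easy_properties} ii). First I would dispose of the equality $\cHT_n(k)=\cHTpol_n(k)$: Proposition~\ref{easy_properties} ii) gives $\cHT_n(k)=\cT_n(k)\cap[0,1]$ and $\cHTpol_n(k)=\cTpol_n(k)\cap[0,1]$, while $\cT_n(k)=\cTpol_n(k)$ by Proposition~\ref{prop:all-the-same-subset}; intersecting the last equality with $[0,1]$ gives the claim. It therefore remains only to prove $\cHTser_n(k)=\cHTpol_n(k)$ (both sets contain $0$ by the convention $\lct(0)=0$, so it suffices to compare their nonzero elements).

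For the inclusion $\cHTpol_n(k)\subseteq\cHTser_n(k)$ I would take a nonzero $f\in(x_1,\dots,x_n)k[x_1,\dots,x_n]$ and apply Proposition~\ref{completion} with $X=\A^n_k$ and $\xi=0$; since $\widehat{\cO_{X,0}}=k\llbracket x_1,\dots,x_n\rrbracket$, this yields $\lct_0(f)=\lct\bigl(f\cdot k\llbracket x_1,\dots,x_n\rrbracket\bigr)$, and the right-hand side belongs to $\cHTser_n(k)$ because, viewed as a power series, $f$ lies in $(x_1,\dots,x_n)k\llbracket x_1,\dots,x_n\rrbracket$. For the reverse inclusion $\cHTser_n(k)\subseteq\cHTpol_n(k)$, let $\~f\in(x_1,\dots,x_n)k\llbracket x_1,\dots,x_n\rrbracket$ be nonzero and put $c=\lct(\~f)$. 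Since $\~f$ generates a nonzero proper ideal of $k\llbracket x_1,\dots,x_n\rrbracket$, we have $c\in\cTser_n(k)$, and $\cTser_n(k)=\cTpol_n(k)$ by Proposition~\ref{prop:all-the-same-subset}. On the other hand $c\le1$: this is the standard elementary bound on log canonical thresholds of hypersurfaces, which holds verbatim here since log resolutions are available over excellent schemes by Theorem~\ref{thm1}. Concretely, on a log resolution $g\colon X'\to X=\Spec k\llbracket x_1,\dots,x_n\rrbracket$ of $(X,V(\~f))$, writing $g^{-1}(\~f)=\cO(-F)$, the proper transform of an irreducible component of $V(\~f)$ appears as a non-exceptional component $E$ of $F$ with $\ord_E(K_{X'/X})=0$ and $\ord_E(\~f)\ge1$ whose center on $X$ contains the closed point, so the formula for the log canonical threshold already contains the term $(\ord_E(K_{X'/X})+1)/\ord_E(\~f)=1/\ord_E(\~f)\le1$. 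Hence $c\in\cTpol_n(k)\cap[0,1]=\cHTpol_n(k)$ by Proposition~\ref{easy_properties} ii). Combining this with the equality of the first paragraph yields $\cHT_n(k)=\cHTpol_n(k)=\cHTser_n(k)$.

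The argument is essentially bookkeeping on top of results already established in this section, so I do not expect a genuine obstacle. The one step that is not a pure formality is the inequality $c=\lct(\~f)\le1$, since it is the single place where the principal (rather than general ideal) nature of $\~f$ is used; but as indicated it is the familiar bound for hypersurfaces and it transfers without change to regular excellent schemes.
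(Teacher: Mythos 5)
Your proof is correct and takes essentially the same route as the paper: reduce $\cHT_n(k)=\cHTpol_n(k)$ to Propositions~\ref{prop:all-the-same-subset} and~\ref{easy_properties}~ii), get one inclusion between the polynomial and power-series sets from Proposition~\ref{completion}, and get the reverse from Proposition~\ref{prop:all-the-same-subset} together with the bound $\lct\leq 1$ for principal ideals. The paper cites Propositions~\ref{prop:all-the-same-subset} and~\ref{easy_properties}~ii) for $\cHTser_n(k)\subseteq\cHT_n(k)$ and leaves the inequality $\lct(\~f)\le 1$ implicit, whereas you spell out the short log-resolution argument for it over $\Spec k\llbracket x_1,\dots,x_n\rrbracket$; this is a reasonable thing to make explicit, as it is the one step that genuinely uses the principal-ideal hypothesis and that must be checked to carry over to the excellent-scheme setting.
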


\begin{proof}
The fact that $\cHT_n(k)=\cHTpol_n(k)$ follows from
Proposition~\ref{easy_properties}~ii). The inclusion
$\cHTser_n(k)\subseteq\cHT_n(k)$
follows from Propositions~\ref{prop:all-the-same-subset}
and~\ref{easy_properties}~ii), and the reverse
inclusion follows from Proposition~\ref{completion}.
\end{proof}

In light of the above two corollaries, we simply write $\cHT_n$ for
either of the sets $\cHT_n(k)$, $\cHTpol_n(k)$ or $\cHTser_n(k)$.
Note that we have $\cHT_n=\cT_n\cap [0,1]$.

\begin{remark}\label{rem3_1}
We also have $\cHTiso_n(k)=\cTiso_n(k)\cap [0,1)$ for $n\geq 2$.
Indeed, note first that for every locally principal ideal sheaf $\fra$, we
have $\cJ(\fra)=\fra$, hence $1\not\in \cHTiso_n(k)$. On the other
hand, suppose that we have $\lct(\fra)< 1$, where $\fra\subset\cO_X$ is a
proper ideal sheaf, with $X$ affine, nonsingular and $n$-dimensional, and let
$f\in\fra$ be a general linear combination of a system of generators of $\fra$. Since $\cJ(\fra^{\lambda})=\cJ(f^{\lambda})$
for every $\lambda< 1$, we see that $\lct(f)=\lct(\fra)$ and
$\LCT(f)=\LCT(\fra)$.

Together with Proposition~\ref{union_of_is}, this implies that
$\cHTiso_n(k)$ is independent of $k$, and therefore we simply write
$\cHTiso_n$. We also deduce that $\cHT_n=\cHT_{n-1}\cup\cHTiso_n$
for every $n\geq 1$.
\end{remark}

\section{Limits of log canonical thresholds via ultrafilter constructions}

Our main goal in this section is to prove Theorems~\ref{thm_main1}
and \ref{thm_main2}. Recall first some terminology. If $T$ is a
subset of $\R$, then an element $r \in \R$ is said to be a point of
accumulation (or {\it accumulation point}) of $T$ if $T \cap (r -
\epsilon,r + \epsilon) \setminus \{r\} \ne \emptyset$ for every
$\epsilon > 0$. We say that $r$ is a point of accumulation {\it from
above} (resp., {\it from below}) of $T$ if $T \cap (r,r+\epsilon)
\ne \emptyset$ (resp., $T \cap (r - \epsilon,r) \ne \emptyset$) for
every $\epsilon > 0$.

The key arguments in this section are based on ultrafilter 
constructions.
For basic definitions and properties, we refer to \cite{Gol}.
In the following we fix a non-principal ultrafilter $\cU$ on the set of nonnegative
integers $\NN$. We say that a property
${\mathcal P}(m)$ holds for \emph{almost all} $m\in\N$ if the set
$\{m\in\N\mid {\mathcal P}(m)\,\text{holds}\}$ belongs to $\cU$.

The {\it ultraproduct} (with respect to the ultrafilter $\cU$)
of a sequence of sets $\{A_m\}_{m\in\NN}$ will be denoted by
$[A_m]$. Recall that this consists of equivalence classes of
sequences $(a_m)_m$, where $(a_m)_m\sim (b_m)_m$ if
$a_m=b_m$ for almost all $m$. The 
class of a sequence of elements $(a_m)_m$ in $[A_m]$
will be denoted by $[a_m]$.

Given a sequence of functions $f_m\colon A_m \to B_m$, we
denote by $[f_m]\colon [A_m] \to[] [B_m]$ the function that takes $[a_m]$ to
$[f(a_m)]$.
If $A_m=A$ for all $m$, then the corresponding ultraproduct is
simply denoted by $\ltightstar{A}$, and called the
\emph{non-standard extension} of $A$. Note that there is an injective map
$A\hookrightarrow \ltightstar{A}$ that takes $a$ to the class of
$(a,a,\ldots)$. Similarly, a function $u\colon A\to B$ has a
non-standard extension $\ltightstar{u}\colon \ltightstar{A}\to
\ltightstar{B}$.
If $(A_m)_m$ is a sequence of sets, and if $B_m\subseteq A_m$ for
every $m$ (in fact, it is enough to have this inclusion for almost
all $m$), then $[B_m]$ can be considered a subset of $[A_m]$. The
subsets of $[A_m]$ that arise in this way are called
\emph{internal}. Similarly, if $(A_m)_m$ and $(B_m)_m$ are sequences
of sets, then an \emph{internal function} $f\colon
[A_m]\longrightarrow  [B_m]$ is a function of the form $f=[f_m]$ for
suitable $f_m\colon A_m\to B_m$.

As a general principle one observes that if $A$ has
an algebraic structure, then $\ltightstar{A}$ has a similar
structure, too. For example, $\lstar{\RR}$ is an ordered field, and if $k$ is an
algebraically closed field, then $\ltightstar{k}$ is an
algebraically closed field, too. The operations are defined
component-wise, for example $[a_m]+[b_m] =[a_m+b_m]$.

We now turn to the case that will be of particular interest to us.
Suppose that $k$ is a field, and that we have a sequence of
polynomials $f_m\in k[x_1,\ldots,x_n]$. We can view any polynomial
$g \in k[x_1,\ldots,x_n]$ as a function $\N^n \to k$ given by sending the tuple
$(m_1,\ldots,m_n)$ to the coefficient of the monomial
$x_1^{m_1}\cdots x_n^{m_n}$ in $g$. The sequence $(f_m)_m$ gives an
\emph{internal polynomial} $F=[f_m]\in
\ltightstar{(k[x_1,\ldots,x_n])}$, that we can view as a function
$\ltightstar{(\N^n)}\to \ltightstar{k}$. Since we have a natural
inclusion $\N^n\subset\ltightstar{(\N^n)}$, we can restrict $F$ to
$\N^n$ to get a formal power series $\~f\in \ltightstar{k}\llbracket
x_1,\ldots,x_n \rrbracket$. Hence we have the following natural maps
$$
    k[x_1,\ldots,x_n]  \hookrightarrow {\ltightstar{(k[x_1,\ldots,x_n])}}
     \overset{\rho}\to {\ltightstar{k}\llbracket
     x_1,\ldots,x_n\rrbracket}.
$$
Note that we have a natural inclusion
$\ltightstar{k}[x_1,\ldots,x_n]\subset\ltightstar{(k[x_1,\ldots,x_n])}$
such that the restriction of $\rho$ to
$\ltightstar{k}[x_1,\ldots,x_n]$ is the usual inclusion of the
polynomial ring in the formal power series ring. We also remark
that if $f_m(0)=0$ for almost all $m$, then $\~f$ lies in the maximal
ideal, i.e. $\~f(0)=0$.

The above  construction can be generalized to ideals. Given a
sequence of ideals $\fra_m \subseteq k[x_1,\dots,x_n]$, we get the
\emph{internal ideal} $A = [\fra_m] \subseteq
\ltightstar(k[x_1,\dots,x_n])$, with
$$
A=\{F=[f_m]\mid \text{$f_m\in\fra_m$ for almost all $m$} \}.
$$
Note that $A$ is indeed an ideal in the ring of internal polynomials
$\ltightstar{(k[x_1,\ldots,x_n])}$. We denote by $\~\fra \subseteq
\*k\[x_1,\dots,x_n\]$ the ideal generated by $\rho(A)$, that is, the
ideal generated by the restrictions of the elements in $A$ to
$\N^n$. We will refer to $\~\fra$ as \emph{the ideal of power series
associated to} $A$.

We say that an internal polynomial $G = [g_m] \in
\*(k[x_1,\dots,x_n])$ has {\it bounded degree} if there exists an
integer $d$ such that $\deg(g_m)\leq d$ for almost all $m$. Note
that every internal polynomial $G$ of bounded degree can be
represented as $G = [g_m']$, where $g_m'$ are polynomials of degree
uniformly bounded by some $d$, and thus it can be viewed as a
function $\ltightstar{(\N^n)} \to \*k$ that is zero away from a
finite subset of $\N^n$. As such, $G$ can be naturally identified
with a polynomial $g \in \*k[x_1,\dots,x_n]$ (of course, $g$ is
independent of the choice of the polynomials $g_m'$ chosen to
represent $G$).

An internal ideal $B = [\frb_m] \subseteq \*(k[x_1,\dots,x_n])$ is
\emph{generated in degree} $\leq d\in\NN$ if $\frb_m$ can be
generated in degree $\leq d$ for almost all $m$. Since
$\dim_kk[x_1,\ldots,x_n]_{\leq d}={{d+n}\choose {n}} = r+1$, we deduce
that if $B$ is generated in degree $\leq d$, then we may assume that
every $\frb_m$ is generated by a set of $r+1$ polynomials of degree
$\le d$. Thus we can write $\frb_m = (g_{0,m},\dots,g_{r,m})$, with
$\deg g_{i,m} \le d$. Then we see that $B$ is generated by the internal
polynomials $G_i = [g_{i,m}]$ (for $0 \le i \le r$), each having
degree $\leq d$. It follows that the ideal $\~\frb$ of power series
associated to $B$ is the extension of the ideal $\frb\subseteq
\ltightstar{k}[x_1,\ldots,x_n]$ generated by the polynomials in the ring
$\ltightstar{k}[x_1,\ldots,x_n]$ naturally identified with
the $G_i$. We will refer to the ideal $\frb$
(which is independent of the choice of $g_{i,m}$), as \emph{the
ideal of polynomials associated to} $B$.

This applies, for instance, in the following situation. Consider the
maximal ideal $\frm = (x_1,\dots,x_n)$ in the polynomial ring
$k[x_1,\dots,x_n]$. If no confusion is likely to arise, we will also
denote by $\frm$ the ideal generated by the variables in
$\*k[x_1,\dots,x_n]$, and we will write $\~\frm$ for the maximal ideal
of $\*k\[x_1,\ldots,x_n\]$. Note that the internal ideal $M=[\frm]$
is equal to the ideal $(x_1,\ldots,x_n)\cdot \*(k[x_1,\ldots,x_n])$
generated by $x_1,\ldots,x_n$ in $\*(k[x_1,\ldots,x_n])$.

For every internal ideal $A = [\fra_m] \subseteq
\*(k[x_1,\dots,x_n])$ and every $d \in \N$, we consider the internal
ideal
$$
A + M^d := [\fra_m + \frm^d] \subseteq \*(k[x_1,\dots,x_n]).
$$
Notice that $A + M^d$ is generated in degree $\le d$.
We denote by $\fra + \frm^d \subseteq
\*k[x_1,\dots,x_n]$ the ideal of polynomials associated to $A + M^d$
(note that in this notation $\fra$ alone might not be defined
as an ideal).
If $\~\fra$ and $\~\frm^d$ are the ideals in
$\*k\[x_1,\dots,x_n\]$ associated, respectively, to $A$ and to
$M^d$, then $(\fra + \frm^d) \. \*k\[x_1,\dots,x_n\] = \~\fra +
\~\frm^d$.

We now consider the behavior of codimension under the previous
construction. Suppose that $\frb_m \subseteq k[x_1,\dots,x_n]$, for
$m \in \N$, is an ideal generated in degree $\le d$, and let $\frb
\subseteq \*k[x_1,\dots,x_n]$ be the ideal of polynomials associated
to $[\frb_m]$. The codimension of the ideals $\frb_m$ can take only
finitely many values, hence there is a unique integer $e$ such that
$\codim(\frb_m)=e$ for almost all $m$.

\begin{proposition}\label{prop:transfer-of-codimension}
With the above notation, we have $\codim(\frb) = e$.
\end{proposition}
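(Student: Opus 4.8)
The plan is to prove equality by establishing the two inequalities $\codim(\frb) \le e$ and $\codim(\frb) \ge e$ separately, translating statements about codimension into statements about the vanishing locus of finitely many polynomials and then invoking the transfer principle for the ultrafilter $\cU$.

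First I would handle $\codim(\frb) \ge e$. Write $\frb_m = (g_{0,m},\dots,g_{r,m})$ with $\deg g_{i,m} \le d$, so that $\frb$ is generated by the polynomials $G_i \in \*k[x_1,\dots,x_n]$ corresponding to the internal polynomials $[g_{i,m}]$. The key point is that the property ``$\codim(\frb_m) \ge e$'' is equivalent to the vanishing of all $(n-e+1)\times(n-e+1)$ minors of the Jacobian-type or, more directly, to a statement expressible via the dimension of $V(\frb_m)$; one clean route is to use that $\codim(\frb_m)\ge e$ iff $\frb_m$ is not contained in any prime of codimension $< e$, but the cleanest is to fix generic linear forms and use that, over an infinite (here algebraically closed) field, $\codim(\frb_m) \ge e$ iff the ideal generated by $\frb_m$ together with $n-e$ general linear forms $\ell_1,\dots,\ell_{n-e}$ is still proper of the expected codimension — equivalently, iff a suitable resultant/elimination condition, a polynomial condition in the coefficients of the $g_{i,m}$ and the $\ell_j$, holds. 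Since these coefficient-level conditions transfer through $\cU$ (the coefficients of $G_i$ are precisely $[$coefficients of $g_{i,m}]$), and since they hold for almost all $m$, they hold for $\frb$; hence $\codim(\frb) \ge e$. I expect the main technical obstacle to be choosing the right finitary (quantifier-free, coefficient-level) encoding of ``$\codim \ge e$'' so that the transfer principle applies cleanly: codimension is a priori not a first-order notion, so one must reduce it to a Chevalley-style constructible condition on the bounded-degree coefficient vectors, using that the base field is algebraically closed.

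For the reverse inequality $\codim(\frb) \le e$, I would argue similarly but in the opposite direction: ``$\codim(\frb_m) \le e$'' means $V(\frb_m)$ has a component of dimension $\ge n-e$, which again can be encoded as the \emph{non}-vanishing of some polynomial condition on the coefficients, or as solvability of a system after adjoining $n-e-1$ generic hyperplanes. Actually, since for almost all $m$ we have exactly $\codim(\frb_m)=e$, both a ``$\ge e$'' condition and a ``$\le e$'' condition hold simultaneously for almost all $m$; transferring both gives $\codim(\frb)=e$ directly. Alternatively, and perhaps more transparently, I would invoke the jet-scheme / parameter-space machinery already set up before Proposition~\ref{lem:finitely-many-lct}: the ideals $\frb_m$, being generated in degree $\le d$, correspond to points $u_m$ in the parameter space $W=\A^{r^2}$ defined over $\QQ$, and the locus $\{u \mid \codim = e\}$ is a constructible subset $C \subseteq W$ defined over $\QQ$ (hence over any field). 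Then $[u_m] \in W(\*k)$ is the point corresponding to $\frb$, and since $u_m \in C(k)$ for almost all $m$, the transfer principle applied to the finitely many equalities and inequalities cutting out $C$ shows $[u_m] \in C(\*k)$, i.e. $\codim(\frb)=e$.

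Between the two approaches, I would lean on the parameter-space formulation since it is already available in the paper and makes the ``only finitely many values, cut out by finitely many polynomial conditions over $\QQ$'' structure explicit, so that transfer is immediate. The one point requiring care is verifying that $\codim$ is indeed constructible on $W$ — this follows from upper semicontinuity of fiber dimension (Chevalley) applied to the universal family $\mathcal Y \hookrightarrow W \times \A^n$ — and that the point of $W(\*k)$ attached to the internal ideal $[\frb_m]$ is exactly $[u_m]$, which is clear from the functoriality of $\phi_L$ and the component-wise definition of the ultraproduct operations. Given these, the proof is a short application of the transfer principle.
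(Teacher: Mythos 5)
Your preferred (parameter-space) approach is correct, and it is a genuinely different route from both arguments in the paper. The paper's first proof cites a theorem of Schoutens providing a first-order formula, in the coefficients of bounded-degree generators, that expresses $\codim(\fra)=e$ uniformly over every field, and then transfers that formula to $\lstar{k}$; the second proof works with regular sequences and invokes the uniform degree bounds of van den Dries and Schmidt for ideal membership and for syzygies in order to show that $g_1,\dots,g_e$ transfers to a maximal regular sequence inside $\frb$. You propose instead to derive the required quantifier-free, constructible condition on the parameter space $W$ from the universal family ${\mathcal Y}\hookrightarrow W\times\A^n$ already introduced in \S 3. This is closest in spirit to the paper's first proof, but it replaces the model-theoretic black box with standard algebraic geometry over a fixed base, and is arguably more self-contained given the setup the paper already has in place.

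Two cautionary remarks. First, the constructibility of $C_e:=\{u\in W:\dim({\mathcal Y}_u)=n-e\}$ is not a direct instance of upper semicontinuity of fiber dimension on the base $W$ --- that statement is false for non-proper morphisms (for instance the family $\fra_t=(x(tx-1),y(tx-1))$ in $\A^2$ has one-dimensional zero locus for $t\neq 0$ and zero-dimensional zero locus for $t=0$). The correct argument applies upper semicontinuity of the local fiber dimension on the \emph{source} ${\mathcal Y}$ to get a closed subset, and then Chevalley's theorem on images of constructible sets to descend to a constructible, but in general not closed, stratification of $W$. Once that is in place, membership in $C_e$ is a Boolean combination of polynomial equalities and inequalities with coefficients in $\QQ$, and the transfer is immediate. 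Second, the exploratory first paragraph of your sketch wanders: Jacobian minors detect singularity rather than codimension, and an elimination or resultant-style encoding of $\codim\geq e$ would be delicate to pin down; you correctly set this aside in favor of the parameter-space formulation, which is the one that works cleanly.
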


\begin{proof}
We give two different proofs.
Suppose first that $\fra=(f_0,\ldots,f_r)$ is an ideal in
$k[x_1,\ldots,x_n]$ generated by $r+1$ polynomials of degree $\leq
d$, where as above $r={{n+d}\choose n}-1$. It is known that there is
a first-order formula $(\verb"Codim=e")_d$ in the coefficients of
$f_0,\ldots,f_r$ such that for every field $k$ and every
$f_0,\ldots,f_r$ as above, we have $\codim(\fra)=e$ if and only if
the coefficients of the $f_i$'s satisfy this formula
 over $k$
(see Prop. 5.1 in \cite{Sch}). Recall that a first-order formula (in
the free variables ${\bf z}=z_1,\ldots,z_N$) is an expression of the
form
$$\phi=(\exists{{\bf y}_0})(\forall{{\bf y}_1})\cdots
(\exists{{\bf y}_{s-1}})(\forall{{\bf y}_s})\bigvee_{i<m_1}\bigwedge_{j<m_2}
p_{ij}({\bf z},{\bf y})=0\wedge q_{ij}({\bf z},{\bf y})\neq 0,$$ where
$p_{ij}$ and $q_{ij}$ are polynomials over $\ZZ$, and the ${\bf y}_j$
are (possibly empty) tuples of variables. One says that such a
formula is satisfied over $k$ (for given $z_1,\ldots,z_N\in k$) if
we have a true statement when the ${\bf y}$ variables are also
assumed to take values in $k$.

Suppose now that we write our ideals as
$\frb_m=(f_{m,0},\ldots,f_{m,r})\subseteq k[x_1,\ldots,x_n]$, such
that all $f_{m,i}$ have degree $\leq d$. If $G_i=[f_{m,i}]\in
\*k[x_1,\ldots,x_n]$, then we have $\frb=(G_0,\ldots,G_r)$. We see
that $\codim(G_0,\ldots,G_r)=e$ if and only if the coefficients of
the $G_i$ satisfy the formula $(\verb"Codim=e")_d$ over $\*k$.
However, this is the case if and only if for almost all $m\in\NN$,
the coefficients of the $f_{m,i}$ satisfy the formula
$({\verb"Codim=e"})_d$ over $k$. This holds since by assumption we
have $\codim(\frb_m)=e$ for almost all $m$.

For the benefit of the reader without experience in model theory, we give a second,
more transparent proof. After replacing $k$ by a suitable extension, we may assume that 
$k$ is algebraically closed. 
We use the fact that a polynomial ring over a field is Cohen-Macaulay, hence the codimension of an ideal $\fra$ in such a ring is equal to the length of any maximal regular sequence contained in 
$\fra$. Furthermore, if $\codim(\fra)=e$ and $\fra$ is generated by elements of degree $\leq d$,
then any $e$ general linear combinations of these generators with coefficients in $k$ form a regular sequence. In particular,
we get such a regular sequence whose elements
have all degree $\leq d$.

The key ingredients in this proof are two results from \cite{vdDS}. The first one 
(Theorem~1.11 in \emph{loc. cit.}) says that given $d$ and $n$, 
there is a bound
$N=N(d,n)$ such that for every field $K$, and every $g_0,g_1,\ldots,g_{\ell}
\in K[x_1,\ldots,x_n]$, with $\deg(g_i)\leq d$ for every $i$, if $g_0\in (g_1,\ldots,g_{\ell})$, then
$g_0=\sum_{i=1}^{\ell}p_ig_i$, with $\deg(p_i)\leq N$ for every $i$.
The second result (Theorem 1.4 in \emph{loc. cit.}) has a similar flavor:
given $d$ and $n$, 
there is a bound $N'=N'(d,n)$ such that for every field $K$, 
and every
$g_0,g_1,\ldots,g_{\ell}\in K[x_1,\ldots,x_n]$ with $\deg(g_i)\leq d$ for all $i$, the following submodule of
$(K[x_1,\ldots,x_n])^{\ell+1}$
$$\{(q_0,q_1,\ldots,q_{\ell})\in (K[x_1,\ldots,x_n])^{\ell+1}\mid \sum_{i=0}^{\ell}q_ig_i=0\}$$
 is generated by elements
 $(q_0,\ldots,q_{\ell})$ with $\deg(q_i)\leq N'$ for $0\leq i\leq \ell$.
In particular, if $g_0$ is a zero-divisor modulo $(g_1,\ldots,g_{\ell})$, then there are
$q_0,q_1,\ldots,q_{\ell}\in K[x_1,\ldots,x_n]$ of degree $\leq N'$ such that
$q_0g_0=\sum_{i=1}^{\ell}q_ig_i$, and $q_0\not\in (g_1,\ldots,g_{\ell})$.
 It is worth mentioning that
the proofs of these two results use in an essential way non-standard arguments.

Suppose now that we have a sequence of ideals $\frb_m$ as in the statement of the proposition.
For every $m$ in the set $I:=\{m\in\NN\mid\codim(\frb_m)=e\}$, let us choose 
$g_{m,1},\ldots,g_{m,e}\in\frb_m$ of degree $\leq d$, forming a regular sequence. 
We get $g_j=[g_{m,j}]\in\frb$ for $1\leq j\leq e$ (we can take arbitrary $g_{m,j}$
for $m\not\in I$, as these are irrelevant).
We start by showing that
$g_1,\ldots,g_e$ is a regular sequence in $\ltightstar{k}[x_1,\ldots,x_n]$. Indeed, otherwise 
we can find
$i\leq e$ and $h\in\ltightstar{k}[x_1,\ldots,x_n]\smallsetminus (g_1,\ldots,g_{i-1})$ such that
$g_ih\in (g_1,\ldots,g_{i-1})$. This implies that for almost all $m$ we have $g_{m,i}h_m\in (g_{m,1},\ldots,g_{m,i-1})$.
For almost all $m$, we know that $g_{m,1},\ldots,g_{m,e}$ forms a regular sequence, hence $h_m\in (g_{m,1},\ldots,g_{m,i-1})$. Since the degrees of $h_m$ are also bounded 
above by $\deg(h)$, it follows from the first result we quoted from \cite{vdDS} that there is $N$
such that $h_m=\sum_{j=1}^{i-1}p_{m,j}g_{m,j}$ and $\deg(p_{m,j})\leq N$ for almost all $m$. 
In this case $p_j=[p_{m,j}]\in \ltightstar{k}[x_1,\ldots,x_n]$  and 
$h=\sum_{j=1}^{i-1}p_jg_j$, a contradiction. 

In order to conclude the proof, it is enough to also show that $g_1,\ldots,g_e$ 
is a maximal regular sequence in the ideal $\frb$. For this, suppose that
there is $h\in\frb$  such that $g_1,\ldots,g_e,h$ is a regular sequence. 
For almost all $m$, we have $\codim(\frb_m)=e$, hence $h_m$ is a zero-divisor
modulo $(g_{m,1},\ldots,g_{m,e})$. By the second result we quoted from \cite{vdDS}
we deduce that we can find $N'$ and polynomials $q_{m,0},\ldots,q_{m,e}\in k[x_1,\ldots,x_n]$ 
of degree $\leq N'$
such that $q_{m,0}\not\in (g_{m,1},\ldots,g_{m,e})$ and 
$q_{m,0}h_m=\sum_{i=1}^eq_{m,i}g_{m,i}$ for almost all $m$. If we put $q_i=[q_{m,i}]$,
then $q_0h=\sum_{i=1}^eq_ig_i$, and our assumption on $h$ implies 
$q_0\in (g_1,\ldots,g_e)$. This contradicts the fact that $q_{m,0}\not\in (g_{m,1},\ldots,g_{m,e})$
for almost all $m$,
and completes our second proof.
\end{proof}

We now turn to log canonical thresholds. From now on $k$ is a fixed
algebraically closed field of characteristic zero. The function
$$
\lct_0\colon\{\fra\subseteq k[x_1,\ldots,x_n]\mid
\text{$\fra$ ideal, $\fra\subseteq (x_1,\ldots,x_n)$} \}\to\R
$$
extends to a function $\*\lct_0$, defined for internal ideals contained
in $(x_1,\ldots,x_n)\cdot\*(k[x_1,\ldots,x_n])$ and taking values in
$\lstar{\R}$. Explicitly, we have
$$
\*\lct_0([\fra_m])=[\lct_0(\fra_m)].
$$

Recall that for every bounded $u\in \lstar{\R}$ there is a unique real
number $\sh(u)$, the \emph{shadow} of $u$, characterized by
$|u-\sh(u)|<\epsilon$ for every positive real number $\epsilon$ (we
abuse the notation by writing $|u|$, instead of $\*|u|$ for
$u\in\lstar\R$). For a discussion of this notion, we refer to
\cite{Gol}, \S 5.6. A useful fact (see Theorem~6.1 in \emph{loc.
cit}.) implies that if $\{c_m\}_m$ is a sequence of real numbers
converging to $c$, then ${\rm sh}([c_m])=c$. The following is the
key result that allows us to interpret limits of log canonical
thresholds as log canonical thresholds of ideals of formal power
series.

\begin{proposition}\label{prop:lct=shadow-of-star-lct}
If $A = [\fra_m] \subseteq \*(k[x_1,\dots,x_n])$ is an internal
ideal contained in $(x_1,\ldots,x_n)$, and if $\~\fra \subseteq
\*k\[x_1,\dots,x_n\]$ is the ideal of power series associated to
$A$, then
$$
\sh\big(\*\lct_0(A)\big) = \lct(\~\fra).
$$
\end{proposition}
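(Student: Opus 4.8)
The strategy is to reduce to the bounded-degree truncations $\fra_m+\frm^d$ and then squeeze. Fix $d\in\NN$ and consider the internal ideal $A+M^d=[\fra_m+\frm^d]\subseteq\*(k[x_1,\dots,x_n])$, which is generated in degree $\le d$, and its associated polynomial ideal $\fra+\frm^d\subseteq\*k[x_1,\dots,x_n]$. As recorded in the discussion preceding this proposition, $(\fra+\frm^d)\cdot\*k\[x_1,\dots,x_n\]=\~\fra+\~\frm^d$. Since $\A^n_{\*k}$ is a regular excellent scheme of characteristic zero whose local ring at the origin has completion $\*k\[x_1,\dots,x_n\]$, Proposition~\ref{completion} gives $\lct_0(\fra+\frm^d)=\lct(\~\fra+\~\frm^d)$. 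So the core of the argument is to identify $\lct_0(\fra+\frm^d)$ with one real number $c_d$ extracted from the sequence $\lct_0(\fra_m+\frm^d)$, and then let $d\to\infty$.

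\textbf{Identifying $c_d$.} By Proposition~\ref{lem:finitely-many-lct}, the log canonical thresholds of ideals generated in degree $\le d$ form a finite set $T$ independent of the ground field; in particular the sequence $\bigl(\lct_0(\fra_m+\frm^d)\bigr)_m$ takes values in $T$, so the ultrafilter property yields $c_d\in T$ with $\lct_0(\fra_m+\frm^d)=c_d$ for almost all $m$. To transfer this to $\fra+\frm^d$, I would use the parameter space $W$ and the closed subvarieties $W_{<\lambda}\subseteq W$ constructed just before Proposition~\ref{lem:finitely-many-lct}: these are defined over $k$ (indeed over $\overline{\QQ}$), and over every field $K\supseteq k$ the points of $W_{<\lambda}(K)$ parametrize exactly the ideals of log canonical threshold $<\lambda$. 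Write the point of $W$ parametrizing $\fra+\frm^d$ as $[u_m]\in W(\*k)$, where $u_m$ parametrizes $\fra_m+\frm^d$. Since $W_{<\lambda}$ is defined over $k$, we have $[u_m]\in W_{<\lambda}(\*k)$ if and only if $u_m\in W_{<\lambda}(k)$ for almost all $m$, i.e. if and only if $\lct_0(\fra_m+\frm^d)<\lambda$ for almost all $m$, i.e. (by the choice of $c_d$) if and only if $\lambda>c_d$. Hence $\lct_0(\fra+\frm^d)<\lambda$ for every $\lambda>c_d$ and $\lct_0(\fra+\frm^d)\ge\lambda$ for every $\lambda\le c_d$, so $\lct_0(\fra+\frm^d)=c_d$ and therefore $\lct(\~\fra+\~\frm^d)=c_d$.

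\textbf{Passing to the limit.} On one side, $\lim_{d\to\infty}\lct(\~\fra+\~\frm^d)=\lct(\~\fra)$ by Proposition~\ref{limit1} applied to $\~\fra$ in the complete regular local ring $\*k\[x_1,\dots,x_n\]$ at its maximal ideal; in the degenerate case $\~\fra=0$ one simply notes $\lct(\~\frm^d)=n/d\to 0=\lct(0)$, so the statement persists. On the other side, $\fra_m\subseteq\fra_m+\frm^d$ gives $\lct_0(\fra_m)\le\lct_0(\fra_m+\frm^d)=c_d$ for almost all $m$, while Corollary~\ref{limit2} (with $e=n$ and $\mathfrak{p}=(x_1,\dots,x_n)$, so $\mathfrak{p}^{(d)}=\frm^d$) gives $\lct_0(\fra_m+\frm^d)\le\lct_0(\fra_m)+n/d$ for every $m$. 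Passing to the ultraproduct, $\*\lct_0(A)\le c_d$ and $c_d\le\*\lct_0(A)+n/d$ in $\*\RR$, hence $\sh\bigl(\*\lct_0(A)\bigr)\le c_d\le\sh\bigl(\*\lct_0(A)\bigr)+n/d$. Thus $c_d\to\sh\bigl(\*\lct_0(A)\bigr)$, and comparing the two limits yields $\sh\bigl(\*\lct_0(A)\bigr)=\lct(\~\fra)$.

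\textbf{Main obstacle.} The delicate point is the transfer step in the second paragraph: one must know that the condition ``$\lct_0<\lambda$'' on an ideal of bounded degree is cut out by an algebraic subset of the parameter space defined over the prime field, so that it behaves correctly under the ultraproduct. This is precisely the semicontinuity property of log canonical thresholds, packaged through jet schemes in the discussion preceding Proposition~\ref{lem:finitely-many-lct}. Everything else is a routine squeeze combined with the already-established behavior of log canonical thresholds under completion (Proposition~\ref{completion}) and under adding high powers of the maximal ideal (Proposition~\ref{limit1} and Corollary~\ref{limit2}).
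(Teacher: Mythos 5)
Your proof is correct, and the overall skeleton matches the paper's: truncate by $\frm^d$, use Proposition~\ref{completion} to relate the polynomial and power series thresholds, and squeeze as $d\to\infty$ via Corollary~\ref{limit2}. But the central step is handled by a genuinely different argument. The paper isolates the bounded-degree statement as Lemma~\ref{lem:lct=star-lct:bounded-case}, and proves it in two stages: first it uses the locally closed decomposition $W = \sqcup W_{\lambda_i}$ to see that $\*\lct_0(B)$ is some fixed real $\lambda$, and then it deduces $\lct_0(\frb) = \lambda$ by passing through the jet-scheme ideals $\overline{\frb^{\langle i\rangle}}$ and invoking Proposition~\ref{prop:transfer-of-codimension} (transfer of codimension under ultraproducts), which in turn leans on the van den Dries--Schmidt degree bounds or the model-theoretic formula for codimension. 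You bypass that machinery entirely: you apply the \L{}o\'s transfer principle directly to the closed subvariety $W_{<\lambda}$ of the parameter space, which works because $W_{<\lambda}$ is defined over $k$ (indeed over $\overline{\QQ}$) and its $K$-points cut out exactly the ideals of $\lct_0<\lambda$ for \emph{every} extension $K$, as is established in \S3. This is cleaner --- it replaces the codimension-transfer detour with one line of model theory --- though it puts slightly more weight on the exact field-independence assertion about $W_{<\lambda}$. Your handling of the degenerate case $\~\fra = 0$ (not explicitly addressed in the paper, but harmless) and your two-sided squeeze $\sh(\*\lct_0(A)) \le c_d \le \sh(\*\lct_0(A)) + n/d$ versus the paper's single estimate $|\lct(\~\fra)-\*\lct_0(A)| \le 2n/d$ are cosmetic variants. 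One small precision: ``the point of $W$ parametrizing $\fra+\frm^d$'' should be ``a point,'' since $\phi_L$ is not injective; choosing one compatible lift $(u_m)_m$ suffices.
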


We first prove the following lemma.

\begin{lemma}\label{lem:lct=star-lct:bounded-case}
If $B = [\frb_m]\subseteq \*(k[x_1,\ldots,x_n])$ is an internal
ideal contained in $(x_1,\ldots,x_n)\cdot\*k[x_1,\ldots,x_n]$ and
generated in degree $\le d$, and if $\frb \subseteq \*k[x_1, \dots,
x_n]$ is the ideal of polynomials associated to $B$, then
$$
\*\lct_0(B) = \lct_0(\frb)
$$
$($where $\R$ is considered as a subset of $\*\R$ in the usual
way$)$.
\end{lemma}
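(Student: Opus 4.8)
The plan is to reduce everything to a statement about a finite-dimensional parameter space of ideals, where the log canonical threshold is constant along the fibers of a constructible stratification defined over the base field, and then invoke the Łoś-type argument that was already used (in the guise of first-order formulas) in the proof of Proposition~\ref{prop:transfer-of-codimension}. First I would fix $d$ with $\deg \frb_m \le d$ for almost all $m$, and as in the discussion preceding Proposition~\ref{lem:finitely-many-lct} write each $\frb_m$ as generated by $r+1$ linear combinations of the monomials of positive degree $\le d$, so $\frb_m$ corresponds to a point $u_m$ of the parameter space $W = \mathbf{A}^{r^2}$ (defined over $\mathbf{Q}$). The internal ideal $B=[\frb_m]$ then corresponds to the point $[u_m]\in W(\ltightstar k)$, and, unwinding the definition of the associated ideal of polynomials, $\frb$ is exactly $\phi_{\ltightstar k}([u_m])$ — that is, the ideal parametrized by $[u_m]$ over the field $\ltightstar k$.

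Next I would recall from that same discussion that for each $\lambda$ there is a closed subscheme $W_{<\lambda}\subseteq W$, defined over $k$ (in fact over $\overline{\mathbf Q}$), such that for every extension field $L/k$ one has $W_{<\lambda}(L) = \{u\in W(L)\mid \lct_0(\phi_L(u))<\lambda\}$; moreover only finitely many distinct sets $W_{<\lambda}$ occur, so the possible values of $\lct_0$ along this family form a finite set $\lambda_1<\dots<\lambda_N$. Membership of a point $u$ in the constructible locus "$\lct_0(\phi_L(u)) = \lambda_i$" — namely $W_{<\lambda_{i+1}}\setminus W_{<\lambda_i}$ — is cut out by polynomial equalities and inequations with coefficients in $\overline{\mathbf Q}\subseteq k$. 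This is precisely a condition of the kind to which the transfer principle for ultraproducts applies: $[u_m]$ lies in the $\lambda_i$-stratum over $\ltightstar k$ if and only if $u_m$ lies in the $\lambda_i$-stratum over $k$ for almost all $m$, i.e. if and only if $\lct_0(\frb_m)=\lambda_i$ for almost all $m$. Since exactly one $i$ occurs for almost all $m$ (the values $\lct_0(\frb_m)$ lie in the finite set $\{\lambda_1,\dots,\lambda_N\}$, so by the ultrafilter property one value is attained on a set in $\cU$), we conclude $\lct_0(\frb)=\lct_0(\phi_{\ltightstar k}([u_m]))=\lambda_i$, while by definition $\ltightstar\lct_0(B)=[\lct_0(\frb_m)]=[\lambda_i]=\lambda_i$ as an element of $\ltightstar\R$ sitting inside $\R$. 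This gives the asserted equality.

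The main obstacle, and the only real content beyond bookkeeping, is justifying that "$\lct_0(\phi_L(u)) = \lambda_i$" is a constructible condition on $u$ that is both defined over the base field and respected by the ultraproduct/transfer machinery. All of this is in fact already packaged in the excerpt: the existence and field-independence of the $W_{<\lambda}$ come from Proposition~\ref{lem:finitely-many-lct} and the jet-scheme semicontinuity discussion preceding it, and the transfer step is the same mechanism (first-order formulas satisfied over almost all $m$ if and only if satisfied over $\ltightstar k$) spelled out in the first proof of Proposition~\ref{prop:transfer-of-codimension}. So the write-up is largely a matter of assembling these pieces; the one point deserving care is making sure the parametrization identifying $\frb$ with $\phi_{\ltightstar k}([u_m])$ is literally the one used to define the associated ideal of polynomials, which is immediate from the construction of $\frb$ via the generators $G_i=[g_{i,m}]$.
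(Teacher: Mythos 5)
Your proposal is correct, and it takes a genuinely different route from the paper's own argument. The paper splits the lemma into two halves: it uses the constructible stratification $\{W_{\lambda_i}\}$ of the parameter space only to identify $\ltightstar{\lct}_0(B)$ with a specific value $\lambda$, and then proves $\lct_0(\frb)=\lambda$ separately by passing to jet schemes — it forms the ideals $\overline{\frb_m^{\langle i\rangle}}$ and $\overline{\frb^{\langle i\rangle}}$, invokes Proposition~\ref{prop:transfer-of-codimension} to transfer their codimensions, and then characterizes $\lct_0=\lambda$ via finitely many codimension inequalities for jet schemes of level $\leq N$. Your proof bypasses the jet scheme step entirely: once you know $\frb=\phi_{\ltightstar{k}}([u_m])$ and that the stratum $W_{\lambda_i}$ is a constructible subscheme of $W=\A^{r^2}$ defined over $\overline{\QQ}\subseteq k$ whose $K$-points compute $\lct_0$ for \emph{every} algebraically closed extension $K$ of $k$ (in particular for $K=\ltightstar{k}$), membership in $W_{\lambda_i}$ is a quantifier-free Boolean combination of polynomial conditions with coefficients in $k$, and such conditions transfer through the ultraproduct by direct component-wise computation. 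This is both more elementary (no quantifiers at all, whereas the paper's first proof of Proposition~\ref{prop:transfer-of-codimension} imports Schoutens's first-order codimension formula, and the second proof imports bounds of van~den~Dries--Schmidt) and more efficient (a single transfer step establishes both sides of the equality). What the paper's route buys in exchange is that the jet-scheme codimension computation is made entirely explicit, so the reliance on the field-independence of the stratification is only invoked where strictly necessary; but your use of it is legitimate, since the paper already proves in \S 3 that $W_{<\lambda}$ computes $\lct_0$ over every algebraically closed extension of $k$, and $\ltightstar{k}$ is one such. The only point to state explicitly in a full write-up is the one you already flag: that the ideal of polynomials associated to $B$ coincides with $\phi_{\ltightstar{k}}([u_m])$, which follows directly from the construction of $\frb$ as generated by the bounded-degree internal polynomials $G_i=[g_{i,m}]$.
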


\begin{proof}
We can assume that each $\frb_m$ is contained in $\frm$ and is
generated in degree $\le d$. We have seen in the previous section
that there is an affine space $W={\mathbb A}^{r^2}$, where $r =
\binom{d+n}{n} - 1$, such that for every  field extension
$k\subseteq L$, with $L$ algebraically closed, we have a functorial
surjective map
$$
\phi_{L}\colon W(L)\to\{\frb\subseteq (x_1,\ldots,x_n)
L[x_1,\ldots,x_n]\mid \text{$\frb$ generated in degree $\le d$} \}.
$$
Moreover, we have seen that $W$ can be written as the disjoint union
of finitely many locally closed subsets $W_{\lambda_i}$ (with
$\lambda_i\in\RR$) defined over $k$, such that for every $L$
$$
W_{\lambda_i}(L)=\{u\in W(L)\mid \lct_0(\phi_L(u))=\lambda_i\}.
$$
Therefore there is $\lambda$ such that $\frb_m\in W_{\lambda}$ for
almost all $m$, hence $\*\lct_0(B)=\lambda$.

If $L$ is as above, and if $\fra\subseteq
(x_1,\ldots,x_n)L[x_1,\ldots,x_n]$, let us denote by $\fra^{\langle
i\rangle}$ the ideal defining the $i^{\rm th}$ jet scheme of the
subscheme of $\AAA_L^n$ defined by $\fra$. This is an ideal in the
polynomial ring $R_i:=L[x_j^{(\ell)}|1 \le j\leq n, 0\leq \ell\leq i]$
(where we identify $x_j^{(0)}$ with $x_j$). If $\fra$ is generated
by $\{f_{\alpha}\}_{\alpha}$, then $\fra^{\langle i\rangle}$ is
generated by
$\{f_{\alpha},f_{\alpha}',\ldots,f_{\alpha}^{(i)}\}_{\alpha}$, where
$f^{(q)}=D^q(f)$, with $D$ being a derivation of $R_i$ taking
$x_j^{(\ell)}$ to $x_j^{(\ell+1)}$. We refer to \cite{EM}, \S 3 for
this description by equations of the jet schemes. We denote by
$\overline{\fra^{\langle i\rangle}}\subseteq R_i/(x_1,\ldots,x_n)$
the ideal defining the fiber over $0$ in the above jet scheme.

It follows from the above description that in our
setup the ideal
$$\overline{\frb^{\langle
i\rangle}}\subseteq\*k[x_j^{(1)},\ldots,x_j^{(i)}|1\leq j\leq n]$$
is the ideal of polynomials associated to the internal ideal
$[\overline{\frb_m^{\langle i\rangle}}]$. Note that this is an
internal ideal generated in degree $\leq d$. In particular, it
follows from Proposition~\ref{prop:transfer-of-codimension} that
$\codim(\overline{\frb^{\langle i\rangle}})=e_i$, where $e_i$ is the
unique integer such that $\codim(\overline{\frb_m^{\langle
i\rangle}})=e_i$ for almost all $m$.

On the other hand, we have seen in \S 3 that there is
a positive integer $N$ such that
for every algebraically closed field extension $L$ of $k$, and every ideal
$\fra\subseteq (x_1,\ldots,x_n)L[x_1,\ldots,x_n]$ that is generated in
degree $\le d$, we have
$\lct_0(\fra)=\lambda$ if and only if
$\codim(\overline{\fra^{\langle i\rangle}})$ satisfies suitable
inequalities for every $i\leq N$.
Indeed, note that we have only finitely many possible log canonical thresholds for these ideals, hence 
given such a log canonical threshold $\lambda$, we can find $\lambda'>\lambda$
such that there is no possible log canonical threshold in $(\lambda,\lambda')$. 
We can then choose $N$ such that
the condition $\lct_0(\fra) \ge \lambda$ is equivalent 
to $\codim(\overline{\fra^{\langle i\rangle}})\geq (n-\lambda)(i+1)$ for every $i \le N$,
whereas the condition $\lct_0(\fra) \le \lambda$ is equivalent to
$\codim(\overline{\fra^{\langle i\rangle}})\leq (n-\lambda')(i+1)$ for some $i \le N$.

It follows that if these inequalities are satisfied by
$\codim(\overline{\frb_m^{\langle i\rangle}})$ for almost all $m$, then they are
satisfied also by $\codim(\overline{\frb^{\langle i\rangle}})$.
We conclude that $\lct_0(\frb)=\lambda$, as required.
\end{proof}

\begin{proof}[Proof of Proposition~\ref{prop:lct=shadow-of-star-lct}]
For every fixed $d\geq 1$, we consider the internal ideal $A + M^d
= [\fra_m + \frm^d]$. Note that
this internal ideal is generated in degree $\le d$. Let
$$
\fra + \frm^d \subseteq \*k[x_1,\dots,x_n] \and \~\fra + \~\frm^d
\subseteq \*k\[x_1,\dots,x_n\]
$$
denote, respectively, the ideal of polynomials and the ideal of
power series that are associated to $A + M^d$. Since $\~\fra +
\~\frm^d = (\fra + \frm^d)\. \*k\[x_1,\dots,x_n\]$, we have
$$
\lct_0(\fra + \frm^d) = \lct(\~\fra + \~\frm^d)
$$
by Proposition~\ref{completion}. Applying Corollary~\ref{limit2}, we
get
$$
\big|\lct(\~\fra) - \lct(\~\fra + \~\frm^d)\big| \le n/d
$$
for every $d\geq 1$, and similarly
$$
\big|\*\lct_0(A) - \*\lct_0(A + M^d)\big| = \left[|\lct_0(\fra_m) -
\lct_0(\fra_m + \frm^d)|\right] \le n/d.
$$
On the other hand, $\*\lct_0(A + M^d) = \lct_0(\fra + \frm^d)$ for
any $d$ by Lemma~\ref{lem:lct=star-lct:bounded-case}. Therefore
$$
\big|\lct(\~\fra) - \*\lct_0(A)\big| \le 2n/d.
$$
As this holds for every $d$, we get the assertion in the
proposition.
\end{proof}

\begin{remark}
One can carry a construction analogous to the one considered in this section when
starting with a sequence
of power series $h_m\in k\llbracket x_1,\ldots,x_n \rrbracket$
rather than a sequence of polynomials.
This produces an {\it internal power series}
$H=[h_m]\in \ltightstar{(k\llbracket x_1,\ldots,x_n \rrbracket)}$,
and thus, after truncation of the unbounded degree terms,
a power series $\~h\in \ltightstar{k}\llbracket
x_1,\ldots,x_n \rrbracket$. Similarly, starting with a sequence of ideals
$\fra_m\subseteq k\llbracket x_1,\ldots,x_n\rrbracket$, we get an ideal 
$A$ in $\ltightstar(k\llbracket x_1,\ldots,x_n\rrbracket)$, and after truncation,
an ideal $\widetilde{\fra}\subseteq \ltightstar{k}\llbracket x_1,\ldots,x_n\rrbracket$.
All previous results have analogues in this setting, the proofs being the same. For example,
we again have ${\rm sh}(\ltightstar{\lct}_0(A))=\lct(\widetilde{\fra})$. 
\end{remark}

We are now ready to prove our main results stated in the
Introduction.

\begin{proof}[Proof of Theorem~\ref{thm_main1}]
Consider a sequence $\{c_m\}_m$ with $c_m\in\cT_n$ for all $m$, and
with $\lim_{m\to\infty}c_m=c$. Fix an algebraically closed field $k$
of characteristic zero. By
Proposition~\ref{prop:all-the-same-subset} we can find ideals
$\fra_m\subseteq (x_1,\ldots,x_n)k[x_1,\ldots,x_n]$ such that
$c_m=\lct_0(\fra_m)$. Let $A=[\fra_m]$, and let $\~\fra\subseteq
\*k\[x_1,\ldots,x_n\]$ be the ideal of formal power series
associated to $A$. It follows from
Proposition~\ref{prop:lct=shadow-of-star-lct} that $${\rm
sh}([c_m])={\rm sh}(\*\lct_0(A))=\lct(\~\fra).$$ On the other hand,
since $\{c_m\}_m$ converges to $c$, we have ${\rm sh}([c_m])=c$. We
conclude that $c\in\cTser_n$, and therefore $c\in\cT_n$, by
Proposition~\ref{prop:all-the-same-subset}.
\end{proof}

\begin{proof}[Proof of Theorem~\ref{thm_main2}]
The fact that every element of $\cT_{n-1}$ is a point of
accumulation from above for $\cT_n$ is well-known. It follows from
the fact that if $c=\lct(\fra)$, for some ideal sheaf $\fra$ on
a nonsingular $(n-1)$-dimensional
variety $X$, then $c+\frac{1}{m}=\lct(\fra+(t^m))$,
where $\fra+(t^m)$ is an ideal on $X\times{\mathbb A}^1$, $t$
denoting the coordinate on ${\mathbb A}^1$
(see, for example, Proposition~1.20 in \cite{ELSV}).

Suppose now that we have a strictly decreasing sequence $\{c_m\}_{m
\in \N}$ in $\cT_n$, and let $c= \lim_{m \to \infty} c_m$.
Fix an algebraically closed field $k$ of
characteristic zero. By Proposition~\ref{prop:all-the-same-subset}
we can find ideals $\fra_m\subseteq
(x_1,\ldots,x_m)k[x_1,\ldots,x_n]$ such that $c_m=\lct_0(\fra_m)$.
Let $A=[\fra_m]$, and let $\~\fra\subseteq \*k\[x_1,\ldots,x_n\]$ be
the ideal of formal power series associated to $A$. As in the proof
of Theorem~\ref{thm_main1}, we deduce that $c=\lct(\~\fra)$.

As usual, we put $\frm=(x_1,\ldots,x_n)\.k[x_1,\ldots,x_n]$
and $\~\frm = (x_1,\ldots,x_n)\.\*k\[x_1,\ldots,x_n\]$.
In order to complete the proof, it is enough to show that
\begin{equation}\label{enough}
\lct\left(\~\fra+\~\frm^d\right)>\lct(\~\fra)
\end{equation}
 for every
$d\geq 1$. Indeed, this implies by Proposition~\ref{limit1} that the
locus $\LCT(\~\fra)$ is positive dimensional, hence $c\in\cT_{n-1}$
by Proposition~\ref{reduction_to_zero-dimensional}.

 Fix now $d\geq 1$,  and consider the ideals $\fra_m + \frm^d
\subseteq k[x_1,\ldots,x_n]$. Note that the internal ideal
$[\fra_m+\frm^d]$ is equal to $A+M^d$, where
$M=(x_1,\ldots,x_n)\cdot\*(k[x_1,\ldots,x_n])$. Therefore the ideal
of formal power series associated to $[\fra_m+\frm^d]$ is $\~\fra+
\~\frm^d$. Since the internal ideal
$[\fra_m+\frm^d]$ is generated in degree $\leq d$, it follows from
Proposition~\ref{lem:finitely-many-lct} that there is
$\lambda_d\in\RR$ such that $\lct_0(\fra_m+\frm^d)=\lambda_d$ for
almost all $m$, and hence we obtain
$\lct\left(\~\fra+\~\frm^d\right)=\lambda_d$
by Proposition~\ref{prop:lct=shadow-of-star-lct}.

On the other hand, for every $m$ we have
$\lct_0(\fra_m+\frm^d)\geq\lct_0(\fra_m)=c_m>c$, hence
$\lambda_d>c$. This shows (\ref{enough}) and completes the proof of
the theorem.
\end{proof}

\begin{remark}\label{hypersurface_case}
Since $\cHT_n=\cT_n\cap [0,1]$ we immediately obtain variants of
Theorems~\ref{thm_main1} and \ref{thm_main2} for log canonical
thresholds of hypersurfaces: each set $\cHT_n$ is closed, and the
set of accumulation points from above of $\cHT_n$ is
$\cHT_{n-1}\smallsetminus\{1\}$.
\end{remark}

\section{Comments on the ACC Conjecture}

By Proposition~\ref{easy_properties}, we have
$\cHT_n\subseteq\cT_n\subseteq n\cdot\cHT_n$. This implies that
Conjecture~\ref{conj_shokurov} holds for $n$ if and only if $\cHT_n$
has no points of accumulation from below. We now show that this
holds for every $n$ if and only if for every $n$ there is no
strictly increasing sequence in $\cHT_n$ converging to $1$.

\begin{proof}[Proof of Corollary~\ref{cor_thm_main1}]
Fix  an algebraically closed field $k$ of characteristic zero.
Suppose that we have a strictly increasing sequence $\{c_m\}_m$ in
$\cHT_n(k)$. By Corollary~\ref{cor:all-the-same-subset}, we may
write $c_m=\lct_0(f_m)$ for some $f_m\in k[x_1,\ldots,x_n]$ with
$f_m(0)=0$. Theorem~\ref{thm_main1} (see also
Remark~\ref{hypersurface_case}) gives
$c:=\lim_{m\to\infty}c_m\in\cHT_n(k)$. In particular $c\in\Q$, hence
we may write $c=\frac{a}{b}$ for positive integers $a$ and $b$. Note
that $a\leq b$, and let $N=n+b-a$ and
$$
g_m=f_m+\sum_{i=1}^{b-a}y_i^b\in
k[x_1,\ldots,x_n,y_1,\ldots,y_{b-a}].
$$
A special case of the Thom-Sebastiani Theorem (see Proposition~8.21
in \cite{kollar}) implies that
$$\lct_0(g_m)=\min\left\{1,\lct_0(f_m)+
\sum_{i=1}^{b-a}\frac{1}{b}\right\}=\lct_0(f_m)+\frac{b-a}{b},$$
hence $\{\lct(g_m)\}_m$ is a strictly increasing sequence in
$\cHT_N(k)$ converging to $1$.
\end{proof}

\bigskip

We now turn to the proof of Proposition~\ref{prop:intro} from the
Introduction. In fact, we will give a stronger statement in
Proposition~\ref{equivalence} below, that also interprets the ACC
Conjecture as a semicontinuity property of log canonical thresholds
of formal power series.

From now on we fix an uncountable algebraically closed field $k$ of
characteristic zero. We consider the set $R=k\[x_1,\ldots,x_n\]$ as
the set of $k$-valued points of an infinite-dimensional affine space
over $k$, parametrizing the coefficients of the power series. As
such, it carries a natural Zariski topology.

This can be described, more precisely, as follows. If $\frm$ denotes
the maximal ideal of $R$, then
$R\simeq\projlim_{\ell}R/\frm^{\ell}$. Each $R/\frm^{\ell}$
parametrizes polynomials over $k$ of degree $<\ell$, and as such it
consists of the $k$-points of an affine space of dimension
${{n+\ell-1}\choose {n}}$. The Zariski topology on $R$ is the
projective limit of the Zariski topologies on each of the
$R/\frm^{\ell}$. Note that $\frm\subset R$ is a closed subset.

We denote by $\psi_{\ell}\colon R\to R/\frm^{\ell}$ the natural
projection maps. A subset of $R$ is a \emph{cylinder} if it is of
the form $\psi_{\ell}^{-1}(S)$ for some $\ell$ and some subset $S$
of $R/\frm^{\ell}$ (note that we do not put any condition on $S$). A
cylinder $C$ is \emph{constructible}, \emph{open} or \emph{closed}
if $S$ is constructible, open or closed, respectively. Since the
projections $R/\frm^{\ell+1} \to R/\frm^{\ell}$ are continuous and
open (being flat), it follows that these notions are well-defined.
These are variants of the corresponding notions when instead of $R$
one considers
the space of arcs of a smooth variety (see for example \cite{ELM}).

\begin{remark}\label{rem_batyrev}
A variant of an argument due to Batyrev \cite{batyrev} in the case
of spaces of arcs (see also Lemma~1.2 in \cite{ELM}) implies that if
$C_1\supseteq C_2\supseteq\cdots $ is a sequence of constructible
cylinders with $\cap_mC_m=\emptyset$, then $C_m=\emptyset$ for some
$m$. This is the key point where we use the fact that $k$ is
uncountable.
\end{remark}

For every $n \ge 1$ and $c \in \R$, we consider the set
$$
{\mathcal R}_n(c):=\{f\in \frm\mid\lct(f)\geq c\}.
$$

\begin{proposition}\label{equivalence}
For every $n\geq 1$ and $c \in \R$, the following assertions are equivalent:
\begin{enumerate}
\item[i)] $c$ is not a point of accumulation from below of $\cHT_n$.
\item[ii)] ${\mathcal R}_n(c)$ is a cylinder in $R$.
\item[iii)] ${\mathcal R}_n(c)$ is open in $\frm$.
\end{enumerate}
\end{proposition}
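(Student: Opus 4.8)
The plan is to prove the cycle of implications (i) $\Rightarrow$ (ii) $\Rightarrow$ (iii) $\Rightarrow$ (i). For the implication (i) $\Rightarrow$ (ii), I would argue as follows. Since the possible log canonical thresholds $\lct(f)$ for $f\in\frm$ lie in the discrete-in-$[0,1]$ set $\cHT_n$, and $c$ is not an accumulation point from below, there is a real number $c'<c$ such that $\cHT_n\cap(c',c)=\emptyset$; hence $\lct(f)\geq c$ is equivalent to $\lct(f)>c'$. Now use Proposition~\ref{limit1}: for each $d$, the ideal $(f)+\frm^d$ is generated by polynomials of degree $<d$, so $\lct_0$ on the truncations $\psi_d(f)$ makes sense and agrees with $\lct((f)+\frm^d)$ by Proposition~\ref{completion}, and $\lct((f)+\frm^d)\searrow\lct(f)$. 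By Corollary~\ref{limit2} the error is at most $n/d$, so choosing $d$ with $n/d<c-c'$ we get that $\lct(f)\geq c$ if and only if $\lct((f)+\frm^d)>c'$, a condition depending only on $\psi_d(f)$. Thus ${\mathcal R}_n(c)=\psi_d^{-1}(S)$ for the appropriate $S\subseteq R/\frm^d$, i.e.\ a cylinder. (In fact $S$ is constructible, being a finite union of locus-of-fixed-lct sets as in \S 3, but we only need that it is a cylinder.)

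For (ii) $\Rightarrow$ (iii), suppose ${\mathcal R}_n(c)=\psi_\ell^{-1}(S)$ is a cylinder. I would show it is open in $\frm$ by showing its complement $\frm\smallsetminus{\mathcal R}_n(c)=\{f\in\frm\mid\lct(f)<c\}$ is closed in $\frm$. Write this complement as $\frm\cap\psi_\ell^{-1}(R/\frm^\ell\smallsetminus S)$. The key input is semicontinuity: for any $\ell$, the set $\{g\in R/\frm^\ell\mid \lct_0(g)<c\}$ is a constructible (indeed locally closed-unioned) subset of the affine space $R/\frm^\ell$, by the results recalled in \S 3 (Corollary~9.5.39 in \cite{positivity} or Theorem~4.9 in \cite{Mus}); combining this with the cylinder description and the decreasing-limit behavior $\lct((f)+\frm^j)\searrow \lct(f)$ we see $\frm\smallsetminus {\mathcal R}_n(c)$ is a countable increasing union $\bigcup_{j\geq\ell}\psi_j^{-1}(Z_j)$ of closed cylinders that, by hypothesis, stabilizes (it equals the single closed cylinder $\psi_\ell^{-1}(R/\frm^\ell\smallsetminus S)$). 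Intersecting with the closed set $\frm$, we get that the complement of ${\mathcal R}_n(c)$ in $\frm$ is closed, so ${\mathcal R}_n(c)$ is open in $\frm$.

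For (iii) $\Rightarrow$ (i), I argue by contraposition: if $c$ is a point of accumulation from below of $\cHT_n$, I produce a point $f\in{\mathcal R}_n(c)$ that is not interior. Pick $c_m\in\cHT_n$ strictly increasing to $c$; by Corollary~\ref{cor:all-the-same-subset} write $c_m=\lct_0(h_m)$ for $h_m\in k[x_1,\ldots,x_n]$ with $h_m(0)=0$. Using the ultrafilter construction of \S 4 (and the remark there on sequences of power series, or equivalently Proposition~\ref{prop:lct=shadow-of-star-lct}) one gets, after possibly enlarging $k$, a single power series $f$ with $\lct(f)=\lim c_m=c$, so $f\in{\mathcal R}_n(c)$. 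The point is that in every Zariski neighborhood of $f$ — i.e.\ in every cylinder containing $f$, which is determined by finitely many coefficients, hence by some truncation $\psi_d(f)$ — there sit approximations whose lct is strictly less than $c$: concretely, for each $d$ the truncation-based approximations of the $h_m$ that agree with $f$ to order $d$ include some with lct $=c_m<c$ (because, as in the proof of Theorem~\ref{thm_main2}, $\lct_0((h_m)+\frm^d)\geq \lct_0(h_m)=c_m$ but one must choose the approximating family so that lct actually drops below $c$; this is where one uses that the $c_m$ are \emph{strictly} below $c$). Hence no neighborhood of $f$ is contained in ${\mathcal R}_n(c)$, so ${\mathcal R}_n(c)$ is not open in $\frm$.

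The main obstacle I expect is the implication (iii) $\Rightarrow$ (i): one has to go from the \emph{topological} failure of openness at a single power series back to the combinatorial statement that $c$ is approached from below by actual hypersurface lct's in a \emph{fixed} dimension $n$. The delicate point is producing, inside an arbitrarily deep cylinder neighborhood of the witness $f$, a polynomial (not just a power series limit) with lct strictly less than $c$ while keeping the number of variables equal to $n$; the tools are the ultrafilter transfer of \S 4 together with Remark~\ref{rem_batyrev} (Batyrev's argument, using uncountability of $k$) to control the nested cylinders. The other implications are comparatively soft, resting on Corollary~\ref{limit2} and the semicontinuity of $\lct$ in families.
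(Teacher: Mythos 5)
Your cycle is (i) $\Rightarrow$ (ii) $\Rightarrow$ (iii) $\Rightarrow$ (i); the paper instead proves (i) $\Leftrightarrow$ (ii) and (ii) $\Leftrightarrow$ (iii) separately. Your (i) $\Rightarrow$ (ii) matches the paper's exactly. Your (ii) $\Rightarrow$ (iii) reaches the right conclusion, but the paper's version is cleaner: since being a cylinder at level $\ell$ means membership in ${\mathcal R}_n(c)$ depends only on $\psi_\ell(f)$, and for a degree $\leq\ell-1$ polynomial $g$ one has $\lct(g)=\lct_0(g)$ by Proposition~\ref{completion}, the set $S$ is literally identified with $\{g\in\frm/\frm^\ell:\lct_0(\iota_{\ell-1}(g))\geq c\}$, which is open by the semicontinuity theorem. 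Your extra layer of ``countable increasing union of closed cylinders that stabilizes'' adds nothing and, as written, asserts without justification that $\psi_\ell^{-1}(R/\frm^\ell\smallsetminus S)$ is a \emph{closed} cylinder, which is precisely what needs proof.

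The genuine gap is in your (iii) $\Rightarrow$ (i). You argue by contrapositive: given a strictly increasing sequence $c_m\to c$ in $\cHT_n$ realized by $h_m$, build the ultrafilter limit $\~f$ with $\lct(\~f)=c$, and then claim $\~f$ is not an interior point of ${\mathcal R}_n(c)$. But nothing in the ultrafilter construction forces $\~f$ to be non-interior. The truncations of $\~f$ live over $\ltightstar{k}$, not over $k$, so there is no literal sense in which the $h_m$ ``sit in every Zariski neighborhood of $\~f$.'' In fact, as in the proof of Theorem~\ref{thm_main2}, for each $d$ one has $\lct_0(\psi_d(h_m))\geq c_m>c$, so the natural deformations of $\~f$ obtained from the $h_m$ all have $\lct\geq c$ --- they give no witness with $\lct<c$. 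You acknowledge the difficulty but do not resolve it, and you invoke Remark~\ref{rem_batyrev} without saying how it enters.

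The paper sidesteps this entirely. For $\lnot$(i) $\Rightarrow$ $\lnot$(ii): using Corollary~\ref{limit2} and semicontinuity, each ${\mathcal R}_n(c_m)$ is a countable \emph{decreasing} intersection of constructible (indeed open) cylinders $C^m_\ell$, and since $\sup_m c_m=c$ one has ${\mathcal R}_n(c)=\bigcap_{m,\ell}C^m_\ell$. If ${\mathcal R}_n(c)$ were a cylinder, Remark~\ref{rem_batyrev} forces this countable intersection to stabilize after finitely many terms, giving ${\mathcal R}_n(c)={\mathcal R}_n(c_m)$ for $m\gg0$; but the witness $h_m$ lies in ${\mathcal R}_n(c_m)\smallsetminus{\mathcal R}_n(c)$, a contradiction. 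For (iii) $\Rightarrow$ (ii), the paper again uses Remark~\ref{rem_batyrev}: write the closed set $\frm\smallsetminus{\mathcal R}_n(c)$ as $\bigcap_\ell\psi_\ell^{-1}(Z_\ell)$ with $Z_\ell$ closed and decreasing, write ${\mathcal R}_n(c)=\bigcap_\ell C_\ell$ with $C_\ell$ constructible decreasing, and observe that $C'_\ell:=C_\ell\cap\psi_\ell^{-1}(Z_\ell)$ is a decreasing chain of constructible cylinders with empty intersection, so some $C'_\ell=\emptyset$, forcing ${\mathcal R}_n(c)=\frm\smallsetminus\psi_\ell^{-1}(Z_\ell)$, a cylinder. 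In short: the paper never needs a ``boundary witness'' power series; the uncountability of $k$ is exploited purely through the cylinder-stabilization argument. Your (i) $\Rightarrow$ (ii) is fine, but the step back from openness to the ACC statement should be done via (iii) $\Rightarrow$ (ii) plus $\lnot$(i) $\Rightarrow$ $\lnot$(ii), not via a direct non-interiority argument.
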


\begin{proof}
For every $f\in R$ and every nonnegative integer $d$, we denote by
$f_{\leq d}$ the truncation of $f$ of degree $\leq d$. It is
convenient to consider also the map $\iota_d\colon R/\frm^{d+1}\to
R$ that identifies each coset $h + \frm^{d+1}$ with the corresponding polynomial
$h_{\le d}$ of degree $\leq d$.

In order to prove that i) $\Rightarrow$ ii) note that if $c$ is not
a point of accumulation from below for the set $\cHT_n$, then there
is $\epsilon>0$ such that $(c-\epsilon,c)\cap\cHT_n=\emptyset$. It
follows from Corollary~\ref{limit2} that if
$\frac{n}{d+1}<\frac{\epsilon}{2}$, then $f\in {\mathcal R}_n(c)$ if
and only if $\lct_0(f_{\leq d})>c-\frac{\epsilon}{2}$. This implies
that the condition for having $f\in {\mathcal R}_n(c)$ depends only
on $\psi_{d+1}(f)$. Therefore ${\mathcal
R}_n(c)=\psi_{d+1}^{-1}(\psi_{d+1}({\mathcal R}_n(c)))$,  hence it
is a cylinder.

Conversely, suppose now that we have a strictly increasing sequence
$c_m$ in $\cHT_n$ converging to $c$, and that ${\mathcal R}_n(c)$ is
a cylinder. We note that in general, for every $c' > 0$, we may write
${\mathcal R}_n(c')$ as a countable intersection of constructible
cylinders. Indeed, it follows from Corollary~\ref{limit2} that we
can write
$$
{\mathcal R}_n(c')=\bigcap_{\ell\geq 1}\psi_{\ell}^{-1}(S_{\ell}),
$$
where $S_{\ell}:= \left\{h+ \frm^\ell\in
\frm/\frm^{\ell}\mid\lct_0(h_{\le \ell-1})\geq
c'-\frac{n}{\ell}\right\}$ is open in $\frm/\frm^{\ell}$ by the
Semicontinuity Theorem for log canonical thresholds (see
Corollary~9.5.39 in \cite{positivity}).

Therefore we write
$${\mathcal R}_n(c_m)=\bigcap_{\ell\geq 1}C^m_{\ell},$$
where each $C^m_{\ell}$ is a constructible cylinder. Note that
$\sup_mc_m=c$, hence
$$
{\mathcal R}_n(c)=\bigcap_m{\mathcal R}_n(c_m)=\bigcap_{m,\ell}C^m_{\ell}.
$$
Since ${\mathcal R}_n(c)$ is a cylinder, it follows from
Remark~\ref{rem_batyrev} that ${\mathcal R}_n(c)$ is the
intersection of finitely many $C^m_{\ell}$. In particular,
${\mathcal R}_n(c)={\mathcal R}_n(c_m)$ for $m\gg 0$. However, by
Corollary~\ref{cor:all-the-same-subset} we can find $h_m\in R$ such
that $c_m=\lct(h_m)$. Since $c_m<c$ for every $c$, we have $h_m\in
{\mathcal R}_n(c_m)\smallsetminus {\mathcal R}_n(c)$ for all $m$, a
contradiction. This completes the proof of i)~$\Leftrightarrow$~ii).

Suppose now that $c$ is fixed, and that ${\mathcal R}_n(c)$ is a
cylinder. In this case, we can write ${\mathcal R}_n(c)
=\psi_{\ell}^{-1}(S)$, and $S$ can be identified via
$\iota_{\ell-1}$ with the set of those polynomials $g$ of degree
$\leq \ell-1$ such that $g(0)=0$ and $\lct_0(g)\geq c$. The
Semicontinuity Theorem for log canonical thresholds implies that $S$
is open in $\frm/\frm^{\ell}$, hence ${\mathcal R}_n(c)$ is open in
$\frm$. This shows that ii)~$\Rightarrow$~iii).

Conversely, suppose that ${\mathcal R}_n(c)$ is open in $\frm$.
 Since $\frm\smallsetminus {\mathcal
R}_n(c)$ is closed, it follows from the definition of the Zariski
topology on $R$ that we can write
$$
\frm\smallsetminus {\mathcal R}_n(c)=\bigcap_{\ell\geq
1}\psi_{\ell}^{-1}(Z_{\ell})
$$
for suitable closed subsets $Z_{\ell}\subseteq R/\frm^{\ell}$. We
may clearly assume that
$\psi_{\ell}^{-1}(Z_{\ell})\supseteq\psi_{\ell+1}^{-1}(Z_{\ell+1})$
for every $\ell$. On the other hand, we have seen that we can write
$$
{\mathcal R}_n(c)=\bigcap_{\ell\geq 1}C_{\ell},
$$
where each $C_{\ell}$ is a constructible cylinder, and we may assume
that $C_{\ell}\supseteq C_{\ell+1}$ for every $\ell$. We deduce that
if we put $C'_{\ell}=C_{\ell}\cap\psi_{\ell}^{-1}(Z_{\ell})$, then
$C'_{\ell}\supseteq C'_{\ell+1}$ for every $\ell$, and
$\bigcap_{\ell}C'_{\ell}=\emptyset$. It follows from
Remark~\ref{rem_batyrev} that there is $\ell$ such that
$C'_{\ell}=\emptyset$. In this case we have $C_{\ell}={\mathcal
R}_n(c)=\frm\smallsetminus\psi_{\ell}^{-1}(Z_{\ell})$, hence
${\mathcal R}_n(c)$ is a cylinder. This completes the proof of the
proposition.
\end{proof}

\begin{remark}
Suppose that Conjecture~\ref{conj_shokurov} holds for $n$, and let
$c>0$. It follows from the above proposition that if $k$ is an
uncountable algebraically closed field of characteristic zero, then
we can find $N(n,c)$ such that the condition for $f\in
k\[x_1,\ldots,x_n\]$ with $f(0)=0$ to satisfy $\lct(f)\geq c$
depends only on the truncation of $f$ up to level $N(n,c)$. In fact,
this integer $N(n,c)$ is independent on $k$: we have seen in the
proof of the proposition that it depends only on the largest element
in $\cHT_n$ that is $<c$. Moreover, $N(n,c)$ satisfies the same
property for formal power series over every algebraically closed
field $k$ of characteristic zero, as can be seen by taking an
uncountable extension of $k$.
\end{remark}

\begin{remark}
We deduce from Corollary~\ref{cor1} that in order to prove
Conjecture~\ref{conj_shokurov} for a given $n$ it is enough to show
that every ${\mathcal R}_n(c)$ is a cylinder, for $c\in\QQ$.
Furthermore, it follows from Corollary~\ref{cor_thm_main1} that in
order to prove Conjecture~\ref{conj_shokurov}, it
is enough to show that for every $n$ the set ${\mathcal R}_n(1)$ is a cylinder.
\end{remark}

\providecommand{\bysame}{\leavevmode \hbox \o3em
{\hrulefill}\thinspace}


\end{document}